\DeclareMathOperator{\ex}{ex}
\newtheorem{theorem}{Theorem}[section]
\newtheorem{cor}[theorem]{Corollary}
\newtheorem{lemma}[theorem]{Lemma}
\newtheorem{proposition}[theorem]{Proposition}
\newtheorem{claim}[theorem]{Claim}
\newtheorem{conjecture}[theorem]{Conjecture}
\newtheorem{definition}[theorem]{Definition}
\theoremstyle{definition}
\newtheorem{remark}[theorem]{Remark}
\newcommand\eps{\varepsilon}
\newcommand\norm[1]{\left\Vert #1 \right\Vert}
\newcommand\normp[2]{\left\Vert #2 \right\Vert_{#1}}
\begin{document}

\allowdisplaybreaks[2]

\newcommand{\carlosa}[1]{{\bf [~Carlos 30/04/16:\ } {\em #1}{\bf~]}}

\pagestyle{plain}
\thispagestyle{empty}
\footskip=30pt

\title[Edge-colorings of graphs avoiding complete graphs with a prescribed coloring]
  {Edge-colorings of graphs avoiding complete graphs with a prescribed coloring}

\author[F. S. Benevides]{Fabr\'{i}cio Siqueira Benevides}
\address{Departamento de Matemática, Centro de Ci\^encias, UFC --
  Campus do Pici, Bloco 914, 60451--760 Fortaleza, CE, Brazil} \email{\tt fabricio@mat.ufc.br}

\author[C. Hoppen]{Carlos Hoppen}
\address{Instituto de Matem\'{a}tica e Estat\'{i}stica, Universidade
 Federal do Rio Grande do Sul, Avenida Bento Gon\c{c}alves,
 9500, 91509-900, Porto Alegre, Brazil}
\email{choppen@ufrgs.br}

\author[R. M. Sampaio]{Rudini Menezes Sampaio}
\address{Departamento de Computa\c c\~ao, Centro de Ci\^encias, UFC --
  Campus do Pici, Bloco 910, 60451--760 Fortaleza, CE, Brazil} \email{\tt rudini@lia.ufc.br}

\thanks{The first and the thrid author were partially supported by FUNCAP and CNPq. The second author was partially supported by FAPERGS~(Proc.\,2233-2551/14-0) and CNPq (Proc.~448754/2014-2 and~308539/2015-0).}

\begin{abstract}
Given a graph $F$ and an integer $r \ge 2$, a partition $\widehat{F}$ of the edge set of $F$ into at most $r$ classes, and a graph $G$, define $c_{r, \widehat{F}}(G)$ as the number of $r$-colorings of the edges of $G$ that do not contain a copy of $F$ such that the edge partition induced by the coloring is isomorphic to the one of $F$. We think of $\widehat{F}$ as the pattern of coloring that should be avoided. The main question is, for a large enough $n$, to find the (extremal) graph $G$ on $n$ vertices which maximizes $c_{r, \widehat{F}}(G)$. This problem generalizes a question of Erd{\H o}s and Rothschild, who originally asked about the number of colorings not containing a monochromatic clique (which is equivalent to the case where $F$ is a clique and the partition $\widehat{F}$ contains a single class). We use H\"{o}lder's Inequality together with Zykov's Symmetrization to prove that, for any $r \geq 2$, $k \geq 3$ and any pattern $\widehat{K_k}$ of the clique $K_k$, there exists a complete multipartite graph that is extremal. Furthermore, if the pattern $\widehat{K_k}$ has at least two classes, with the possible exception of two very small patterns (on three or four vertices), every extremal graph must be a complete multipartite graph. In the case that $r=3$ and $\widehat{F}$ is a rainbow triangle (that is,  where $F=K_3$ and each part is a singleton), we show that an extremal graph must be an almost complete graph.  Still for $r=3$, we extend a result about monochromatic patterns of Alon, Balogh, Keevash and Sudakov to some patterns that use two of the three colors, finding the exact extremal graph. For the later two results, we use the Regularity and Stability Method.
\end{abstract}

\maketitle


\section{Introduction}

For any fixed graph $F$, we say that a graph $G$ is $F$-free if it does not contain $F$ as a subgraph. Finding the maximum number of edges among all $F$-free $n$-vertex graphs, and determining the class of $n$-vertex graphs that achieve this number is known as the \emph{Tur\'{a}n problem} associated with $F$, which was solved for complete graphs in~\cite{turan}. The maximum number of edges in an $F$-free $n$-vertex graph is denoted by $\ex(n,F)$ and the $n$-vertex graphs that achieve this bound are called \emph{$F$-extremal}. Tur\'{a}n has found the value of $\ex(n, F)$ for the case where $F$ is a clique $K_k$ on $k$ vertices, for any $k \geq 3$. Moreover, he showed that the $K_k$-free graph on $n$ vertices which has $\ex(n, K_k)$ edges is unique (up to isomorphism). This graph is a complete multipartite graph with $k-1$ parts of sizes as equal as possible, and we will denote it by $T_{k-1}(n)$. This problem and its many variants have been widely studied and there is a vast literature related with it. For more information, see Keevash~\cite{Kee11} and the references therein.

In connection with a question of Erd\H{o}s and Rothschild~\cite{Erd74}, several authors have investigated the following related problem. Instead of looking for $F$-free $n$-vertex graphs, they were interested in \emph{edge-colorings} of graphs on $n$ vertices such that \emph{every color class is $F$-free}. (We observe that edge colorings in this work are not necessarily proper colorings.) More precisely, given an integer $r \geq 1$ and a graph $F$ containing at least one edge, one considers the function $c_{r,F}(G)$ that associates, with the graph $G$, the number of $r$-colorings of the edge set of $G$ for which there is no monochromatic copy of $F$. Similarly as before, the problem consists of finding $c_{r,F}(n)$, the maximum of $c_{r,F}(G)$ over all $n$-vertex graphs $G$.

The function $c_{r,F}(n)$ has been studied for several classes of graphs, such as complete graphs~\cite{ABKS,PY12,yuster}, odd cycles~\cite{ABKS}, matchings~\cite{forbm}, paths and stars~\cite{paths}. The hypergraph analogue of this problem has also been considered, see for instance~\cite{kneser,independent,LP11,LPS11}, and there has been recent progress in the context of additive combinatorics~\cite{HJ16}. There is a straightforward connection between $c_{r,F} (n)$ and $\ex(n,F)$, namely
\begin{eqnarray} \label{ser1}
c_{r, F}(n) \geq r^{\ex(n,F)}  \textrm{ for every }n\geq 2,
\end{eqnarray}
as any $r$-coloring of the edges of an $F$-extremal $n$-vertex graph is trivially $F$-free, and there are precisely $r^{\ex(n,F)}$ such colorings. For $r \in \{2,3\}$ the inequality~\eqref{ser1} is actually an equation for several graph classes, such as complete graphs~\cite{ABKS,yuster}, odd cycles~\cite{ABKS} and matchings~\cite{forbm}. On the other hand, for $r \geq 4$ and all connected $F$, one may easily show that $c_{r,F}(n) > r^{\ex(n,F)}$ (see~\cite{ABKS} for non-bipartite graphs and~\cite[Proposition~3.4]{paths} for bipartite graphs).

Here we consider a natural generalization of the above, which was first studied by Lefmann and one of the current authors~\cite{HL15}. Given a $k$-vertex graph $\widehat{F}$ colored with at most $r$ colors, we consider the number of $r$-edge-colorings of a larger graph $G$ that avoids the \emph{color pattern} of $\widehat{F}$. Here, a pattern is defined as any partition of the edge set (of a graph $F$), and the pattern given by a coloring is simply the pattern induced by the color classes. Notice that in a pattern we ignore the name of the colors. We let $c_{r,\widehat{F}}(G)$ denote the number or $r$-colorings of $G$ which contain no $k$-vertex subgraph whose color pattern is isomorphic to the one of $\widehat{F}$. We say that a coloring that avoids the pattern of $\widehat{F}$ is $\widehat{F}$-free. When the context is clear we omit the subscripts in $c_{r,\widehat{F}}(G)$ and also refer to an $\widehat{F}_k$-free $r$-coloring simply as a \emph{good coloring}. Also, a graph $G$ on $n$ vertices is called $(r,\widehat{F})$-extremal (or simply extremal), when $c_{r,\hat{F}}(G) = c_{r,\hat{F}}(n)$.

We note that Balogh~\cite{balogh06} had also considered a multicolored variant of the original Erd\H{o}s-Rothschild problem. Given $\widehat{F}$ and $G$ as before, he considered the number $C_{r, \widehat{F}}(G)$ of $r$-colorings of $G$ which do not contain a copy of $F$ colored \emph{exactly} as $\widehat{F}$ (that is, in his version, we were not allowed to permute the colors). Observe that $c_{r,\widehat{F}}(G) \le C_{r, \widehat{F}}(G)$, but the notions of these two quantities are different. For example, consider the case where $\widehat{F}$ is a coloring of $F$ that uses only one of the $r$ colors, say ``blue''. In this case, $c_{r,\widehat{F}}(G)$ counts the number of colorings of $G$ that avoids monochromatic copies of $F$, agreeing with the previous definition of $c_{r,F}(G)$, while $C_{r, \widehat{F}}(G)$ is the number of colorings of $G$ which does not contain a blue copy $\widehat{F}$ (but may contain copies of $F$ in other colors). As another example, if one considers $r$-colorings of $G$, but the coloring of $\widehat{F}$ uses at most $r-1$ of the colors, then the complete graph $K_n$ is always extremal for $C_{r,\widehat{F}}(n)$, as the missing color may be used for any edge and hence may be used to extend colorings of any $n$-vertex graph $G$ to colorings of $K_n$. However, colorings may not always be extended in this way in the case where we want to avoid color patterns, that is, when we are searching for the extremal graphs of $c_{r,\widehat{F}}(n)$.

Balogh~\cite{balogh06} proved that in the case where $r=2$ and $\widehat{F}$ is a $2$-coloring of a clique that uses both colors then $C_{2,\widehat{F}}(n) = 2^{\ex(n,F)}$ for $n$ large enough, so the Tur\'{a}n graph $T_{k-1}(n)$ allows the maximum number of $2$-colorings with no colored copy of $F$.  However, the picture changes if we consider $3$-colorings with no rainbow triangles (pattern $R_0$ in Figure~\ref{fig:somepatters}): Balogh also observed that, if we color the complete graph $K_n$ with any two of the three colors available, there is no rainbow copy of $K_3$, which gives at least $3\cdot 2^{\binom{n}{2}} - 3 \gg 3^{\ex (n, K_3)}=3^{n^2/4 + o(n^2)}$ distinct colorings avoiding rainbow triangles. (As usual, we say that two positive functions $g,f$ satisfy $g(n) \ll f(n)$ if $\lim_{n \rightarrow \infty} g(n)/f(n)=0$.) 

 
In this paper, we focus on the case where $r \ge 3$ and the pattern is given by any edge-coloring of a \emph{clique} that is not monochromatic. The paper has two parts which use very different techniques. In the first part, corresponding to Section~\ref{sec:generalresults}, we shall use some ideas from the so called Zykov's symmetrization \cite{zykov1949} (which also yields one of the classical proofs of Tur\'{a}n's theorem), together with H\"{o}lder's Inequality for a certain vector space, to prove a general result that works for arbitrary patterns (including the monochromatic one). First we show the following:

\def\theoremthereIsExtremalMultipartite{Let $\widehat{F}_k$ be any $r$-coloring of $K_k$. For every natural $n$, there exists a complete multipartite graph on $n$ vertices which is $(r, \widehat{F}_k)$-extremal.}

\begin{theorem}\label{theorem:thereIsExtremalMultipartite}
\theoremthereIsExtremalMultipartite
\end{theorem}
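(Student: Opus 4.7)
The plan is to adapt Zykov's classical symmetrization to the setting of counting colorings, where the elementary degree comparison is replaced by the Cauchy--Schwarz case of Hölder's inequality.

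Fix an $n$-vertex graph $G$ and a non-adjacent pair $u,v\in V(G)$. Let $G_{v\to u}$ be the graph obtained from $G$ by redefining the neighborhood of $v$ to be $N_G(u)$ (so that $u$ and $v$ become twins in $G_{v\to u}$), and define $G_{u\to v}$ symmetrically. For each $r$-coloring $\psi$ of $E(G-\{u,v\})$ let $A(\psi)$ be the number of colorings of the edges at $u$ which, together with $\psi$, form a $\widehat{F}_k$-free coloring of $G-v$, and let $B(\psi)$ be defined analogously for $v$. Because $uv\notin E(G)$, no copy of $K_k$ in $G$, $G_{v\to u}$ or $G_{u\to v}$ contains both $u$ and $v$; consequently a coloring of any of these graphs is $\widehat{F}_k$-free iff its restrictions to $G-u$ and $G-v$ are both $\widehat{F}_k$-free. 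Partitioning the good colorings by $\psi$ yields
\[
c_{r,\widehat{F}_k}(G)=\sum_{\psi}A(\psi)B(\psi),\qquad c_{r,\widehat{F}_k}(G_{v\to u})=\sum_{\psi}A(\psi)^2,\qquad c_{r,\widehat{F}_k}(G_{u\to v})=\sum_{\psi}B(\psi)^2,
\]
and Cauchy--Schwarz gives $c_{r,\widehat{F}_k}(G)^2 \le c_{r,\widehat{F}_k}(G_{v\to u})\cdot c_{r,\widehat{F}_k}(G_{u\to v})$. In particular, if $G$ is extremal then both $G_{v\to u}$ and $G_{u\to v}$ are also extremal.

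To force the iterated symmetrization toward a complete multipartite graph, I would introduce the potential $T(H)=\sum_{t}\binom{n_t(H)}{2}$, where the $n_t(H)$ are the sizes of the twin classes of $H$ under the equivalence $x\sim y$ iff $N_H(x)=N_H(y)$. Every twin class is an independent set, so $H$ is complete multipartite exactly when non-adjacency coincides with the twin relation. Suppose an extremal graph $G^*$ is not complete multipartite; then some non-adjacent pair $u,v$ lies in distinct twin classes $V_i$ and $V_j$, of sizes $n_i,n_j$. A direct inspection shows that the operation $G^*\mapsto G^*_{v\to u}$ affects only $V_i$ and $V_j$ among the twin classes: every class $V_t$ with $t\ne i,j$ transforms uniformly (all its members share the same adjacency to both $u$ and $v$, since they have a common neighborhood in $G^*$) and remains a twin class, while $V_i$ becomes $V_i\cup\{v\}$ and $V_j$ becomes $V_j\setminus\{v\}$; any further merges among the new twin classes can only increase $T$. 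A short computation then gives
\[
T(G^*_{v\to u})\ge T(G^*)+(n_i-n_j+1),\qquad T(G^*_{u\to v})\ge T(G^*)+(n_j-n_i+1),
\]
whence $T(G^*_{v\to u})+T(G^*_{u\to v})\ge 2T(G^*)+2$; at least one of the two symmetrizations therefore produces an extremal graph on which $T$ is strictly larger.

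Iterating, the potential $T$ is bounded above by $\binom{n}{2}$ and strictly grows at each step, so the process terminates in finitely many steps at an extremal $n$-vertex graph in which every non-adjacent pair is a twin pair, i.e., at a complete multipartite extremal graph, proving Theorem~\ref{theorem:thereIsExtremalMultipartite}. The main obstacle is the twin-class bookkeeping supporting the strict-increase inequality for $T$: one must verify carefully that each $V_t$ with $t\ne i,j$ remains intact under the symmetrization (which ultimately reduces to the fact that $V_t$ being a twin class forces all of its vertices to have the same adjacency to $u$ and to $v$) and confirm that potential merges between the transformed classes contribute only positively to $T$, so that the key step of the symmetrization makes strict progress.
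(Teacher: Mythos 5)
Your proof is correct, and it reaches the theorem by a route that differs from the paper's in two respects. The paper's engine is Lemma~\ref{lemma:replaceIndSet}: it fixes an entire independent set $S$ with $s=|S|$, applies H\"older's inequality with exponent $s$ to the vectors $\vec{u}_H$ ($u\in S$), and replaces all of $S$ by clones of the vertex $v\in S$ maximizing $\normp{s}{\vec{v}_H}$; iterating (via Corollary~\ref{corolary:cloning}), it clones a vertex $v_i$ having a neighbor in the current ``remainder'' $R_i$ into the non-neighborhood of $v_i$, so $R_{i+1}\subsetneq R_i$ and the process terminates in at most $n$ rounds. You work instead with a single non-adjacent pair $\{u,v\}$, so only the Cauchy--Schwarz (two-vector) case of H\"older is needed, and you obtain termination from the twin-class potential $T(H)=\sum_t\binom{n_t(H)}{2}$ via $T(G^*_{v\to u})+T(G^*_{u\to v})\ge 2T(G^*)+2$, so at least one of the two symmetrizations is extremal with strictly larger $T$. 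This is closer to the textbook Zykov symmetrization and the termination argument is pleasantly local; the paper's multi-vertex H\"older lemma is heavier than needed for this existence statement alone, but the paper reuses it, together with its equality conditions, to prove the stronger Theorem~\ref{thm:nonmonoExtremalsAreMultipartite} that \emph{every} extremal graph is complete multipartite, which your pairwise inequality by itself does not directly yield. Two details worth writing out in a final version: the identity $c_{r,\widehat{F}_k}(G_{v\to u})=\sum_\psi A(\psi)^2$ uses that $G_{v\to u}-u$ is isomorphic to $G-v$ via $v\mapsto u$, which relies on $v\notin N_G(u)$; and the claim that $V_i\cup\{v\}$ and $V_j\setminus\{v\}$ are (parts of, allowing merges) twin classes in the new graph again uses $u\not\sim v$ to check that every vertex of $V_i$ retains neighborhood exactly $N_{G^*}(u)$ after the symmetrization, and every vertex of $V_j\setminus\{v\}$ retains neighborhood exactly $N_{G^*}(v)$.
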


Very recently, Pikhurko, Staden and Yilma \cite{PSY16} have obtained a similar result, albeit for a different extension of the original problem about monochromatic patterns (their forbidden patterns are still only monochromatic cliques, but they forbid cliques of different sizes for different colors).

In addition, we also proved that whenever the pattern is non-monochromatic and is different than two  particular small patterns, then \emph{every} extremal graph is a complete multipartite one.
\def\theoremExtremalstatement{Let $r \ge 2$ and $k \ge 3$ be given and let $\widehat{F}_k$ be a $r$-coloring of $K_k$ which is not monochromatic and is different from the pattern $T_0$. Also assume that if $r=2$ then $\widehat{F}_k$ is different from the pattern $P_2$ (see Figure~\ref{fig:somepatters}). Then \emph{every} $(r,\widehat{F}_k)$-extremal graph is a complete multipartite graph.}

\begin{theorem} \label{thm:nonmonoExtremalsAreMultipartite}
	\theoremExtremalstatement
\end{theorem}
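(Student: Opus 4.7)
My plan is to apply a Zykov-style symmetrization strengthened via Cauchy--Schwarz (a special case of Hölder's inequality), showing that if $G$ is $(r,\widehat{F}_k)$-extremal but not complete multipartite, then one can produce a graph with strictly more good colorings, contradicting extremality.

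Assume $G$ is extremal and not complete multipartite. Then non-adjacency on $V(G)$ is not transitive, so we may choose $u, v, w \in V(G)$ with $uw \in E(G)$ but $uv, vw \notin E(G)$. For each pair of non-adjacent vertices $x, y$ and each coloring $\phi$ of $G[V(G) \setminus \{x,y\}]$, let $a_x(\phi)$ (respectively, $a_y(\phi)$) be the number of colorings of the edges incident with $x$ (respectively $y$) that extend $\phi$ to a good coloring of $G - y$ (respectively $G - x$). Since $xy \notin E(G)$, no copy of $K_k$ in $G$ contains both $x$ and $y$, so the constraints ``no bad $K_k$ through $x$'' and ``no bad $K_k$ through $y$'' act on disjoint sets of edges. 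Therefore the good extensions of $\phi$ to $G$ factor as $a_x(\phi)\,a_y(\phi)$, and
\[
c(G) \;=\; \sum_\phi a_x(\phi)\,a_y(\phi).
\]
Let $G^{x\to y}$ be the graph obtained from $G$ by deleting $x$ and inserting a new vertex $y'$ non-adjacent to $y$ with $N_G(y)$ as its neighborhood; let $G^{y\to x}$ be defined symmetrically. The same factoring gives $c(G^{x\to y}) = \sum_\phi a_y(\phi)^2$ and $c(G^{y\to x}) = \sum_\phi a_x(\phi)^2$, and Cauchy--Schwarz yields
\[
c(G)^2 \;\le\; c(G^{x\to y})\,c(G^{y\to x}),
\]
with equality if and only if $a_x(\phi)/a_y(\phi)$ is constant over all $\phi$ with $a_y(\phi)>0$.

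Taking $(x,y)=(u,v)$ and observing that $G^{u\to v}$ and $G^{v\to u}$ are both $n$-vertex graphs, the extremality of $G$ forces equality throughout, producing a constant $\lambda$ with $a_u(\phi)=\lambda\, a_v(\phi)$ for every $\phi$. The final and main step is to contradict this constancy by invoking the asymmetry vertex $w \in N(u) \setminus N(v)$ together with the assumption that $\widehat{F}_k$ uses at least two colors. The idea is to construct two backgrounds $\phi_1, \phi_2$ yielding different ratios: one colors the common neighborhood $N(u) \cap N(v)$ uniformly and arranges the coloring near $w$ (using a second color available in $\widehat{F}_k$) so that completing a good coloring through $u$ is tightly restricted while completing one through $v$ is not, and $\phi_2$ reverses the imbalance. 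The main obstacle is executing this construction in full generality: it requires a structural case analysis on the ``vertex/edge types'' of $\widehat{F}_k$ and a lower bound on $n$ ensuring enough vertices outside $N(u)\triangle N(v)$ to realize the desired configurations. It is precisely here that the small patterns $T_0$ (a non-monochromatic $2$-coloring of $K_3$) and, when $r=2$, $P_2$ (a specific coloring of $K_4$) must be excluded, their rigidity being exactly what prevents the distinguishing backgrounds from existing.
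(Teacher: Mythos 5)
Your setup---symmetrizing via Cauchy--Schwarz on the non-adjacent pair $\{u,v\}$ to conclude $a_u(\phi)=a_v(\phi)$ for every background $\phi$---is essentially the same as the paper's Corollary~\ref{cor:ColorVectorsAreEqual}, and your identification of the asymmetric vertex $w$ as the source of the eventual contradiction is exactly right. The gap is that the entire content of the theorem lives in the step you leave as a sketch. You write that ``the main obstacle is executing this construction in full generality'' and gesture at ``two backgrounds $\phi_1,\phi_2$'' without constructing them; this is precisely the nontrivial work, and without it you have not proved anything beyond the preliminary Corollary~\ref{cor:ColorVectorsAreEqual}, which holds for monochromatic patterns too. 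The paper carries this out in detail: it introduces the notion of an \emph{almost monochromatic} pattern, fixes a concrete background (all-blue on $H$, or blue on $H$ and red on the $w$-to-$H$ edges), and exhibits an explicit injection $\phi\colon\mathcal{H}(v)\to\mathcal{H}^w(v)$ (extend by giving $vw$ one fixed color) whose well-definedness and non-surjectivity are checked case by case: Case~1 for patterns that are neither monochromatic nor almost monochromatic, Case~2 for almost monochromatic patterns with $k\ge 4$ other than $P_1,P_2,P_3$, and Cases~3--4 for $P_1,P_3$ and $P_2$ respectively. The excluded patterns $T_0$ and (for $r=2$) $P_2$ are not excluded because of some vague ``rigidity''; $T_0$ is excluded because Case~2's construction requires $k\ge 4$ and $T_0$ is the only almost monochromatic pattern on $K_3$, while Case~4 for $P_2$ needs a genuine third color. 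Your guess that a lower bound on $n$ is required is also not borne out: the paper's construction works for all $n$, with no asymptotic hypothesis.

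One further structural point. The paper does not rely only on Cauchy--Schwarz: the edge-deletion lemma (Lemma~\ref{lemma:containsmultipartite}) uses a \emph{three}-term H\"older inequality on $u,v,w$ to obtain $\vec{u}_H=\vec{v}_H=\vec{w}_H$, which yields $c(u,\widehat{H})=c(v,\widehat{H})$. Chaining this with $c(u,\widehat{H^w})=c(u,\widehat{H})$ (from $u\not\sim w$) and $c(u,\widehat{H^w})=c(v,\widehat{H^w})$ (Cauchy--Schwarz) reduces the contradiction to the statement $c(v,\widehat{H^w})\ne c(v,\widehat{H})$, i.e.\ a comparison for the \emph{same} vertex $v$ against two backgrounds; this is what makes the ``append one color to $vw$'' injection natural. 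If you work only with Cauchy--Schwarz on $\{u,v\}$, you must instead compare $c(v,\widehat{H^w})$ to the $\widehat{H^w}$-independent quantity $c(u,\widehat{H})$, which forces you to exhibit two extensions of the same $\widehat{H}$ with different $c(v,\cdot)$-values; this is doable but you still have to do it, and you haven't. To complete the proof you need to write down the backgrounds and the distinguishing copies of $K_k$ explicitly, and it is exactly that case analysis which is missing.
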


\begin{figure}[thb]
\begin{center}
	\begin{tikzpicture}[scale=1.6]
	\pgfsetlinewidth{1pt}

	\tikzset{vertex/.style={circle, fill=black!50, draw}}

	\begin{scope}[xshift=-4cm, yshift=0]
		\node [vertex] (a) at (0,0) {};
		\node [vertex] (b) at (0,1) {};
		\node [vertex] (c) at (1,1) {};

		\draw (a) -- (b) -- (c);
		\draw[dashed, thin] (a) -- (c);

		\draw[black] (0.5,-0.2) node [below] {$T_0$};
	 \end{scope}

	\begin{scope}[xshift=-2cm, yshift=0]
		\node [vertex] (a) at (0,0) {};
		\node [vertex] (b) at (0,1) {};
		\node [vertex] (c) at (1,1) {};

		\draw[dotted, line width=1mm] (a) -- (b);
		\draw (b) -- (c);
		\draw[dashed, thin] (a) -- (c);

		\draw[black] (0.5,-0.2) node [below] {$R_0$};
	 \end{scope}

	\begin{scope}[xshift=0, yshift=0]
		\node [vertex] (a) at (0,0) {};
		\node [vertex] (b) at (0,1) {};
		\node [vertex] (c) at (1,1) {};
		\node [vertex] (d) at (1,0) {};

		\draw (b) -- (c) -- (d) -- (b);
		\draw (a) -- (b);
		\draw (a) -- (c);
		\draw[dashed, thin] (a) -- (d);

		\draw[black] (0.5,-0.2) node [below] {$P_1$};
	 \end{scope}
	 
	 \begin{scope}[xshift=2cm, yshift=0]
		\node [vertex] (a) at (0,0) {};
		\node [vertex] (b) at (0,1) {};
		\node [vertex] (c) at (1,1) {};
		\node [vertex] (d) at (1,0) {};
		\draw (a) -- (c);
		\draw [dashed, thin] (a) -- (b);
		\draw (b) -- (c) -- (d) -- (b);

		\draw[dashed, thin] (a) -- (d);
		\draw[black] (0.5,-0.2) node [below] {$P_2$};
	 \end{scope}

	 \begin{scope}[xshift=4cm, yshift=0]
		\node [vertex] (a) at (0,0) {};
		\node [vertex] (b) at (0,1) {};
		\node [vertex] (c) at (1,1) {};
		\node [vertex] (d) at (1,0) {};

		\draw (b) -- (c) -- (d) -- (b);
		\draw[dotted, line width=1mm] (a) -- (b);
		\draw (a) -- (c);
		\draw[dashed, thin] (a) -- (d);
		\draw[black] (0.5,-0.2) node [below] {$P_3$};
	 \end{scope}
	\end{tikzpicture}
\caption{Some special patterns of colorings: $T_0$, $P_1$, $P_2$, use two colors, and $R_0$ and $P_3$ use three colors.}
\label{fig:somepatters}
\end{center}
\end{figure}
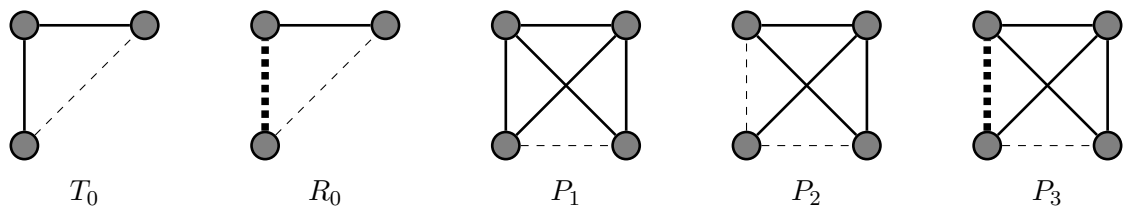

We remark that when $r=2$, the previously mentioned results~\cite{ABKS,balogh06,yuster} for $C_{2,\widehat{F}_k}(n)$ already imply that $c_{2,\widehat{F}_k}(n)=2^{\ex(n,K_k)}$ for every $k\ge 3$ and every $2$-coloring $\widehat{F}_k$ of the complete graph $K_k$. In particular, in the case where $\widehat{F}_k = P_2$ and $r=2$, our proof of Theorem~\ref{thm:nonmonoExtremalsAreMultipartite} does not work, but we already know the exact optimum. Furthermore, if  $r=3$ and $\widehat{F}_3$ is the pattern $T_0$ in Figure~\ref{fig:somepatters}, then our main theorem in Section~\ref{sec:twocolors} implies that the (only) extremal graph is the Tur\'{a}n graph. We believe that the conclusion in Theorem~\ref{thm:nonmonoExtremalsAreMultipartite} actually works for any pattern given by a coloring of a clique.

A strong implication of Theorem~\ref{thm:nonmonoExtremalsAreMultipartite} is that, in order to find any extremal graph of the pattern in the statement, we only have to find the number of vertices in each class that maximizes the number of colorings. We believe that if the pattern has some symmetry, then the number of vertices in each class must be the same. However, we have no indication that this must be true for all patterns. As a matter of fact, if we do not require the forbidden graph to be complete, there are instances where the extremal graph is complete multipartite, but the classes are not equitable, see~\cite{HL15,forbm} in the case of matchings, or where the extremal graph is not even complete multipartite, see~\cite{paths}. 

The exact extremal graph is known only for a very small values of $r$ and very particular patterns. In most cases, when we do know the exact extremal graph for $c_{r, \widehat{F}}(n)$, it happens that we have equality in \eqref{ser1} and the extremal is the graph on $n$ vertices and $\ex(n,F)$ edges. Pikhurko and Yilma \cite{PY12} have found the exact extremal graph for two cases where we do not have equality in \eqref{ser1}: when $r=4$ and $\widehat{F}$ is either a monochromatic $K_3$ or $K_4$. Of course, such extremal graphs for $K_t$, for both $t=3$ and $t=4$, are still complete multipartite graphs (in fact, they are also Tur\'{a}n graphs, but different from $T_{t-1}(n)$), as our result in Theorem~\ref{thm:nonmonoExtremalsAreMultipartite}.

In the second part of the paper, we focus on the case where $r=3$ and try to get more precise results for a specific family of patterns. We extend to multicolored patterns the method of Alon, Balogh, Keevash and Sudakov~\cite{ABKS} (see also~\cite{balogh06}), which uses Szemer\'{e}di's Regularity Lemma. Our original motivation was only to look at the pattern $T_0$ (the two-colored triangle) and $R_0$ (the rainbow triangle) in Figure~\ref{fig:somepatters}. We conjecture that the extremal graph for $R_0$ is the complete graph (when $r=3$). In Section~\ref{sec:rainbow}, our main result (Theorem~\ref{thm:approx_rainbow}) is an approximate version of this which says that an extremal is an almost complete graph (in two different ways). Finally, in Section~\ref{sec:twocolors}, we prove that for any pattern $\widehat{F}_k$ (generated by a coloring of $K_k$) that satisfies a certain stability condition the (only) extremal graph for $c_{r, \widehat{F}_k}(n)$ is the Tur\'{a}n graph $T_{k-1}(n)$, for each $n$ large enough. Afterwards, we show that such stability is satisfied by patterns that uses only two colors and one of which induces a graph of small Ramsey number. This includes the pattern $R_0$. Together, they add up to the following main theorem.

\def\theoremTwocolors{Let $k \geq 3$ and let $\widehat{F}$ be a pattern of $K_k$ with two classes, one of which induces a graph $J$ such that $R(J,J) \leq k$. Then for $n$ sufficiently large the number of $3$-edge colorings that avoid $\widehat{F}$ is maximized for the Tur\'{a}n graph $T_{k-1}(n)$.}

\begin{theorem} \label{thm_patterns}
	\theoremTwocolors
\end{theorem}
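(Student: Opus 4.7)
My plan is to combine the abstract stability-based exact result established earlier in Section~\ref{sec:twocolors} with a Regularity-method verification of its stability hypothesis for two-class patterns $(J, \bar J)$ satisfying $R(J, J) \le k$. The lower bound $c_{3, \widehat{F}}(T_{k-1}(n)) \ge 3^{\ex(n, K_k)}$ is immediate, since $T_{k-1}(n)$ is $K_k$-free, so every $3$-coloring of its edges vacuously avoids the pattern $\widehat{F}$. For the matching upper bound, I would invoke the abstract theorem announced in the introduction of Section~\ref{sec:twocolors}, which says that $T_{k-1}(n)$ is the unique $(3, \widehat{F}_k)$-extremal graph for $n$ large whenever the pattern $\widehat{F}_k$ satisfies a suitable stability condition. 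Consequently, the proof of Theorem~\ref{thm_patterns} reduces to verifying stability for the patterns described.

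To verify stability I would apply the colored version of Szemer\'edi's Regularity Lemma to an arbitrary $\widehat{F}$-free $3$-coloring of a near-extremal graph $G$, obtaining an equitable partition $V_1 \cup \cdots \cup V_t$ and attaching to each cluster-pair $(V_i, V_j)$ the list $\chi_{ij} \subseteq \{1, 2, 3\}$ of colors of bipartite density at least $\delta$. For each pair $\{a, b\} \subseteq \{1, 2, 3\}$, let $H_{a, b}$ denote the auxiliary cluster-graph whose edges are those cluster-pairs with $\{a, b\} \subseteq \chi_{ij}$. A direct application of the Counting Lemma shows that each $H_{a, b}$ is $K_k$-free: on any $K_k$ inside $H_{a, b}$, one can prescribe the $2$-coloring with colors $\{a, b\}$ whose partition is isomorphic to $(J, \bar J)$, and the Counting Lemma transplants this prescription into $G$, producing a copy of the pattern $\widehat{F}$ and contradicting $\widehat{F}$-freeness. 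The Ramsey hypothesis $R(J, J) \le k$ is then used, in combination with these three $K_k$-freeness statements and a case analysis on the lists $\chi_{ij}$, to conclude that the entire cluster graph $H = H^1 \cup H^2 \cup H^3$ (where $H^a$ collects the cluster-pairs with $a \in \chi_{ij}$) is also $K_k$-free: on any $K_k \subseteq H$ one selects one present color per cluster-edge, forming a $3$-coloring of the cluster $K_k$, and then uses Ramsey together with the freedom of choice inside each list to produce a $2$-subcoloring that realises the partition $(J, \bar J)$ and hence a copy of $\widehat{F}$ in $G$. The usual stability version of Tur\'an's theorem then gives that $G$ is $o(n^2)$-close to $T_{k-1}(n)$, which is precisely the input required to activate the machinery of Section~\ref{sec:twocolors}.

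The central technical obstacle is the case analysis in this last step: turning Ramsey's guarantee of a monochromatic copy of $J$ in any $2$-coloring of $K_k$ into a $2$-subcoloring of the cluster-$K_k$ whose color classes are \emph{exactly} isomorphic to $(J, \bar J)$, while respecting the list restrictions $\chi_{ij}$. This should be possible by exploiting the freedom of the third color (which can absorb inconvenient cluster-edges) and the specific combinatorial structure of the partition $(J, \bar J)$; one must handle separately the configurations where many lists $\chi_{ij}$ are singletons and those where most lists have size $\ge 2$. Once the stability conclusion is obtained, the machinery of Section~\ref{sec:twocolors} automatically upgrades it to the exact extremality $G = T_{k-1}(n)$ for $n$ sufficiently large, completing the proof.
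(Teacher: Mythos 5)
Your overall two-step plan (establish a stability property, then invoke an abstract lemma that upgrades stability to exact extremality of $T_{k-1}(n)$) matches the paper's decomposition into Lemma~\ref{lemma_patterns} and Lemma~\ref{lemma_final}, and the opening observation that $T_{k-1}(n)$ trivially gives $3^{\ex(n,K_k)}$ good colorings is correct. The gap is in the central claim of your stability verification, namely that the entire multicolored cluster graph $H=H^1\cup H^2\cup H^3$ is $K_k$-free; this is simply false, and the ``freedom of the third color'' you hope will rescue the argument does not exist when lists are singletons. Take $k=3$ and $\widehat F=T_0$ (so $J$ is an edge or a two-edge path, and in either case $R(J,J)\le 3$). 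If three clusters carry the singleton lists $\chi_{ij}=\{1\}$, $\chi_{j\ell}=\{2\}$, $\chi_{i\ell}=\{3\}$, the corresponding triangle lies in $H$, yet no choice of a present color per cluster-edge can produce the pattern $T_0$, which requires two edges of the same color. This configuration does arise from $T_0$-free colorings (color each cluster-pair entirely in its single listed color), so $H$ is not $K_k$-free and the stability theorem cannot be applied to it. Your preliminary observation that each $H_{a,b}$ is $K_k$-free is sound, but the passage to $K_k$-freeness of $H$ is not available.

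What the paper actually proves $K_k$-free is the partial cluster graph $R_j':=R_j-E_1$, where $E_1$ is the set of cluster-edges present in \emph{exactly one} color class. The Ramsey hypothesis is used precisely here: on a $K_k$ in $R_j'$, every edge carries color $j$ and at least one other colour; assigning each edge one of its non-$j$ colors gives a $2$-coloring of $K_k$, so by $R(J,J)\le k$ there is a monochromatic copy of $J$ in some second color, which together with $j$ on the remaining edges realises $\widehat F$ and contradicts $\widehat F$-freeness via the embedding lemma. (Note $R_j'=\bigcup_{b\neq j}H_{j,b}$, so this is strictly stronger than $K_k$-freeness of each $H_{j,b}$, and the Ramsey input is essential for the union.) This yields $2e_2+3e_3=\sum_j e(R_j')\le 3\,t_{k-1}(t)$, which is what bounds the exponential contribution $(1^{e_1}2^{e_2}3^{e_3})^{n^2/t^2}$ to the number of colorings. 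The stability theorem is then applied not to $H$ but to the graph on $E_3$ (cluster-edges present in all three colors), which is $K_k$-free since $E_3\subseteq R_j'$, and whose edge count is forced to be close to $t_{k-1}(t)$ by combining the assumption $c_{3,\widehat F}(G)\ge 3^{\ex(n,K_k)}$ with the bound above. Finally, a further counting argument --- on copies of $K_{k-1}$ in the cluster graph and on the vertex-degrees in the associated auxiliary bipartite graph --- is needed to transfer the resulting $(k-1)$-partite structure from the cluster level back to a partition of $V(G)$ with few internal edges, a step your outline does not address.
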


Recently, there has also been progress in finding graphs that admit the largest number of $r$-colorings avoiding some pattern of a complete graph for $r \geq 4$ colors. Typically, the results obtained focus on rainbow patterns of $K_k$, that is, patterns where all edges are assigned different colors, and show that the Tur\'{a}n graph $T_{k-1}(n)$ is optimal for large $n$ as long as $r \geq r_0(k)$. For instance, in the case $k=3$, this is known for $r_0=5$ (see~\cite{HLO15}). In~\cite{HLO15}, the authors also extend the general method of~\cite{ABKS} to multicolored patterns, but the results in the first part allow us to shorten it slightly.


\section{Results that hold for every coloring pattern $\widehat{F}_k$ of a clique $K_k$.}
\label{sec:generalresults}

For this section, $r \ge 2$ and $k \ge 3$ are natural numbers and $\widehat{F}_k$ is any $r$-coloring of a complete graph $K_k$.

For a vector $\vec{x}$ indexed by a set $T$, we will denote by $x(t)$ the value of $x$ at coordinate $t$, where $t\in T$. We will use $\normp{p}{\vec{x}}$ to denote the $\ell_p$-norm of $\vec{x}$, so for $p\in (0, \infty)$ we have 
$$\normp{p}{x} = \left(\sum_{t\in T} |x(t)|^p\right)^{1/p}.$$
Moreover, for a sequence of vectors $x_1, \ldots, x_s$, each indexed by $T$, we will denote their pointwise product by $\prod_{k=1}^{n} \vec{x}_k$, that is, the vector $y$ such that for each $t\in T$ we have $y(t) = \prod_{k=1}^{n}x_k(t)$.

\begin{definition} \label{def:profilevector} If $H$ is a subgraph of a graph $G$ and $\widehat{H}$ is an $\widehat{F}_k$-free $r$-coloring  of $H$, we denote by $c_{r,\widehat{F}_k}(G \mid \widehat{H})$ the number of ways to $r$-color the edges in $E(G) - E(H)$ in such a way that the resulting coloring is still $\widehat{F}_k$-free. For a single vertex $v \in V(G)-V(H)$, we use the notation $c_{r,\widehat{F}_k}(v,\widehat{H})$ for the number of ways to $r$-color the edges from $v$ to $V(H)$ (again avoiding $\widehat{F}_k$). We also define $\vec{v}_H$ as the vector indexed by the $\widehat{F}_k$-free $r$-colorings of $H$, whose coordinate corresponding to a coloring $\widehat{H}$ is given by $\vec{v}_H(\widehat{H}) = c(v,\widehat{H})$.
\end{definition}

We have the following immediate proposition.

\begin{proposition}\label{prop:indepset}
If $H$ is an induced subgraph of $G$ such that $S = V(G)-V(H)$ is an independent set in $G$, and $\widehat{H}$ is an $(r, {F_k})$-free coloring of $H$, then
\[
 	c(G\mid \widehat{H}) = \prod_{v\in S} c(v,\widehat{H}).
 \] 
\end{proposition}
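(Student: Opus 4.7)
The plan is to exploit the independence of $S$: since $S$ contains no edges, $E(G) - E(H)$ decomposes as a disjoint union of stars $E_v = \{vu : u \in V(H)\}$, one per vertex $v \in S$, and these stars are pairwise edge-disjoint. First I would observe that any extension of the fixed coloring $\widehat{H}$ to an $r$-coloring of $G$ is uniquely determined by an independent choice of $r$-coloring of each star $E_v$, so the total number of extensions factors trivially; the content of the proposition is that the $\widehat{F}_k$-free extensions factor as well.

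Next I would argue that the $\widehat{F}_k$-freeness condition on such an extension decouples across the vertices of $S$. Since $k \geq 3$, any copy of $K_k$ in $G$ contains at most one vertex of $S$: otherwise two such vertices would be joined by an edge, contradicting independence. Hence any bad copy of $\widehat{F}_k$ in the extended coloring either sits entirely inside $H$ (ruled out by hypothesis, as $\widehat{H}$ is $\widehat{F}_k$-free) or consists of one vertex $v \in S$ together with $k-1$ vertices of $V(H)$; in the latter case all of its non-$H$ edges lie in the single star $E_v$. Thus the global $\widehat{F}_k$-freeness condition splits into independent local conditions, one per $v \in S$, each depending only on the coloring of $E_v$ (together with the already-fixed $\widehat{H}$).

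Finally, by definition $c(v,\widehat{H})$ is exactly the number of $r$-colorings of $E_v$ that, together with $\widehat{H}$, create no bad copy of $\widehat{F}_k$ on $\{v\}\cup V(H)$. Multiplicativity over the independent choices across $v \in S$ then yields $c(G \mid \widehat{H}) = \prod_{v \in S} c(v,\widehat{H})$. There is essentially no obstacle here; the entire content is in combining the independence of $S$ with the fact that a $K_k$ ($k \geq 3$) cannot contain a non-adjacent pair of vertices.
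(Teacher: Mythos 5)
Your argument is correct and is essentially the same as the paper's (much terser) proof: both rest on the observation that, since $S$ is independent, any copy of $K_k$ meets $S$ in at most one vertex, so the $\widehat{F}_k$-freeness constraint on the extension decouples across the stars $E_v$ for $v \in S$, and the count factors. The paper states this in one sentence; you have simply spelled out the decomposition and the case analysis more explicitly.
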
 
\begin{proof}
It follows trivially from the fact that there is no $K_k$ that contains two vertices of $S$ and therefore the choice of colors of the edges incident to a vertex of $S$ does not affect the colors of edges incident to other vertices of $S$.
\end{proof}

We will need the inequality below, known as the Generalized H\"{o}lder's Inequality (stated here for the particular case of the counting measure on a finite set). For a more general version see the book \cite{wheeden2015measure} (chapter 8, exercise 6).

\begin{lemma}[H\"{o}lder's Inequality] \label{lemma:holderfull}
Assume that $r \in (0, \infty)$ and $p_1, p_2, \ldots, p_s \in (0, \infty]$ are such that $$\sum_{k=1}^{n} \frac{1}{p_k} = \frac{1}{r},$$ and let $\vec{x_1}, \ldots, \vec{x_s}$ be complex-valued vectors indexed by a common set $T$. We have
$$\normp{r}{\prod_{k=1}^{s} \vec{x}_k} \le \prod_{k=1}^{s}\normp{p_k}{\vec{x}_k}.$$
Furthermore, equality happens above if and only if for every $i, j \in [s]$ and every $t\in T$ we have $$\frac{|\vec{x}_i(t)|^{p_i}}{\normp{p_i}{\vec{x}_i}}= \frac{|\vec{x}_j(t)|^{p_j}}{\normp{p_j}{\vec{x}_j}}.$$
\end{lemma}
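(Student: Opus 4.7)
The plan is to prove the inequality by induction on the number $s$ of vectors, using the classical two-factor H\"{o}lder inequality as the engine. The key observation that makes the jump from the familiar case of exponents summing to $1$ to exponents summing to $1/r$ painless is a simple rescaling trick: if $p,q,r \in (0,\infty)$ satisfy $1/p + 1/q = 1/r$, then $p/r$ and $q/r$ are conjugate exponents in the usual sense ($r/p + r/q = 1$), so applying the standard H\"{o}lder inequality to the vectors with entries $|\vec{x}_1(t)|^r$ and $|\vec{x}_2(t)|^r$ and taking the $r$-th root yields the two-factor generalized version $\normp{r}{\vec{x}_1 \vec{x}_2} \le \normp{p}{\vec{x}_1}\normp{q}{\vec{x}_2}$.

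First I would dispatch the base case $s=2$ exactly as above, handling the possibility that $p=\infty$ or $q=\infty$ separately via the pointwise bound $|\vec{x}_1(t)\vec{x}_2(t)| \le \normp{\infty}{\vec{x}_1} \cdot |\vec{x}_2(t)|$ combined with summation. Second, for the inductive step, I would define $r' \in (0,\infty]$ by $1/r' = \sum_{k=2}^s 1/p_k$, so that $1/p_1 + 1/r' = 1/r$. Applying the two-factor version to $\vec{x}_1$ and the pointwise product $\prod_{k=2}^s \vec{x}_k$ with exponents $p_1$ and $r'$, then invoking the induction hypothesis on $\vec{x}_2,\ldots,\vec{x}_s$ with exponents $p_2,\ldots,p_s$ and target $r'$, yields the full inequality in one line.

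For the equality case, I would track the equality condition at each use of the two-factor inequality. Equality in the standard form holds iff $|\vec{x}_1(t)|^{p/r}$ and $|\vec{x}_2(t)|^{q/r}$ are proportional as functions of $t$; normalizing by the appropriate norms gives exactly the condition in the statement for $s=2$. The inductive step then forces equality both at the split $\vec{x}_1$ versus $\prod_{k \ge 2}\vec{x}_k$ and inside the inner inequality, and a short algebraic check shows these conditions chain together into the uniform pointwise equality claimed. The main obstacle I expect is the bookkeeping for degenerate cases, namely some $\vec{x}_k \equiv 0$ (where both sides vanish and the equality condition becomes vacuous), some $p_k = \infty$ (so that the auxiliary $r'$ may itself be $\infty$), and checking that normalization by $\normp{p_k}{\vec{x}_k}$ remains well-defined; I would handle these as explicit side cases before running the main induction.
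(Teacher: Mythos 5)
Your proof is correct and follows the standard route, but note that the paper does not actually prove this lemma: it is cited as a known fact from Wheeden's textbook (Chapter 8, Exercise 6) and used as a black box. So there is no ``paper's proof'' to compare against; the rescaling trick (observing that if $1/p + 1/q = 1/r$ then $p/r$ and $q/r$ are conjugate in the classical sense, then applying two-factor H\"{o}lder to the vectors $|\vec{x}_1|^r$ and $|\vec{x}_2|^r$ and taking $r$-th roots) followed by induction on $s$ is precisely the textbook argument one would find there, and your treatment of the base case, the inductive split via $1/r' = \sum_{k\ge 2} 1/p_k$, and the degenerate $p_k = \infty$ cases is all sound.

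One thing to be careful about in the equality clause: as printed, the lemma's condition has $\normp{p_i}{\vec{x}_i}$ to the first power in the denominator, but the condition that actually drops out of your normalization (and the one that is equivalent to proportionality of $|\vec{x}_i(t)|^{p_i}$ and $|\vec{x}_j(t)|^{p_j}$ as functions of $t$) is
$\frac{|\vec{x}_i(t)|^{p_i}}{\normp{p_i}{\vec{x}_i}^{p_i}} = \frac{|\vec{x}_j(t)|^{p_j}}{\normp{p_j}{\vec{x}_j}^{p_j}}$,
i.e.\ with the $p_i$-th power of the norm in the denominator. This appears to be a typo in the paper's statement rather than an error in your plan; you should simply state the corrected condition rather than asserting you recover the printed one verbatim. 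It is worth noting that this discrepancy is harmless downstream, since in the paper the equality case is only invoked to conclude $\vec{u}_H = \vec{v}_H$ when all entries are positive and the two relevant norms are already known to coincide, and either form of the condition gives that conclusion.
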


We will actually use it only in the following particular form.

\begin{cor}\label{cor:holder}
Let $\vec{x}_1, \ldots, \vec{x}_s$ be complex-valued vectors indexed by the same set. We have
$$\normp{1}{\prod_{k=1}^{s} \vec{x}_k} \le \prod_{k=1}^{s}\normp{s}{\vec{x}_k}.$$
\end{cor}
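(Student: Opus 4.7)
The plan is to derive this as an immediate specialization of Lemma~\ref{lemma:holderfull}. I would set the parameter $r$ in the generalized H\"{o}lder's inequality equal to $1$ and take all exponents $p_1 = p_2 = \cdots = p_s$ equal to $s$. The hypothesis of the lemma then reads
$$\sum_{k=1}^{s} \frac{1}{p_k} = \sum_{k=1}^{s} \frac{1}{s} = 1 = \frac{1}{r},$$
so it is satisfied, and the conclusion of Lemma~\ref{lemma:holderfull} specializes to precisely the inequality in the corollary.

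There is essentially no obstacle: the only thing to check is that the chosen exponents are admissible (each $p_k \in (0,\infty]$ and the reciprocals sum to $1/r$), which is immediate. I would keep the write-up to a single sentence invoking the lemma with these parameters, and I would not bother restating the equality characterization, since it is not needed at this point. If it turns out to be useful later in the paper, one could note that equality in this corollary holds precisely when the vectors $(|\vec{x}_k(t)|^s)_{t \in T}$ are all proportional to a common nonnegative vector (after normalization by $\normp{s}{\vec{x}_k}^s$), but this observation can be deferred until the place where it is actually applied.
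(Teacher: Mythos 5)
Your proof is correct and is exactly the argument the paper gives: set $r=1$ and $p_1=\cdots=p_s=s$ in Lemma~\ref{lemma:holderfull}. Nothing further is needed.
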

\begin{proof}
Take $r=1$ and, for $1 \le i \le s$, take $p_i = s$ in the statement of Lemma~\ref{lemma:holderfull}.
\end{proof}

 \begin{remark}
 When $s=2$, the inequality in Corollary \ref{cor:holder}, is equivalent to the Cauchy-Schwartz inequality: 
$\left< \vec{x}_1, \vec{x}_2 \right> \le \norm{\vec{x}_1}\norm{\vec{x}_2}$.
 \end{remark}

\begin{definition}
We say that two vertices are \emph{twins} if they are non-adjacent and have the same neighborhood. \emph{Cloning} a vertex $v$ of a graph $G$ means to create a new graph $\widetilde{G}$ whose vertex set is $V(G)\cup\{\widetilde{v}\}$ where $\widetilde{v}$ is a new vertex which is a twin of $v$.
\end{definition}
 
For the next lemma we consider the following operation: take an independent set $S$ of a graph $G$, select a particular vertex $v\in S$, delete all vertices in $S-v$ and add $s-1$ new twins of $v$. The result is a new graph which has at least as many good colorings as $G$.

\begin{lemma}\label{lemma:replaceIndSet}
Let $\widehat{F}_k$ be any $r$-coloring of $K_k$. Let $G$ be a graph on $n$ vertices, $S \subset V(G)$ be an independent set with $s = |S|$, $H = G-S$, and $A=V(G)-S$. There exists a vertex $v \in S$ with the following property: if we construct the graph $\widetilde{G}$ with $V(\widetilde{G}) = V(H) \cup \widetilde{S}$, where $\widetilde{S}$ is an independent set on $s$ vertices, each of which is a twin of $v$, and $\widetilde{G}[A] = G[A]$, then $c_{r,\widehat{F}_k}(\widetilde{G})\ge c_{r,\widehat{F}_k}(G)$.
\end{lemma}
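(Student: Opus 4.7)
The plan is to rewrite $c_{r,\widehat{F}_k}(G)$ as an $\ell_1$-norm of a pointwise product of vectors indexed by the good colorings of $H$, and then apply Corollary~\ref{cor:holder} to bound it by a product of $\ell_s$-norms. A pigeonhole-style maximum argument then selects the distinguished vertex $v \in S$.

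More concretely, since $S$ is independent, Proposition~\ref{prop:indepset} gives
\[
c_{r,\widehat{F}_k}(G) \;=\; \sum_{\widehat{H}} \prod_{u \in S} c(u,\widehat{H}) \;=\; \normp{1}{\prod_{u \in S} \vec{u}_H},
\]
where the sum ranges over all $\widehat{F}_k$-free $r$-colorings $\widehat{H}$ of $H$ and the last equality uses the fact that every entry of each $\vec{u}_H$ is non-negative. By Corollary~\ref{cor:holder} (applied with the $|S|=s$ vectors $\{\vec{u}_H : u \in S\}$),
\[
\normp{1}{\prod_{u \in S} \vec{u}_H} \;\le\; \prod_{u \in S} \normp{s}{\vec{u}_H}.
\]
Let $v \in S$ be a vertex for which $\normp{s}{\vec{u}_H}$ is maximized over $u \in S$; then the right-hand side is at most $\normp{s}{\vec{v}_H}^{\,s}$. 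On the other hand, in the cloned graph $\widetilde{G}$, every vertex $\widetilde{u} \in \widetilde{S}$ has the same neighborhood in $H$ as $v$, hence contributes the same vector $\vec{v}_H$. Applying Proposition~\ref{prop:indepset} again to the independent set $\widetilde{S}$ yields
\[
c_{r,\widehat{F}_k}(\widetilde{G}) \;=\; \sum_{\widehat{H}} \vec{v}_H(\widehat{H})^{\,s} \;=\; \normp{s}{\vec{v}_H}^{\,s},
\]
and chaining the three displays gives the claimed inequality $c_{r,\widehat{F}_k}(G) \le c_{r,\widehat{F}_k}(\widetilde{G})$.

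I do not expect any serious obstacle: the main content is just packaging the counts as the vectors from Definition~\ref{def:profilevector} so that H\"older can be invoked, which is the reason that Corollary~\ref{cor:holder} was singled out with exactly $s$ factors and $\ell_s$ exponents. The only small point to be careful about is verifying that every $\widehat{F}_k$-free coloring of $H$ indexes the vectors consistently (so the same index set $T$ is used across all $\vec{u}_H$, as required by Corollary~\ref{cor:holder}) and that the twins of $v$ in $\widetilde{G}$ really do realize the factor $\vec{v}_H(\widehat{H})^{s}$ because each twin extends $\widehat{H}$ independently to the edges going into $V(H)$. Both are immediate from the definitions, so the proof is essentially a one-line application of H\"older once the right notation is in place.
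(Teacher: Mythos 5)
Your proposal is correct and is essentially identical to the paper's own proof: both decompose $c_{r,\widehat{F}_k}(G)$ via Proposition~\ref{prop:indepset} as $\normp{1}{\prod_{u\in S}\vec{u}_H}$, apply Corollary~\ref{cor:holder} to bound this by $\prod_{u\in S}\normp{s}{\vec{u}_H}$, choose $v$ to maximize $\normp{s}{\vec{u}_H}$, and identify $c_{r,\widehat{F}_k}(\widetilde{G})=\normp{s}{\vec{v}_H}^{\,s}$. No differences worth noting.
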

\begin{proof}
Let $S$ be any independent set in $G$, and let $H = G-S$. For each $u \in S$, consider the vector $\vec{u}_H$ as in Definition~\ref{def:profilevector}. By Proposition~\ref{prop:indepset}, the total number of $\widehat{F}_k$-free $r$-coloring of $G$ is
$$c(G) = \sum_{\widehat{H}} c(G \mid \widehat{H}_k) = \sum_{\widehat{H}} \prod_{u\in S} c(u, \widehat{H}) = \normp{1}{\prod_{u\in S} \vec{u}_H},$$
where the sums are taken over all possible ${F_k}$-free $r$-colorings $\widehat{H}$ of $H$. (For the last equality we also used that every coordinate of $\vec{u}_H$ is non-negative).

Let $v$ be a vertex in $S$ for which $\normp{s}{\vec{v}_H}$ is maximum. This fact, together with Hölder's Inequality (Corollary~\ref{cor:holder}), gives us:
\begin{equation} \label{eq:usingholder}
\normp{1}{\prod_{u\in S} \vec{u}_H} \le \prod_{\vec{u}\in S}\normp{s}{\vec{u}_H} \le \normp{s}{\vec{v}_H}^s.
\end{equation}

On the other hand, for the graph $\widetilde{G}$ defined in the statement of this lemma, we have:

$$c(\widetilde{G}) = \sum_{\widehat{H}} c(v, \widehat{H})^s = \normp{s}{\vec{v}_H}^s.$$
 
Therefore, $c(\widetilde{G}) \ge c(G)$.
\end{proof}

\begin{cor}\label{cor:ColorVectorsAreEqual} If $G$ is an $(r, \widehat{F}_k)$-extremal graph, $S\subseteq V(G)$ is an independent set and $H = G-S$, then for every $u, v \in S$ we must have $\vec{v}_H = \vec{u}_H$.
\end{cor}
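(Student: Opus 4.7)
The plan is to apply Lemma~\ref{lemma:replaceIndSet} to the independent set $S$ and then squeeze information out of the equality cases of the two inequalities that appear in its proof. The extremality of $G$ forces the bound produced by the cloning construction to be tight, which in turn pins down the profile vectors $\vec{u}_H$ for $u \in S$.

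First, invoke Lemma~\ref{lemma:replaceIndSet} for $G$ and $S$ to obtain a vertex $v \in S$ that maximizes $\normp{s}{\vec{u}_H}$ among $u \in S$ and a cloned graph $\widetilde{G}$ on $n$ vertices with $c_{r,\widehat{F}_k}(\widetilde{G}) \geq c_{r,\widehat{F}_k}(G)$. Since $G$ is $(r,\widehat{F}_k)$-extremal and $\widetilde{G}$ has the same number of vertices, we must in fact have $c_{r,\widehat{F}_k}(\widetilde{G}) = c_{r,\widehat{F}_k}(G)$. Tracing the proof of Lemma~\ref{lemma:replaceIndSet}, this forces both inequalities in display~\eqref{eq:usingholder} to be tight, i.e.\
$$\normp{1}{\prod_{u\in S} \vec{u}_H} \;=\; \prod_{u\in S}\normp{s}{\vec{u}_H} \;=\; \normp{s}{\vec{v}_H}^{s}.$$
From the second equality together with the maximality of $v$, one obtains immediately that $\normp{s}{\vec{u}_H} = \normp{s}{\vec{v}_H}$ for every $u \in S$. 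From the first equality, which is precisely the equality case of the generalized Hölder's Inequality (Lemma~\ref{lemma:holderfull}, with $r=1$ and all $p_i = s$), one deduces that for any $u, u' \in S$ and any $\widehat{F}_k$-free $r$-coloring $\widehat{H}$ of $H$, the ratios $\vec{u}_H(\widehat{H})^{s}/\normp{s}{\vec{u}_H}$ and $\vec{u}'_H(\widehat{H})^{s}/\normp{s}{\vec{u}'_H}$ coincide. Since the entries of the profile vectors are non-negative and their $s$-norms are equal, this identity forces $\vec{u}_H(\widehat{H}) = \vec{u}'_H(\widehat{H})$ coordinate-wise, so $\vec{u}_H = \vec{u}'_H$ for every pair $u, u' \in S$, which is the desired conclusion.

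I do not foresee a substantial obstacle, since everything is extracted by carefully applying the equality cases already packaged in Lemmas~\ref{lemma:holderfull} and~\ref{lemma:replaceIndSet}; the only minor point to verify is that coordinates of the profile vectors which happen to vanish do so simultaneously for all $u \in S$, which is automatic from the symmetric form of the Hölder equality condition.
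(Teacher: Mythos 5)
Your proposal is correct and follows the paper's own proof essentially verbatim: apply Lemma~\ref{lemma:replaceIndSet}, use extremality of $G$ to force equality in both steps of~\eqref{eq:usingholder}, deduce equal $\ell_s$-norms from the maximality of $v$, and then invoke the equality case of Lemma~\ref{lemma:holderfull} together with the non-negativity of the profile vectors to get coordinate-wise equality. Your closing remark about simultaneously vanishing coordinates in fact cleans up a small imprecision in the paper, which says ``positive entries'' where only non-negativity (plus positivity of the norms) is needed.
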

\begin{proof}
Let $G$ be a graph as in the statement. Let $\widetilde{G}$ be the graph defined in Lemma~\ref{lemma:replaceIndSet}. Since $G$ is extremal, and $c_{r,\widehat{F}_k}(\widetilde{G}) \ge c_{r,\widehat{F}_k}(G)$, we must have $c_{r,\widehat{F}_k}(\widetilde{G}) = c_{r,\widehat{F}_k}(G)$. Therefore, we must also have equality in both inequalities in \eqref{eq:usingholder}. From the second one, it follows that for every $u, v \in S$, we must have $\normp{s}{\vec{u}_H} = \normp{s}{\vec{v}_H}$. From the first one, where we used Lemma \ref{lemma:holderfull}, the fact that $\normp{s}{\vec{u}_H} = \normp{s}{\vec{v}_H}$, together with the fact that all our vectors have only positive entries, implies that $\vec{v}_H = \vec{u}_H$.
\end{proof}

\begin{cor}\label{corolary:cloning}
If $G$ is an $(r, \widehat{F}_k)$-extremal graph, and $u, v\in V(G)$ are any non-adjacent vertices, then deleting $v$ and cloning $u$ produces a graph that is also extremal.
\end{cor}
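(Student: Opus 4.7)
The plan is to apply Corollary~\ref{cor:ColorVectorsAreEqual} directly to the independent set $S = \{u,v\}$, which is indeed an independent set of $G$ since $u$ and $v$ are non-adjacent. Setting $H = G - S$, the corollary yields $\vec{u}_H = \vec{v}_H$. I will then evaluate both $c_{r,\widehat{F}_k}(G)$ and $c_{r,\widehat{F}_k}(\widetilde{G})$ using Proposition~\ref{prop:indepset} and observe that the two expressions coincide.

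For the first computation, I apply Proposition~\ref{prop:indepset} to $G$ with independent set $S = \{u,v\}$:
\[
c_{r,\widehat{F}_k}(G) = \sum_{\widehat{H}} c(u,\widehat{H})\, c(v,\widehat{H}) = \langle \vec{u}_H, \vec{v}_H \rangle,
\]
where the sum ranges over all $\widehat{F}_k$-free $r$-colorings $\widehat{H}$ of $H$. For the second, I let $\widetilde{G}$ denote the graph obtained from $G$ by deleting $v$ and adding a twin $\widetilde{u}$ of $u$. Then $\{u, \widetilde{u}\}$ is an independent set of $\widetilde{G}$ with $\widetilde{G} - \{u,\widetilde{u}\} = H$, and since $\widetilde{u}$ has the same neighborhood as $u$ in $V(H)$, the coordinates of $\vec{\widetilde{u}}_H$ equal those of $\vec{u}_H$. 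A second application of Proposition~\ref{prop:indepset} gives
\[
c_{r,\widehat{F}_k}(\widetilde{G}) = \sum_{\widehat{H}} c(u,\widehat{H})\, c(\widetilde{u},\widehat{H}) = \normp{2}{\vec{u}_H}^2.
\]

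Combining the two expressions with $\vec{u}_H = \vec{v}_H$ yields
\[
c_{r,\widehat{F}_k}(G) = \langle \vec{u}_H, \vec{u}_H \rangle = \normp{2}{\vec{u}_H}^2 = c_{r,\widehat{F}_k}(\widetilde{G}),
\]
so $\widetilde{G}$ attains the same number of $\widehat{F}_k$-free colorings as the extremal graph $G$ and is therefore itself extremal. There is no real obstacle here; the entire content is that Corollary~\ref{cor:ColorVectorsAreEqual} promotes the inequality of Lemma~\ref{lemma:replaceIndSet} to an equality for non-adjacent pairs, and Proposition~\ref{prop:indepset} then turns this into the claimed statement about the cloning operation.
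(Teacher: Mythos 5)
Your proof is correct and follows essentially the same route as the paper: apply Corollary~\ref{cor:ColorVectorsAreEqual} with $S=\{u,v\}$ to obtain $\vec{u}_H=\vec{v}_H$, then observe that this forces the coloring counts of $G$ and of the cloned graph to agree. The paper states the final step more tersely (``replacing $v$ by a twin of $u$ does not change the number of colorings''), whereas you spell it out explicitly via Proposition~\ref{prop:indepset}, but the underlying argument is identical.
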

\begin{proof}
 Since $G$ is extremal, by Corollary \ref{cor:ColorVectorsAreEqual} with $S = \{u, v\}$ and $G_{uv} = G-\{u,v\}$, we must have $\vec{u}_{G_{uv}} = \vec{v}_{G_{uv}}$, therefore replacing $v$ by a twin of $u$ (or $u$ by a twin of $v$) does not change the number of colorings of the graph.
\end{proof}

By repeatedly applying Corollary~\ref{corolary:cloning} above, we can easily show that \emph{there exists} a complete multipartite graph on $n$ vertices which is $(r, \widehat{F}_k)$-extremal. Although this is a direct consequence of Corollary~\ref{corolary:cloning}, we spell out the details. On the other hand, showing that (for non-monochromatic patterns) \emph{every} extremal is a complete multipartite graph will require more work.

\begingroup
\def\thetheorem{\ref{theorem:thereIsExtremalMultipartite}}
\begin{theorem}
\theoremthereIsExtremalMultipartite
\end{theorem}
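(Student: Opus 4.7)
My plan is to show that by repeatedly applying Corollary~\ref{corolary:cloning}, any extremal graph can be transformed into a complete multipartite extremal graph in finitely many steps, using a convexity-type monovariant to force termination.

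Starting from any $(r,\widehat{F}_k)$-extremal graph $G$ on $n$ vertices, I would argue as follows. If $G$ is complete multipartite we are done, so assume otherwise; then non-adjacency on $V(G)$ fails to be transitive, and there exist $u,v,w$ with $uv,vw\notin E(G)$ and $uw\in E(G)$. In particular, $u$ and $v$ are non-adjacent but distinguished by $w$, so they lie in distinct twin-equivalence classes (two vertices being twin-equivalent iff they are twins or equal). Let $U$ and $V$ denote these classes, and, relabeling if needed, assume $|U|\ge |V|$. By Corollary~\ref{corolary:cloning}, deleting $v$ and cloning $u$ yields an extremal graph $\widetilde G$ on $n$ vertices.

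To force termination, I would use the potential
\[
\Phi(G) \;=\; \sum_{C} \binom{|C|}{2},
\]
summed over the twin-equivalence classes $C$ of $G$. The key observation is that any pair of vertices in $V(G)\setminus\{v\}$ that were twins in $G$ are still twins in $\widetilde G$, since the operation affects both vertices identically. Hence every twin class distinct from $U$ and $V$ stays inside a single twin class of $\widetilde G$ (distinct old classes may even merge, which can only raise $\Phi$), while the class of $u$ grows to contain $U\cup\{\widetilde u\}$ and the class of $v$ shrinks to $V\setminus\{v\}$. This yields
\[
\Phi(\widetilde G)-\Phi(G) \;\ge\; \binom{|U|+1}{2}+\binom{|V|-1}{2}-\binom{|U|}{2}-\binom{|V|}{2} \;=\; |U|-|V|+1 \;\ge\; 1.
\]
Since $\Phi$ is a non-negative integer bounded by $\binom{n}{2}$, the iteration must halt; at the moment it stops, no non-adjacent, non-twin pair exists, so the graph is complete multipartite. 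The main content of the argument is the monotonicity of $\Phi$ together with the characterization of complete multipartite graphs by the transitivity of non-adjacency; both are elementary, so beyond invoking Corollary~\ref{corolary:cloning} iteratively I do not anticipate a serious obstacle.
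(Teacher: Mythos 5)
Your proposal is correct. It uses the same engine as the paper, namely repeated application of Corollary~\ref{corolary:cloning}, but your termination argument is genuinely different. The paper proceeds in batches: it picks a vertex $v_i$ with a neighbor in the unresolved set $R_i$, replaces \emph{all} non-neighbors of $v_i$ by twins of $v_i$ in one sweep of Corollary~\ref{corolary:cloning}, and lets $R_{i+1}$ be the neighbors of $v_i$ inside $R_i$; since $R_{i+1}\subsetneq R_i$, the process terminates in at most $n$ rounds and the parts of the final complete multipartite graph are read off directly as $S_1,\dots,S_t,R_t$. You instead perform one deletion-and-clone per step, orienting the replacement so that a vertex of the smaller twin class is deleted and a vertex of the larger one is cloned, and you track the potential $\Phi(G)=\sum_C\binom{|C|}{2}$ over twin-equivalence classes. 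Your key observations check out: twins among $V(G)\setminus\{v\}$ remain twins in $\widetilde G$, so the twin partition of $\widetilde G$ is a coarsening of the partition obtained from that of $G$ by moving $\widetilde u$ into $U$ and removing $v$ from $V$; the function $\sum_C\binom{|C|}{2}$ is nondecreasing under coarsening; and the direct computation then gives $\Phi(\widetilde G)-\Phi(G)\ge |U|-|V|+1\ge 1$. Since $\Phi$ is a nonnegative integer bounded by $\binom{n}{2}$, the iteration halts, and it can halt only when non-adjacency is transitive, i.e.\ when the graph is complete multipartite. Both routes are valid; yours has the virtue of a compact, self-contained monovariant, while the paper's version makes the final multipartition explicit as a by-product of the construction.
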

\addtocounter{theorem}{-1}
\endgroup

\begin{proof}[Proof of Theorem~\ref{theorem:thereIsExtremalMultipartite}]
Let $G$ be any $(r, \widehat{F}_k)$-extremal graph on $n$ vertices. We will build a sequence of extremal graphs, each on $n$ vertices, say $G_0, G_1, \ldots, G_t$, where $G_0 = G$, and $G_t$ is a complete multipartite graph. We do it in such a way that, for $i\ge 1$, we have $V(G_i) = S_1 \cup \cdots \cup S_i \cup R_{i}$, where for every $j \in \{1, \ldots, i\}$, the set $S_j$ is an independent set and every vertex in $S_j$ is adjacent to every vertex outside $S_j$ (including those in $R_i$), but we have no control of the edges inside $R_i$. It will also hold that $R_t \subset R_{t-1} \subset \cdots \subset R_1 \subset V(G)$, and $R_t$ will be independent.

To simplify the notation, we also define $R_0 = V(G_0) = V(G)$. Assume that we have constructed $G_i$, for some $i\ge 0$. If $R_{i}$ is an independent set, we have found a complete multipartite graph which is extremal, so we can set $t = i$ and stop. Otherwise, let $v_i$ be any vertex of $R_i$ that has a neighbor in $R_i$. Note that, by the definition of $G_i$, all non-neighbors of $v_i$ belong to $R_i$. Let $\overline{d}_i$ be the number of non-neighbors of $v_i$. We can obtain $G_{i+1}$ applying Corollary \ref{corolary:cloning} successively $\overline{d}_{i}$ times, deleting each non-neighbor of $v_i$ and adding twins of $v_i$ (one by one). Let $S_{i+1}$ be the set formed by $v_i$ and its new twins and let $R_{i+1}$ to be the set of neighbors of $v_i$ in $R_i$. Observe that $R_{i+1}$ is strictly smaller than $R_i$ since it does not contain $v_i$. It is also important to notice that, at every step when we use Corollary \ref{corolary:cloning} we apply it to the whole graph $G_i$ and not only to $G_i[R_i]$.
\end{proof}

Observe that in the proof of Theorem~\ref{theorem:thereIsExtremalMultipartite} we may select $v_i$ as the vertex with the largest degree in $R_i$. By doing this, we obtain, starting from an extremal graph $G$, a complete multipartite graph that has at least as many edges as $G$. In the next lemma, we show that if $G$ is not complete multipartite itself,  we can find another complete multipartite extremal graph by only deleting edges of $G$.

\begin{lemma}[Edge deletion lemma] \label{lemma:containsmultipartite}
Let $\widehat{F}_k$ be any $r$-coloring of graph $K_k$ and $r \ge 2$ be a natural number. Let $G$ be an $(r, \widehat{F}_k)$-extremal graph and assume that $G$ is not a complete multipartite graph. For any $u, v, w$  such that $uv, uw \notin E(G)$ and $vw \in E(G)$, if we delete the edge $vw$, then the resulting graph is still extremal.
\end{lemma}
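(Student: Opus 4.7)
The plan is to establish $c(G-vw) = c(G)$ via a two-stage symmetrization argument. Set $G' := G - vw$ and $I := G - \{u,v,w\}$. The starting observation is that, because $uv, uw \notin E(G)$, every $K_k$ through $u$ in $G$ avoids both $v$ and $w$, so once a good coloring $\widehat{I}$ of $I$ is fixed, the coloring of the edges incident to $u$ decouples from the coloring of the edges incident to $\{v,w\}$ (including $vw$). Writing $a(\widehat{I}) := c_G(u, \widehat{I})$, this gives $c(G \mid \widehat{I}) = a(\widehat{I})\cdot \gamma(\widehat{I})$, where $\gamma(\widehat{I})$ denotes the number of good joint extensions of $\widehat{I}$ by the edges incident to $v$ or $w$ together with $vw$. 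Since $\{u,v,w\}$ is independent in $G'$, Proposition~\ref{prop:indepset} gives $c(G' \mid \widehat{I}) = a(\widehat{I})\cdot c_{G'}(v, \widehat{I})\cdot c_{G'}(w, \widehat{I})$.

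The next step is to express $\gamma(\widehat{I})$ using extremality. Apply Corollary~\ref{cor:ColorVectorsAreEqual} to $G$ with the independent pair $\{u,v\}$: for every good coloring $\widehat{H}$ of $G - \{u,v\}$ one has $c_G(v, \widehat{H}) = c_G(u, \widehat{H})$, and the right-hand side depends only on the restriction $\widehat{I}$ of $\widehat{H}$ to $I$, so it equals $a(\widehat{I})$. Decomposing $\widehat{H} = (\widehat{I}, \chi_w)$ and summing over the admissible $w$-colorings $\chi_w$ collapses $\gamma(\widehat{I})$ to $a(\widehat{I})\cdot c_{G'}(w,\widehat{I})$; the symmetric argument using $\{u,w\}$ gives $\gamma(\widehat{I}) = a(\widehat{I}) \cdot c_{G'}(v,\widehat{I})$. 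Hence the common value $b(\widehat{I}) := c_{G'}(v,\widehat{I}) = c_{G'}(w,\widehat{I})$ is well-defined whenever $a(\widehat{I}) > 0$, while the case $a(\widehat{I})=0$ contributes $0$ to both counts. Summing over $\widehat{I}$ yields the compact expressions $c(G) = \sum_{\widehat{I}} a^2 b$ and $c(G') = \sum_{\widehat{I}} a\,b^2$.

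The crux, and the main obstacle, is to upgrade these to the pointwise identity $a(\widehat{I}) = b(\widehat{I})$. The idea is to fabricate, by cloning, an auxiliary extremal graph in which $u$ and $w$ are both contained in the same three-element independent set. By Corollary~\ref{corolary:cloning} applied to the non-adjacent pair $\{u,v\}$, deleting $v$ and inserting a new twin $u'$ of $u$ produces an extremal graph $G_1$. Because $N_{G_1}(u') = N_G(u)$ and $uw \notin E(G)$, we have $u'w \notin E(G_1)$, so $\{u,u',w\}$ is independent in $G_1$ and $G_1 - \{u,u',w\} = I$. A brief neighborhood check then shows that $c_{G_1}(u,\widehat{I}) = c_{G_1}(u',\widehat{I}) = a(\widehat{I})$ (the second equality via the isomorphism between $u$'s and $u'$'s neighborhoods in $G_1$) and $c_{G_1}(w,\widehat{I}) = b(\widehat{I})$. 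Applying Corollary~\ref{cor:ColorVectorsAreEqual} a second time, now in the extremal graph $G_1$ with $S = \{u,u',w\}$, forces all three of these profiles to coincide, giving $a = b$.

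Combining everything, $c(G) = \sum_{\widehat{I}} a^3 = c(G')$, so $G'$ is extremal, as required. The single hypothesis that $u$ is a common non-neighbor of $v$ and $w$ in $G$ is not, by itself, enough to directly compare $\sum a^2 b$ with $\sum a b^2$; it is the cloning step, which manufactures a surrogate $u'$ of $u$ and places $u,u',w$ together in a single independent set of an auxiliary extremal graph, that actually pins down $a = b$.
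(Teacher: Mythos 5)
Your proof is correct, and it takes a genuinely different route to the key pointwise identity $\vec{u}_H=\vec{v}_H=\vec{w}_H$ (in your notation, $a(\widehat{I}) = c_{G'}(v,\widehat{I}) = c_{G'}(w,\widehat{I})$ for every good $\widehat{I}$). The paper works with the double-clone graph $G^*$ obtained from $G$ by replacing both $v$ and $w$ with twins of $u$, computes $c(G^*)=\normp{3}{\vec{u}_H}^3$, shows separately that $\normp{3}{\vec{w}_H}\le\normp{3}{\vec{u}_H}$, rewrites $c(G)=\sum_{\widehat{H}}c(u,\widehat{H})^2 c(w,\widehat{H})$ via Corollary~\ref{cor:ColorVectorsAreEqual} on $\{u,v\}$, and then chains two H\"older estimates, with the equality condition in Lemma~\ref{lemma:holderfull} delivering $\vec{u}_H=\vec{w}_H$. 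Your argument replaces this H\"older chain with a single structural step: clone $u$ only once (deleting $v$) to form an extremal graph $G_1$ in which $\{u,u',w\}$ is an independent set with $G_1-\{u,u',w\}=I$, and then invoke Corollary~\ref{cor:ColorVectorsAreEqual} on this three-element independent set, which immediately forces $c_{G_1}(u,\widehat{I})=c_{G_1}(u',\widehat{I})=c_{G_1}(w,\widehat{I})$, i.e.\ $a=b$. This is slicker in that the explicit norm comparison disappears (it is absorbed, once and for all, in Corollary~\ref{cor:ColorVectorsAreEqual}), and it makes clearer where the symmetry between $u$ and $w$ actually comes from. Two small remarks: the case distinction around $a(\widehat{I})=0$ is ultimately unnecessary once the cloning step establishes $a=c_{G'}(v,\widehat{I})=c_{G'}(w,\widehat{I})$ unconditionally; and your first application of Corollary~\ref{cor:ColorVectorsAreEqual} (to $\{u,v\}$ in $G$) can be avoided by instead using $c(G)=c(G_1)$ together with Proposition~\ref{prop:indepset} applied to $\{u,u',w\}$ in $G_1$, which yields $c(G)=\sum_{\widehat{I}}a(\widehat{I})^2 c_{G'}(w,\widehat{I})$ directly. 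Either way, the argument is sound.
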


\begin{proof}
Let $G$ be a graph as in the statement. Since $G$ is not a complete multipartite graph, there exists vertices $u, v, w$ such that $uv, uw \notin E(G)$ and $vw \in E(G)$. Let $u,v,w$ be any such vertices.

Let $H = G - \{u, v, w\}$, and $H^x = G[V(H)\cup x]$ for $x \in \{u,v,w\}$. Let $G'$ be the graph obtained from $G$ by deleting the edge $vw$ (but not the vertices $u$ or $v$), and let $G^*$ be the graph obtained from $H^u$ by adding another two clones of $u$, say $u_1$ and $u_2$. By Corollary \ref{corolary:cloning}, the graph $G^*$ is also extremal, as we may first apply the replacement operation to the pair $u,v$ (deleting $v$ and adding $u_1$) and apply it again to the pair $u,w$. Therefore, $c(G) = c(G^*)$.

Applying Proposition \ref{prop:indepset} to $G^*$ with $S = \{u,u_1,u_2\}$, we have
$$c(G^*) = \sum_{\widehat{H}} c(G^* \mid \widehat{H}) = \sum_{\widehat{H}} c(u,\widehat{H})^3 = \normp{3}{\vec{u}_H}^3,$$
where the sum is taken over all $\widehat{F}_k$-free $r$-colorings of $H$.

Observe that, with an analogous computation, if we start from $H$ and add three clones of $w$ instead of $u$, the resulting graph has $\normp{3}{\vec{w}_H}^3$ good colorings. But we do not know if such graph is extremal, so we have only
\begin{equation} \label{eq:normUvsW}
\normp{3}{\vec{w}_H}^3 \le \normp{3}{\vec{u}_H}^3.
\end{equation}

On the other hand, since there are no edges from $u$ to $\{v, w\}$, we can compute $c(G)$ as follows:
\begin{equation}
\begin{split}
c(G) &= \sum_{\widehat{H}} \left(c(u,\widehat{H})\cdot c(G - u \mid \widehat{H})\right) \\
	 &= \sum_{\widehat{H}}\left( c(u,\widehat{H})\cdot  \left( \sum_{\widehat{H^w} \mid \widehat{H}} c(v, \widehat{H^w}) \right)\right). \label{eq:usingHolderA}
\end{split}
\end{equation}

Here, the inner sum is taken over the good colorings of $H^w$ that extend a given good coloring of $H$, that is, over the colorings of the edges from $w$ to $H$, for which the resulting coloring is good. By Corollary \ref{cor:ColorVectorsAreEqual}, since $G$ is extremal and $uv \notin E(G)$, we have $\vec{v}_{H^{w}} = \vec{u}_{H^{w}}$, that is $c(v,\widehat{H^w}) = c(u,\widehat{H^w})$ for every $\widehat{H^w}$. Finally, note that $c(u,\widehat{H^w})$ does not depend on the colors of the edges from $w$ to $H$, so $c(u,\widehat{H^w}) = c(u,\widehat{H})$. Therefore,

\begin{align}	 
c(G) &= \sum_{\widehat{H}}\left( c(u,\widehat{H}) \left( \sum_{\widehat{H^w} \mid \widehat{H}} c(u, \widehat{H}) \right)\right) \\
     &= \sum_{\widehat{H}}\left( c(u,\widehat{H}) c(u,\widehat{H}) \sum_{\widehat{H^w} \mid \widehat{H}} 1 \right) \\
	 &= \sum_{\widehat{H}} c(u,\widehat{H})^2c(w,\widehat{H}) \\
	 &\le \normp{3}{\vec{u}_H} \normp{3}{\vec{u}_H} \normp{3}{\vec{w}_H} \label{eq:usingHolderB}\\
	 &\le \normp{3}{\vec{u}_H}^3. \label{eq:usingHolderC}
\end{align}

Notice that to get \eqref{eq:usingHolderB} we used Hölder's Inequality (Corollary~\ref{cor:holder}), and \eqref{eq:usingHolderC} follows from \eqref{eq:normUvsW}. Finally, since $c(G) = \normp{3}{\vec{u}_H}^3$, we must have equality in both \eqref{eq:usingHolderB} and \eqref{eq:usingHolderC}, which in turn leads to $\normp{3}{\vec{u}_H} = \normp{3}{\vec{w}_H}$. The equality condition in Lemma~\ref{lemma:holderfull} implies that $\vec{u}_H = \vec{w}_H$. Analogously, $\vec{u}_H = \vec{v}_H$. It follows that $$c(G^*) = \sum_{\widehat{H}} c(u,\widehat{H})c(v,\widehat{H})c(w,\widehat{H}) = c(G').$$
\end{proof}

Finally, we use Lemma~\ref{lemma:containsmultipartite} to prove our main result of this section, which we restate below.

\begingroup
\def\thetheorem{\ref{thm:nonmonoExtremalsAreMultipartite}}
\begin{theorem}
\theoremExtremalstatement
\end{theorem}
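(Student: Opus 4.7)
The plan is by contradiction. Suppose $G$ is an $(r,\widehat{F}_k)$-extremal graph on $n$ vertices that is not complete multipartite; then there exist $u,v,w\in V(G)$ with $uv,uw\notin E(G)$ and $vw\in E(G)$, and Lemma~\ref{lemma:containsmultipartite} gives that $G-vw$ is also extremal. Iterating the reduction---at each step picking a new witness triple in the current graph---yields a finite chain $G=G_0,G_1,\dots,G_t$ of extremal graphs with strictly decreasing edge counts that terminates at a complete multipartite graph $\tilde G=G_t$. Writing $vw$ for the edge deleted at the last step and $u$ for the associated witness, we have $uv,uw,vw\notin E(\tilde G)$, so $u,v,w$ lie in a common part $V_1$ of $\tilde G$ with $|V_1|\ge 3$; the penultimate graph $\tilde G+vw$ is also extremal, whence
\[
 c_{r,\widehat{F}_k}(\tilde G+vw)=c_{r,\widehat{F}_k}(\tilde G).
\]

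The plan is to contradict this equality by showing that $c_{r,\widehat{F}_k}(\tilde G+vw)>c_{r,\widehat{F}_k}(\tilde G)$. Decomposing $c(\tilde G+vw)=\sum_\varphi e(\varphi)$ over $\widehat{F}_k$-free colorings $\varphi$ of $\tilde G$, where $e(\varphi)\in\{0,\dots,r\}$ is the number of colors $c$ such that $\varphi\cup\{vw\mapsto c\}$ is $\widehat{F}_k$-free, this reduces to $\sum_\varphi(e(\varphi)-1)>0$. I will establish (i) that every good $\varphi$ admits at least one good extension of $vw$, and (ii) that some good $\varphi^{\ast}$ admits at least two good extensions. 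Claim (i) follows from the extremality of $\tilde G+vw$: applying Corollary~\ref{cor:ColorVectorsAreEqual} to the independent pair $\{u,v\}$ in $\tilde G+vw$ (with $H_0=\tilde G-\{u,v\}$) gives $\vec u_{H_0}=\vec v_{H_0}$, a balance identity that, combined with the non-monochromaticity of $\widehat{F}_k$, supplies for every good $\varphi$ a canonical color of $vw$ extending $\varphi$ to a good coloring of $\tilde G+vw$.

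If the number $m$ of parts of $\tilde G$ satisfies $m\le k-2$, then $\tilde G+vw$ contains no $K_k$, so every $r$-coloring is good and $c(\tilde G+vw)=r\cdot c(\tilde G)>c(\tilde G)$ immediately. Assume henceforth $m\ge k-1$. For claim (ii), the generic candidate is the monochromatic coloring $\varphi_0$ of $\tilde G$ in color~$1$: it is good because $\widehat{F}_k$ is non-monochromatic, extending $vw\mapsto 1$ keeps the coloring monochromatic (hence good), and extending $vw\mapsto i$ with $i\ne 1$ produces in every $K_k$-copy through $vw$ the unique two-class pattern $Q_k$ of sizes $1$ and $\binom{k}{2}-1$. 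So $e(\varphi_0)\ge 2$ whenever $\widehat{F}_k\ne Q_k$, settling every generic case.

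The main obstacle is the residual case $\widehat{F}_k=Q_k$. For $k=3$, $Q_3$ coincides with $T_0$, which is excluded from the hypothesis; for $k\ge 4$, I replace $\varphi_0$ with the coloring $\varphi^{\ast}$ of $\tilde G$ that paints every edge incident to $v$ with color~$1$, every edge incident to $w$ with color~$2$, and every remaining edge with color~$1$ (when $r=2$) or color~$3$ (when $r\ge 3$). A direct case analysis shows that $\varphi^{\ast}$ is good for $k\ge 4$, and that every $K_k$-copy through $vw$ has non-$vw$ color multiset $\bigl(\binom{k-1}{2},\,k-2\bigr)$ (two-color case) or $\bigl(k-2,\,k-2,\,\binom{k-2}{2}\bigr)$ (three-color case), which remains distinct from $(1,\binom{k}{2}-1)$ after adding either color~$1$ or color~$2$ (or $3$) for $vw$; hence $e(\varphi^{\ast})\ge 2$. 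The construction matches the forbidden pattern exactly once, namely when $(r,k)=(2,4)$ and $\widehat{F}_4=P_2$, in which case the resulting $(4,2)$-split has its two minority edges $wx$ and $wy$ sharing the vertex $w$, realizing $P_2$; this is the reason for the extra exclusion of $P_2$ from the hypothesis when $r=2$.
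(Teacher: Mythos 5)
Your strategy diverges from the paper's: the paper stays with the original extremal graph $G$, derives the pointwise identity $c(v,\widehat{H^w})=c(v,\widehat{H})$ for every good coloring $\widehat{H^w}$ (their equation before the case analysis), and then refutes it for one specially chosen $\widehat{H^w}$ via an injective but non-surjective map $\phi$. You instead strip edges via Lemma~\ref{lemma:containsmultipartite} down to a complete multipartite extremal graph $\tilde G$ and try to show $c(\tilde G+vw)>c(\tilde G)$ by proving $\sum_\varphi\bigl(e(\varphi)-1\bigr)>0$. This reduction and the framing of the contradiction are sound, and your claim (ii) (exhibiting a single $\varphi$ with $e(\varphi)\geq 2$ via the monochromatic coloring, or $\varphi^{\ast}$ for $Q_k$) is correct and is morally the same move the paper makes when it chooses a special $\widehat{H^w}$.

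The gap is claim (i): it is not justified and it is false. Corollary~\ref{cor:ColorVectorsAreEqual} applied to $\{u,v\}$ in $\tilde G+vw$ gives the identity $c(u,\widehat{H_0})=c(v,\widehat{H_0})$ for each good $\widehat{H_0}$ of $H_0=\tilde G-\{u,v\}$, which is an \emph{equality of counts}, not a statement that each good coloring of $\tilde G$ admits a good color for $vw$; nothing in that balance forces the restriction map from good extensions in $\tilde G+vw$ to good extensions in $\tilde G$ to be surjective. In fact claim (i) fails outright as a combinatorial statement. Take $\widehat{F}_3=R_0$ (the rainbow triangle), $r=3$, and $\tilde G$ the complete tripartite graph with parts $\{v,w\},\{a,b\},\{c,d\}$. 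Color $va\mapsto 2,\ wa\mapsto 3,\ vb\mapsto 1,\ wb\mapsto 3,\ vc\mapsto 1,\ wc\mapsto 2,\ vd\mapsto 1,\ wd\mapsto 2$ and all of $ac,ad,bc,bd\mapsto 2$. One checks that all eight triangles of $\tilde G$ receive at most two colors, so $\varphi$ is good, yet assigning color $1$, $2$, or $3$ to $vw$ produces a rainbow triangle on $\{v,w,a\}$, $\{v,w,b\}$, or $\{v,w,c\}$ respectively, so $e(\varphi)=0$. Since $\sum_\varphi\bigl(e(\varphi)-1\bigr)=c(\tilde G+vw)-c(\tilde G)=0$ is exactly what the extremality assumption forces, the presence of one $\varphi^{\ast}$ with $e(\varphi^{\ast})\geq 2$ actually \emph{implies} the existence of such a $\varphi$ with $e(\varphi)=0$; so claims (i) and (ii) cannot both be established independently, and the attempted global count cannot close. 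The paper avoids this trap precisely by not summing over all $\varphi$: it fixes one base coloring $\widehat{H^w}$ and compares the two extension counts for that single base, where the imbalance can be exhibited directly.
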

\addtocounter{theorem}{-1}
\endgroup

\begin{proof}
	Let $\widehat{F}_k$ be a $r$-coloring as in the statement. Suppose that there exists an $(r,\widehat{F}_k)$-extremal graph $G$ which is not a complete multipartite graph. Let $u, v, w, H$, $H^v$, and $H^w$ be defined as in the proof of Lemma~\ref{lemma:containsmultipartite}. At the end of the proof, we concluded $\vec{u}_H = \vec{w}_H  = \vec{v}_H$, so for every coloring $\widehat{H}$ of $H$ we have $c(u,\widehat{H}) = c(w,\widehat{H}) = c(v,\widehat{H})$. We also noticed that, for every extension of $\widehat{H}$ to a coloring $\widehat{H^w}$, we have $c(u,\widehat{H^w}) = c(u,\widehat{H})$.

    Now note that, since $u$ and $v$ are not adjacent, by Corollary~\ref{cor:ColorVectorsAreEqual}, we have $\vec{u}_{H^w} = \vec{v}_{H^w}$, that is, $c(u, \widehat{H^w}) = c(v, \widehat{H^w})$ for every $\widehat{H^w}$. From the previous equalitities, if follows that, for every $\widehat{F}_k$-free extension $\widehat{H^w}$ of $\widehat{H}$, we must have 
   	\begin{equation} \label{eq:extension}
   	c(v,\widehat{H^w}) = c(v,\widehat{H}).
   	\end{equation}

    Our goal here is to get a contradiction from this fact (which implies that such $G$ cannot exist). We only need to find a $r$-coloring of $H$ and an extension of it to $H^w$, which is $\widehat{F}_k$-free and such that equation \eqref{eq:extension} does not hold. We will split the proof into cases, depending on the pattern of $\widehat{F}_k$. In each case we proceed as follows. We fix a particular good coloring $\widehat{H^w}$ of $H^w$ and consider the coloring $\widehat{H}$ induced by it in $H$. Let $\mathcal{H}(v)$ and $\mathcal{H}^w(v)$ denote the set of extensions of $\widehat{H}$ to $H^v$ and of $\widehat{H^w}$ to $G-u$, respectively. To find a contradiction to~\eqref{eq:extension}, we show that there is an injective mapping $\phi \colon \mathcal{H}(v) \rightarrow \mathcal{H}^w(v)$ that is not surjective. 


	We say that a coloring of $\widehat{F}_k$ is \emph{almost monochromatic} if it is not monochromatic and there exists a vertex $x \in F_k$ such that all edges not incident to $x$ have the same color, say color 1, and there is at least one edge incident to $x$ that is also of color 1. We call such $x$ the \emph{special} vertex. 

The remainder of the proof splits the analysis into four cases. Figure~\ref{fig:cases} illustrates how colorings are extended in each case.

\noindent {\bf Case 1}:
$\widehat{F}_k$ is \emph{not} almost monochromatic. Let $\widehat{H^w}$ be the coloring that assigns color blue to all edges of $H^w$, so that $\widehat{H}$ is a blue coloring of $H$. To define the injective mapping  $\phi \colon \mathcal{H}(v) \rightarrow \mathcal{H}^w(v)$, for any extension of  $\widehat{H}$ to $H^v$, consider the same extension of $\widehat{H^w}$ to the edges between $v$ and $H$ and assign blue to the edge $vw$. By definition of good coloring, there is no $\widehat{F}_k$ in the extension to $H^v$ or in $H^w$, so any copy of $\widehat{F}_k$ must be induced by a set that contains $vw$. However, any such set, induces a coloring that is almost monochromatic (in which $v$ plays the role of the special vertex $x$). On the other hand, consider the coloring of $G-u$ where all edges are blue, with the exception of the edge $vw$, which is colored red.  Any pattern contained in this coloring is either monochromatic or almost monochromatic, and therefore is different from $\widehat{F}_k$. However, it is not in the image of $\phi$.
	
\noindent {\bf Case 2}: $\widehat{F}_k$ \emph{is} almost monochromatic and is different from the patterns $T_0, P_1, P_2, P_3$ of Figure~\ref{fig:somepatters}.  Let $\widehat{H^w}$ be such that all edges inside $H$ are blue and the ones from $w$ to $H$ are red. To define $\phi$, for any good coloring that extends $\widehat{H}$ to the edges between $v$ and $H$, extend it by coloring $vw$ with red. As before, we only need to check that any pattern that contains the edge $vw$ is not equal to $\widehat{F}_k$. Notice that here we must have $k\ge 4$ (as $\widehat{F}_k$ is almost monochromatic and different from $T_0$). Suppose that there is an almost monochromatic pattern that contains $vw$. Note that it must contain exactly two vertices of $H$, one of which is the image of the special vertex $x$. Because all edges in $F_k-x$ have the same color, $\widehat{F}_k$ must be equal to $P_1, P_2$ or $P_3$, a contradiction. To see that $\phi$ is not surjective, let all edges from $v$ to $H$ be red and the edge $vw$ be blue. It is easy to check that the only pattern which is almost monochromatic and is contained in this coloring is $T_0$. 

\noindent {\bf Case 3}: $\widehat{F}_k$ \emph{is} $P_1$ or $P_3$, given in Figure~\ref{fig:somepatters}. Let $\widehat{H^w}$ be such that all edges inside $H$ are blue and the ones from $w$ to $H$ are red. To define $\phi$, given a good coloring that extends $\widehat{H}$ to the edges between $v$ and $H$, extend it to $G-v$ by coloring $vw$ with blue. It is easy to see that this cannot produce $P_1$ or $P_3$ using $vw$. Again, this function $\phi$ is not surjective, as we may color all edges between $v$ and $H^w$ with blue and let $vw$ be red.
		
\noindent {\bf Case 4}: $\widehat{F}_k$ \emph{is} $P_2$, given in Figure~\ref{fig:somepatters}. In this case we assume $r\ge3$.  Let $\widehat{H^w}$ be such that all edges inside $H$ are blue and the ones from $w$ to $H$ are red. To define $\phi$, given a good coloring that extends $\widehat{H}$ to the edges between $v$ and $H$, extend it to $G-v$ by coloring $vw$ with a third color, say green. Clearly, any four vertices that contains $v$ and $w$ induce a pattern that uses at least three colors, and thus is not equal to $P_2$. Note that the extension of $\widehat{H^w}$ such that all edges from $v$ to $H^w$ are red does not contain the pattern $P_2$, so that $\phi$ is not surjective.

\begin{figure}
\begin{center}
\newcommand{\coloring}[4]{%
	\coordinate (A) at (-1.6, -0.25);
	\coordinate (B) at ( 1.6, 1.1);
	\draw (-2,0) node {$H$};
	\coordinate (p0) at (-1.2, 0.8);
	\coordinate (p1) at (-0.8, 0.3);
	\coordinate (p2) at (0, 0);
	\coordinate (p3) at (0.8, 0.3);
	\coordinate (p4) at (1.3, 0.8);

	\node[label=$v$, vertex] (v) at (-1,2.5){};
    \node[label=$w$, vertex] (w) at ( 1,2.5){};

	\draw[fill=#1!15] (A) rectangle (B) ;
	\draw[#1] (p0) -- (p1) -- (p2) -- (p3) -- (p4) -- (p0);
	\draw[#1] (p0) -- (p2) -- (p4) -- (p1) -- (p3) -- (p0);

    \draw[#2] (w) to [out=240, in=40] (p1);
    \draw[#2] (w) -- (p2);
    \draw[#2] (w) -- (p3);
	\draw[#2] (w) to [out=270, in=110] (p4);
    
    \draw[#3] (v) to [out=270, in=80] (p0);
    \draw[#3] (v) -- (p1);
    \draw[#3] (v) -- (p2);
    \draw[#3] (v) to [out=300, in=140] (p3);

    \draw[#4] (v) -- (w);
}

\def\colorHbox{blue}

\begin{tikzpicture}[text depth=0.25ex]
    \tikzset{vertex/.style={circle, fill=black!50, draw, inner sep=0pt}}

	\matrix[column sep=0.4cm,row sep=0.5cm]
	{
	    &
	    \node[text width=14em] (Extending) {Showing that $\phi$ is injective};&
	    \node[text width=12em] (Particular) {A coloring that is not in \newline the image of $\phi$};
	    \\
	    \node[text width=4.9cm, anchor=south] (Case1) at (0,1) {{\textbf Case 1:} Avoid patterns which are not almost monocromatic, neither monochromatic.};&
	    \coloring{\colorHbox}{blue}{dashed, ultra thick}{blue} &
	    \coloring{\colorHbox}{blue}{blue}{red, line width=1.6pt} &
	    \\
	    \node[text width=4.9cm, anchor=south] (Case2) at (0,1) {{\textbf Case 2:} Avoid patterns \newline which are almost monochromatic, except $T_0, P_1, P_2, P_3$.}; &
	    \coloring{\colorHbox}{red, line width=1.6pt}{dashed, ultra thick}{red, line width=1.6pt} &
	    \coloring{\colorHbox}{red, line width=1.6pt}{red, line width=1.6pt}{blue} &
	    \\
	    \node[text width=4.9cm, anchor=south] (Case3) at (0,1) {{\textbf Case 3:} Avoid $P_1$ and $P_3$.};&
	    \coloring{\colorHbox}{red, line width=1.6pt}{dashed, ultra thick}{blue} &
	    \coloring{\colorHbox}{red, line width=1.6pt}{blue}{red, line width=1.6pt} &
	    \\
	   	\node[text width=4.9cm, anchor=south] (Case3) at (0,1) {{\textbf Case 4:} Avoid $P_2$, for \mbox{$r\ge3$}.};&
	    \coloring{\colorHbox}{red, line width=1.6pt}{dashed, ultra thick}{green, ultra thick} &
	    \coloring{\colorHbox}{red, line width=1.6pt}{red, line width=1.6pt}{red, line width=1.6pt} &
	    \\
	};
\end{tikzpicture}
\end{center}
\caption{How to color and to extend a coloring in each case.}
\label{fig:cases}
\end{figure}
\end{proof}

\begin{remark}
Note that if $\widehat{F}_3$ is a rainbow coloring of $K_3$, then it is treated in Case~1 
 of Theorem~\ref{thm:nonmonoExtremalsAreMultipartite}. The proof that we gave here does not work for monochromatic pattern simply because our colorings of $H^w$ always contain monochromatic cliques.
\end{remark}


\section{The case of $3$-colorings - Auxiliary results}

In the remainder of this paper, we shall only be concerned with colorings with three colors. In Sections~\ref{sec:rainbow} and ~\ref{sec:twocolors}, our proofs will be based on the Regularity Method of Szemer\'{e}di together with some stability results. Here, we give the necessary definitions and state the main results that we shall use.

Given two disjoint non-empty sets of vertices $X$ and $Y$ of a graph $G$, we let $E(X,Y)$ denote the set of edges with one end in $X$ and the other one in $Y$. We also set $e(X,Y)=|E(X,Y)|$ and let $d(X, Y)=\frac{e(X,Y)}{|X||Y|}$ denote the edge density between $X$ and $Y$.
  
\begin{definition} \label{szem:76}
  Let $G=(V,E)$ be a graph and let $0 < \eps \le 1$. We say that a pair 
  $(A,B)$ of two disjoint subsets of $V$ is \emph{$\eps$-regular} (with 
  respect to $G$) if 
    $$
      |d(A',B')-d(A,B)| < \eps
    $$
  holds for any two subsets $A'\subset A$, $B'\subset B$ with
  $|A'| > \eps |A|$, $|B'| > \eps |B|$. 
 \end{definition}

\begin{definition}
Given a graph $G = (V,E)$, a partition $V = V_1\cup \ldots \cup V_t$ is called \emph{$\eps$-regular} (with respect to $G$) if: 
\begin{itemize}
	\item[(a)] $\lvert |V_i| - |V_j| \rvert \le 1$ for every $i, j \in \{1, \ldots, t\}$, and 
	\item[(b)] $(V_i, V_j)$ is $\eps$-regular for all but at most $\eps t^2$ of the pairs $(V_i, V_j)$ where $i\neq j$.
\end{itemize}
\end{definition}

In our proofs, we shall make use of a colored version of the Szemer\'{e}di Regularity Lemma~\cite{Sze78} stated in~\cite{KS}.
\begin{lemma}\label{colored_regularity}
For every $m,\eps>0$ and integer $r$, there exist $n_0$ and $M$ such that, if the edges of a graph $G$ of order $n\geq n_0$ are $r$-colored, say $E(G)=E_1\cup \cdots \cup E_r$, then there is a partition of the vertex set $V(G)=V_1\cup \cdots \cup V_t$ with $m\leq t\leq M$ which is $\eps$-regular simultaneously with respect to all graphs $G_i=(V,E_i)$ for $i=1,\ldots,r.$
\end{lemma}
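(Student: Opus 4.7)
The plan is to derive this multi-colored variant from the standard single-graph Regularity Lemma via an energy-increment argument carried out simultaneously over all $r$ color classes. For a partition $\mathcal{P} = \{V_1, \ldots, V_t\}$ of $V(G)$, I would define the combined index
\[
q(\mathcal{P}) \;=\; \sum_{i=1}^{r} \sum_{1 \le a < b \le t} \frac{|V_a||V_b|}{n^2}\, d_i(V_a,V_b)^2,
\]
where $d_i(A,B)=e_i(A,B)/(|A||B|)$ is the density of color-$i$ edges between $A$ and $B$. Since each summand lies in $[0,1]$, we have $0\le q(\mathcal{P}) \le r$. A standard Cauchy--Schwarz computation shows that each $q_i$, and hence $q$ itself, is non-decreasing under refinement of $\mathcal{P}$; this is the key monotonicity property that lets one handle all colors in a single iterative scheme.

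First I would start from an arbitrary equitable partition with at least $m$ parts. At each iteration, check whether the current partition is $\eps$-regular with respect to every $G_i$. If not, there is some color $i$ and a collection of more than $\eps t^2$ pairs $(V_a,V_b)$ failing $\eps$-regularity for $G_i$. Using the witnessing subsets to refine $\mathcal{P}$, the defect form of Cauchy--Schwarz (exactly as in the standard proof of Szemerédi's lemma) yields a refinement $\mathcal{P}'$ with
\[
q_i(\mathcal{P}') \;\ge\; q_i(\mathcal{P}) + \eps^5.
\]
Because refinement cannot decrease the index for any of the remaining colors, this gives $q(\mathcal{P}') \ge q(\mathcal{P}) + \eps^5$. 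Since $q$ is bounded above by $r$, the procedure must halt after at most $\lceil r/\eps^5\rceil$ iterations, producing a partition that is simultaneously $\eps$-regular with respect to all $G_i$. The resulting bound on the number of parts is a tower function of $r$, $\eps$, and $m$, from which $M$ and $n_0$ are extracted in the obvious way.

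One routine detail remains: after each refinement one must restore the equitability condition (a) by chopping each part into pieces of a common size and dumping the leftover vertices into an exceptional class, exactly as in the classical argument. The loss in $q$ caused by this equalization is $O(1/t)$, which is negligible compared with the $\eps^5$ gain once $t$ is large, so the iteration still terminates within the same number of steps. The main (and essentially only) obstacle is the bookkeeping required to run the refinement for all $r$ colors simultaneously while keeping the partition equitable; conceptually the argument is an almost verbatim adaptation of the monochromatic proof, which is why this colored version is usually invoked directly from \cite{KS}.
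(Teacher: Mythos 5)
The paper does not prove this lemma; it cites it directly from Koml\'os--Simonovits \cite{KS} as a known multicolor variant of Szemer\'edi's Regularity Lemma. Your energy-increment argument is the standard proof of that cited result: you sum the usual mean-square index over all $r$ colors so that it is still bounded (by $r$) and still monotone under refinement, and then a single irregular color yields the usual $\Omega(\eps^5)$ gain, so termination follows exactly as in the monochromatic case. This is correct and is essentially what one finds in \cite{KS}, so there is nothing to compare against in the paper itself.

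One small point worth flagging for completeness: when you say ``check whether the current partition is $\eps$-regular with respect to every $G_i$,'' the refinement step should be done \emph{simultaneously} for all colors that have too many irregular pairs (taking the common refinement of all the witness partitions), not one color at a time, since after equalization a refinement tailored to color $i$ could in principle need to be redone for color $j$. This is what makes the partition count a single tower in $r/\eps^5$ rather than something iterated $r$ times. Your bound on the number of iterations, $\lceil r/\eps^5 \rceil$, already implicitly assumes this simultaneous refinement, so the argument is fine; just make the bookkeeping explicit if you write it up in full.
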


A partition as in Lemma \ref{colored_regularity} will be called a \emph{multicolored $\eps$-regular partition}. Given such a partition and given a color $\sigma \in [r]$, we can define a \emph{cluster graph} associated with color $\sigma$ as follows. Given $\eta > 0$, the graph $R_\sigma = R_\sigma(\eta)$ is defined on the vertex set $[t]$ so that $\{i,j\} \in E(R_\sigma)$ if and only if $(V_i,V_j)$ is an $\eps$-regular pair with edge density at least $\eta$ with respect to the subgraph of $G$ induced by the edges of color $\sigma$.

 We may also define the \emph{multicolored cluster graph} $R$ associated with this partition: the vertex set is $[t]$ and $e = \{i,j\}$ is an edge of $R$ if $e \in E(R_\sigma)$ for some $\sigma \in [r]$. Each edge $e$ in $H$ is assigned the list of colors $L_{e} = \left\{\sigma \in [r] \, | \; e \in E(R_\sigma)\right\}$. Given a colored graph $\widehat{F}$, we say that a multicolored cluster graph $R$ contains $\widehat{F}$ if $R$ contains a copy of $F$ such that the color of each edge (with respect to $\widehat{F}$) is contained in the list of the corresponding edge in $R$. More generally, if $F$ is a graph with color pattern $P$, we say that $R$ contains $(F,P)$ if it contains some colored copy of $F$ with pattern $P$.

One of the main advantages of considering cluster graphs are embedding results that ensure that some substructure found within a cluster graph can also be found in the original graph. In the present work, the following embedding result will be particularly useful. It is stated in terms of $3$-colorings because of our setting, but the same statement would hold for $r$ colors. The proof is quite standard and follows the arguments in the proof of~\cite[Theorem~2.1]{KS}.
\begin{lemma} \label{lemma_embedding} For every $\eta > 0$ and every positive integer $k$, there exist $\varepsilon = \varepsilon (\eta, k) > 0$ and a positive integer $n_0(\eta, k)$ with the following property. Suppose that $G$ is a $3$-edge colored graph on $n > n_0$ vertices with a multicolored $\varepsilon$-regular partition $V = V_1 \cup \cdots \cup V_t$ which defines the multicolored cluster graph $R = R(\eta)$. Let $F$ be a fixed $k$-vertex graph with a prescribed color pattern $\widehat{F}$. If $R$ contains $\widehat{F}$, then the graph $G$ also contains $\widehat{F}$.	
\end{lemma}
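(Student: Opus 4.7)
The plan is to carry out the standard greedy embedding (``Key Lemma'') argument of the regularity method, adapted to the multicolored setting. Suppose $R$ contains $\widehat{F}$, witnessed by clusters $V_{i_1},\ldots,V_{i_k}$ of the partition, where for each edge $\{s,t\}\in E(F)$ the corresponding pair $(V_{i_s},V_{i_t})$ is $\varepsilon$-regular with respect to color $\sigma_{st}$ (the color prescribed by $\widehat{F}$ on that edge) and has density at least $\eta$ in that color. The goal is to produce vertices $v_j\in V_{i_j}$ so that $v_sv_t$ has color $\sigma_{st}$ for every edge $\{s,t\}\in E(F)$, which is exactly a copy of $\widehat{F}$ in $G$.

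The vertices will be picked one at a time. After $v_1,\ldots,v_s$ have been chosen, define for each $j>s$ the candidate set $W_j^{(s)}\subseteq V_{i_j}$ consisting of those $w\in V_{i_j}$ such that for every $t\le s$ with $\{t,j\}\in E(F)$ the edge $v_t w$ has color $\sigma_{tj}$ in $G$; set $W_j^{(0)}=V_{i_j}$. I would prove inductively the quantitative bound $|W_j^{(s)}|\ge (\eta-\varepsilon)^s |V_{i_j}|$. The inductive step uses the basic consequence of $\varepsilon$-regularity: if $(A,B)$ is $\varepsilon$-regular in some color with density at least $\eta$, and $B'\subseteq B$ satisfies $|B'|>\varepsilon|B|$, then all but at most $\varepsilon|A|$ vertices of $A$ have more than $(\eta-\varepsilon)|B'|$ neighbors of the specified color in $B'$. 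Applied with $A=V_{i_{s+1}}$, $B=V_{i_j}$, $B'=W_j^{(s)}$ for each edge $\{s{+}1,j\}\in E(F)$ with $j>s+1$, this excludes at most $\varepsilon|V_{i_{s+1}}|$ vertices per edge; over the at most $k$ such edges we forbid at most $k\varepsilon|V_{i_{s+1}}|$ vertices in total. Thus provided $|W_{s+1}^{(s)}|>k\varepsilon|V_{i_{s+1}}|$ one can select $v_{s+1}$ outside every bad set, and then $|W_j^{(s+1)}|\ge(\eta-\varepsilon)|W_j^{(s)}|$ as required.

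To make the induction work all the way up to $s=k-1$, I would choose $\varepsilon=\varepsilon(\eta,k)$ small enough that $(\eta-\varepsilon)^{k-1}>k\varepsilon$ (so both the hypothesis $|W_j^{(s)}|>\varepsilon|V_{i_j}|$ needed to invoke regularity and the condition $|W_{s+1}^{(s)}|>k\varepsilon|V_{i_{s+1}}|$ hold at every step), and choose $n_0(\eta,k)$ large enough that each cluster $V_i$ has size at least, say, $\varepsilon^{-2}$, so that the quantities $k\varepsilon|V_{i_{s+1}}|$ are genuinely greater than $1$ and the greedy choice is nonvacuous. Note $|V_i|\ge\lfloor n/t\rfloor\ge n/M-1$ where $M$ is the constant from Lemma~\ref{colored_regularity}, so this is achieved by taking $n_0$ large in terms of $M$ and $\varepsilon$.

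The only delicate point is the bookkeeping: one must carefully iterate the regularity estimate using the restricted sets $W_j^{(s)}$ rather than the full clusters, checking that the density-deficit bound $\eta-\varepsilon$ persists at each stage and that the chosen $\varepsilon$ dominates all the accumulated error terms. Since this adaptation is precisely analogous to the single-color proof of the Key Lemma in~\cite{KS} and each color class is handled by its own instance of $\varepsilon$-regularity supplied by Lemma~\ref{colored_regularity}, no essentially new difficulty arises in the multicolored case, and the argument concludes with the desired embedded copy of $\widehat{F}$ in $G$.
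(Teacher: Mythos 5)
Your proposal is correct and takes essentially the same approach as the paper, which simply appeals to the standard greedy embedding argument (the ``Key Lemma'' of Koml\'{o}s--Simonovits) adapted to the colored cluster graph; your elaboration of that argument, including the bookkeeping on the candidate sets and the choice of $\varepsilon$ with $(\eta-\varepsilon)^{k-1}>k\varepsilon$, matches what a full write-out of that standard proof would contain.
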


The following classical stability result will also be used in our proofs.
\begin{theorem}\cite{erdos_stability,simonovits_stability}\label{simonovits_stability}
For every $\alpha>0$ there exist $\beta>0$ and $n_0$ such that any $K_k$-free graph on $n \geq n_0$ vertices with at least $\ex(n,K_k)-\beta n^2$ edges has a partition $V=V_1 \cup \cdots \cup V_{k-1}$ of the vertex set with $\sum e(V_i)<\alpha n^2$.
\end{theorem}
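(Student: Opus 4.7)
I would prove this by induction on $k$, following the classical Erd\H{o}s--Simonovits argument. The base case $k = 3$ (triangle-free graphs) can be handled directly: for each edge $uv$, the neighborhoods are disjoint, so $d(u) + d(v) \le n$. Summing over edges gives $\sum_v d(v)^2 = \sum_{uv \in E}(d(u)+d(v)) \le n\cdot e(G)$, while Cauchy--Schwarz yields $\sum_v d(v)^2 \ge (2e(G))^2/n$, jointly recovering Tur\'{a}n's bound $e(G) \le n^2/4$. If $e(G) \ge n^2/4 - \beta n^2$, both inequalities must be tight to within $O(\sqrt{\beta})n^2$, which forces most degrees to be close to $n/2$ and most adjacent pairs $u,v$ to satisfy $|N(u) \cup N(v)| \ge n - O(\sqrt{\beta})n$. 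Picking a vertex $v$ of maximum degree, the partition $(N(v), V\setminus N(v))$ then witnesses the statement: $N(v)$ is independent, and the above forces the number of edges inside $V \setminus N(v)$ to be smaller than $\alpha n^2$ for $\beta$ small enough.

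For the inductive step from $k-1$ to $k$, assume the conclusion for $K_{k-1}$ with parameter $\alpha' := \alpha/2$ and some corresponding $\beta'$, and let $G$ be $K_k$-free on $n$ vertices with $e(G) \ge \ex(n,K_k) - \beta n^2$. First I would iteratively delete every vertex of degree below $\left(1 - \tfrac{1}{k-1} - \eta\right)n$ for an appropriately chosen $\eta = \eta(\beta) \to 0$; since each deletion drops the edge count by at least $\eta n$ but the total deficit from the Tur\'{a}n bound is only $O(\beta n^2)$, at most $O(\beta/\eta)\cdot n$ vertices are removed. The remaining subgraph $G'$ on $n' = (1 - o(1))n$ vertices is still $K_k$-free and satisfies $e(G') \ge \ex(n',K_k) - \beta_1 n'^2$ with $\beta_1 \to 0$ as $\beta \to 0$.

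Now pick any vertex $v \in V(G')$ and set $N := N_{G'}(v)$. Then $|N| \ge \left(1 - \tfrac{1}{k-1} - \eta\right)n'$ and $G'[N]$ is $K_{k-1}$-free. Writing $e(G') = e(G'[N]) + e(N, V(G')\setminus N) + e(G'[V(G')\setminus N])$ and bounding the latter two terms trivially by $|N|\cdot|V(G')\setminus N|$ and $\binom{|V(G')\setminus N|}{2}$ respectively, one obtains $e(G'[N]) \ge \ex(|N|,K_{k-1}) - \beta_2 |N|^2$ for some $\beta_2 = \beta_2(\beta) \to 0$. Applying the inductive hypothesis to $G'[N]$ yields a partition $N = N_1 \cup \cdots \cup N_{k-2}$ with $\sum_i e(G'[N_i]) < \alpha' |N|^2$. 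Setting $V_{k-1}$ to be $V(G) \setminus N$ (absorbing the initially deleted vertices), the collection $(N_1, \ldots, N_{k-2}, V_{k-1})$ is the desired $(k-1)$-partition of $V(G)$, and the residual edges inside $V_{k-1}$ are controlled because $|V_{k-1}| \approx n/(k-1)$ and any excess here would, combined with the near-complete bipartite density between $V_{k-1}$ and each $N_j$, contradict $e(G) \le \ex(n,K_k)$.

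The principal obstacle is the careful propagation of the quantitative parameters across the induction: at each level one must choose $\eta, \beta_1, \beta_2$ in terms of the next target $\alpha'$ so that the recursion closes uniformly in $k$, and the final $\beta = \beta(\alpha,k)$ is positive. A secondary technical point is bounding $e(G[V_{k-1}])$, which is not immediate from cardinality alone; one must revisit the edge-deficit inequality and exploit that an excess of edges in $V_{k-1}$ would allow one to find $K_k$ using $v$ together with carefully chosen representatives from each $N_j$, a step where most of the remaining technical care is required.
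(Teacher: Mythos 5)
The paper does not actually prove Theorem~\ref{simonovits_stability}; it cites it from Erd\H{o}s and Simonovits, so your argument has to stand on its own. Your base case $k=3$ is essentially fine (and can be tightened: with $v$ of maximum degree, $A=N(v)$, $B=V\setminus A$, the identity $e(A,B)+2e(B)=\sum_{u\in B}d(u)\le |B|\,|A|$ together with $e(G)=e(A,B)+e(B)\ge n^2/4-\beta n^2$ gives $e(B)\le |A||B|-n^2/4+\beta n^2\le \beta n^2$ directly). The inductive step, however, has a genuine quantitative gap exactly where you try to feed $G'[N]$ to the inductive hypothesis. You lower-bound $e(G'[N])$ by subtracting the trivial bounds $e(N,\bar N)\le |N|\,|\bar N|$ and $e(\bar N)\le\binom{|\bar N|}{2}$ from $e(G')$. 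With $|N|\approx\frac{k-2}{k-1}n'$ this yields $e(G'[N])\gtrsim \frac{k^2-5k+5}{2(k-1)^2}n'^2$, whereas $\ex(|N|,K_{k-1})\approx\frac{k^2-5k+6}{2(k-1)^2}n'^2$; the shortfall is $\frac{1}{2(k-1)^2}n'^2$ --- precisely the $\binom{|\bar N|}{2}$ term --- which is a fixed positive fraction of $n^2$, not $O(\beta)n^2$. So you cannot conclude $e(G'[N])\ge \ex(|N|,K_{k-1})-\beta_2|N|^2$ and the induction does not close. The minimum-degree cleaning does not rescue this: a large minimum degree gives no upper bound on $e(\bar N)$.

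The standard repair is to take $v$ of \emph{maximum} degree, so $\Delta=|N|$, and use $e(N,\bar N)+2e(\bar N)=\sum_{u\in\bar N}d(u)\le|\bar N|\Delta=|N||\bar N|$. Then $e(G)\le e(G[N])+|N||\bar N|-e(\bar N)$, and since $\ex(a,K_{k-1})+a(n-a)\le \ex(n,K_k)$ for every $a$ (the graph obtained from $T_{k-2}(a)$ by joining it completely to $n-a$ further vertices is $K_k$-free), one obtains simultaneously $e(\bar N)\le\beta n^2$ and $e(G[N])\ge \ex(|N|,K_{k-1})-2\beta n^2$, which is exactly what both the application of the inductive hypothesis and your final bound on $e(V_{k-1})$ require. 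Relatedly, your closing claim that an excess of edges inside $V_{k-1}$ would ``contradict $e(G)\le\ex(n,K_k)$'' is not an edge count: a bound on $e(G)$ alone cannot upper-bound $\sum_i e(V_i)$ without a lower bound on the cross edges, so that step too needs the max-degree inequality (or an explicit $K_k$-embedding), not cardinality arithmetic. Alternatively, F\"uredi's short proof via Erd\H{o}s' degree majorization gives $\sum_i e(V_i)\le \ex(n,K_k)-e(G)\le\beta n^2$ in a few lines and avoids the induction entirely.
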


We will also need the entropy function, which we will denote by $H(x)$, and is defined as $H(x)=-x\log_2(x)-(1-x)\log_2(1-x)$, for $0<x<1$. It will be useful for the well known estimate
\begin{eqnarray}\label{entropy}
\binom{a}{xa} \leq 2^{H(x)a}.
\end{eqnarray}
Note that $\lim_{x \rightarrow 0^+} H(x)=0$.


\section{$3$-colorings avoiding a rainbow triangles}
\label{sec:rainbow}

Throughout this section we let $\widehat{F}_3$ be a $3$-colored rainbow $K_3$, that is, one in which all edges have different colors. Here, we will use the Regularity Method to show that every $(3, \widehat{F}_3)$-extremal graph is an `almost complete' graph. Recall that we already know that it is a complete multipartite graph.

For reasons that will be clear later, we will need to solve the problem of maximizing the value $w(G)$ defined below, over the set of graphs with a given number of vertices.

\begin{definition}
\label{def:w}
Given a graph $G$, let $w:E(G)\to\{2, 3\}$ be the function that gives weight $2$ or $3$ to the edges of $G$ in such a way that every edge that belongs to some triangle gets weight 2 and all the remaining edges get weight $3$. Define $w(G)$ to be the product of the weight of the edges of $G$. 
\end{definition}

The following lemma tells us that for a given number of vertices, the value of $w(G)$ is maximum when $G$ is a complete graph.

\begin{lemma} \label{lemma:GraphMaxW} Given a graph $G$ on $t$ vertices, the function $w(G)$ defined above satisfies $w(G) \leq 2^{\binom{t}{2}}$.
\end{lemma}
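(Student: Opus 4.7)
The plan is to induct on $t$, with the base cases for small $t$ handled by direct computation (and also because the integer thresholds in the inductive step can leave a gap for a few exceptional values).

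For the inductive step, let $G$ be a graph on $t$ vertices and let $v$ be a vertex of minimum degree $\delta$; set $G'=G-v$. The key monotonicity observation is that, for every edge $e\in E(G')$, deleting $v$ can only destroy triangles containing $e$, never create them, so $w_G(e)\le w_{G'}(e)$. Consequently $\prod_{e\in E(G')}w_G(e)\le w(G')$, and combining this with the at most $3^\delta$ factor coming from the $\delta$ edges incident to $v$, the inductive hypothesis yields
\[
w(G)\;\le\;3^\delta\cdot w(G')\;\le\;3^\delta\cdot 2^{\binom{t-1}{2}}.
\]

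I would then split into two cases according to $\delta$. In the low-degree case, $\delta\le(t-1)\log_3 2$ gives $3^\delta\le 2^{t-1}$, so the estimate above already becomes $w(G)\le 2^{\binom{t-1}{2}+(t-1)}=2^{\binom{t}{2}}$. In the complementary high-degree case, every vertex has degree at least $\lceil(t+1)/2\rceil$, so for every edge $uv$ one has $\deg(u)+\deg(v)\ge t+1$; by inclusion-exclusion $|N(u)\cap N(v)|\ge \deg(u)+\deg(v)-t\ge 1$, which means that every edge of $G$ belongs to some triangle. Thus $w(G)=2^{|E(G)|}\le 2^{\binom{t}{2}}$ with no need to invoke the inductive hypothesis at all.

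The main technical point (and the main obstacle in cleanly stating the induction) is to check that the two cases above are jointly exhaustive, i.e., that $\lfloor(t-1)\log_3 2\rfloor+1\ge\lceil(t+1)/2\rceil$. The underlying analytic inequality $(t-1)\log_3 2\ge (t-1)/2$ reduces to $\log_3 2\ge 1/2$, equivalently $4\ge 3$, and therefore holds comfortably for all sufficiently large $t$; the finitely many small values of $t$ not covered can be absorbed into the base cases, where the triangle-free subcase is controlled by the Tur\'an bound $\ex(t,K_3)=\lfloor t^2/4\rfloor$ (which gives $w(G)\le 3^{\lfloor t^2/4\rfloor}\le 2^{\binom{t}{2}}$ once $t$ is large enough) and the remaining graphs are handled by direct inspection.
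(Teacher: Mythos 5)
Your argument takes a genuinely different route from the paper's. The paper runs a Zykov-style symmetrization on a $w$-maximizing graph $G$: it first shows that any two non-adjacent vertices must carry the same vertex-weight (otherwise deleting the lighter one and cloning the heavier one strictly increases $w$), and then uses this to argue that if $G$ has a weight-$3$ edge it must be complete bipartite, or else degree-regular with $d_2+d_3\leq t/2$; in both sub-cases it concludes $w(G)\leq 3^{\ex(t,K_3)}<2^{\binom{t}{2}}$, contradicting maximality. You instead delete a minimum-degree vertex $v$ and trade $3^{\delta}$ against $2^{t-1}$ when $\delta\leq(t-1)\log_3 2$, with the complementary observation that once $\delta(G)\geq\lceil(t+1)/2\rceil$ every edge lies in a triangle and all weights collapse to $2$. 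The core inequalities ($w_G(e)\leq w_{G-v}(e)$ for $e\in E(G-v)$, and $3^\delta\leq 2^{t-1}$ in the low-degree case) are correct, the common-neighbour count is right, and the inductive strategy is arguably more elementary than the paper's variational one. As you anticipate, the dichotomy is exhaustive only once $t$ is large enough; a short computation shows it holds for $t=3$ and all $t\geq 5$, and fails precisely at $t=4$ (where $\delta=2$ slips through both cases).

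There is, however, a genuine gap, and — interestingly — the paper's own appendix proof has the same one: the lemma as stated is false for $t\leq 4$. The four-cycle $C_4$ is triangle-free with four edges, so $w(C_4)=3^4=81>64=2^{\binom{4}{2}}$; likewise $w(P_3)=9>8=2^{\binom{3}{2}}$ and $w(K_2)=3>2$. These are exactly the values of $t$ for which $3^{\ex(t,K_3)}<2^{\binom{t}{2}}$ reverses, which is the key inequality the paper invokes. Consequently your "direct inspection" of the small base cases would produce counterexamples rather than confirmations; your induction cannot be seeded at $t=2$ (the bound already fails there), and even at $t=5$ the low-degree case would appeal to the false $t=4$ bound. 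The clean repair is to state and prove the lemma only for $t\geq 5$: verify $t=5$ directly as the base case, and run your induction from $t=6$ onward, where the dichotomy is exhaustive and the inductive hypothesis is always invoked at some $t'\geq 5$. Since the lemma is applied only to cluster graphs whose order can be taken as large as one likes, this restriction is harmless for the paper's results, but the statement itself needs the hypothesis $t\geq 5$ (and the paper's parenthetical claim that its proof "works for all values of $t$" is an overstatement).
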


Our proof of Lemma~\ref{lemma:GraphMaxW} (which works for all values of $t$) is based, again, on Zykov's Symmetrization. The next lemma is a more general result, but works only for large values of $t$. Since, in this article, we will only need results for large values of $t$, we postpone the proof of Lemma~\ref{lemma:GraphMaxW} to the appendix.

\begin{lemma}\label{lemma:stabilityforw} Let $G$ be a graph on $t > 1000$ vertices. Attribute weights to the edges of $G$ as in Definition \ref{def:w}. For $i \in \{2, 3\}$, let $e_i$ be the number of edges of weight $i$ and let $\bar{e}$ be the number of edges in the complement $\bar{G}$ of $G$. If $\bar{e} \le t^2/4$, then $w(G) = 2^{e_2}3^{e_3} \le 2^{\binom{t}{2}}2^{-0.16\bar{e}}$. Furthermore, if $\bar{e} > t^2/4$ then $w(G) < 3^{t^2/4} \ll 2^{\binom{t}{2}}$. 
\end{lemma}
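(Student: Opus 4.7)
The plan is to reduce the first inequality $w(G) \le 2^{\binom{t}{2}} 2^{-0.16\bar{e}}$ to the estimate $e_3 \le \alpha \bar{e}$ where $\alpha := 0.84/\log_2(3/2) \approx 1.436$; since $w(G) = 2^{e_2+e_3}(3/2)^{e_3} = 2^{\binom{t}{2}-\bar{e}}(3/2)^{e_3}$, the two are equivalent. I then intend to prove this bound by three complementary estimates on $e_3$, each optimal in a different range of $\bar{e}$, stitched together into a case analysis.

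The structural backbone rests on three observations. First, $G_3$ (the subgraph formed by the weight-3 edges) is triangle-free: a triangle of $G$ has all three of its edges in that triangle, hence of weight~$2$. By Tur\'an this already gives $e_3 \le t^2/4$. Second, if $uv$ is a weight-3 edge of $G$ then $N_G(u) \cap N_G(v) = \emptyset$, so $d(u) + d(v) \le t$, equivalently $\bar{d}(u) + \bar{d}(v) \ge t-2$, and in particular $\min\{d(u), d(v)\} \le t/2$. Third, let $L := \{v : d(v) \le t/2\} = \{v : \bar{d}(v) \ge (t-2)/2\}$; from $\sum_v \bar{d}(v) = 2\bar{e}$ we get $|L| \le 4\bar{e}/(t-2)$. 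Writing $d_3(v)$ for the number of weight-3 edges at $v$, the second observation shows that each weight-3 edge has at least one endpoint in $L$, and that for $v \notin L$ every weight-3 neighbour of $v$ lies in $L$; hence $d_3(v) \le |L|$ for $v \notin L$, while $d_3(v) \le d(v) \le t/2$ for $v \in L$. Consequently $\max_v d_3(v) \le \max(t/2, |L|)$. Summing $\bar{d}(u) + \bar{d}(v) \ge t-2$ over $uv \in E(G_3)$ yields the key double count
\[
(t-2)\, e_3 \;\le\; \sum_v \bar{d}(v)\, d_3(v) \;\le\; 2\bar{e} \cdot \max_v d_3(v).
\]

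Three cases will now cover every $\bar{e} \in (0, t^2/4]$ for $t > 1000$. In the \emph{small range} $\bar{e} \le t(t-2)/8$, one has $|L| \le t/2$ and hence $\max d_3 \le t/2$, which yields $e_3 \le t \bar{e}/(t-2) < 1.01\, \bar{e} \le \alpha \bar{e}$. In the \emph{medium range} $t(t-2)/8 < \bar{e} \le 0.1795\, (t-2)^2$, one only has $\max d_3 \le 4\bar{e}/(t-2)$, yielding $e_3 \le 8\bar{e}^2/(t-2)^2$; the upper bound on $\bar{e}$ is precisely what makes this $\le \alpha \bar{e}$. In the \emph{large range} $\bar{e} \ge 0.1742\, t^2$, one uses the Tur\'an bound $e_3 \le t^2/4$: then $(3/2)^{e_3} \le 2^{(t^2/4)\log_2(3/2)} \le 2^{0.84\bar{e}}$ directly (bypassing $\alpha$). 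For $t > 1000$ one checks that $0.1795\, (t-2)^2 > 0.1742\, t^2$, so the three intervals overlap and leave no gap.

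The ``furthermore'' clause is immediate: if $\bar{e} > t^2/4$ then $e(G) = \binom{t}{2} - \bar{e} < t(t-2)/4$, so $w(G) \le 3^{e(G)} < 3^{t^2/4} = 2^{(t^2 \log_2 3)/4}$, which is much smaller than $2^{\binom{t}{2}}$ since $\log_2 3 < 2$. The main obstacle in the first part is the medium range: a direct application of Cauchy--Schwarz to $\sum \bar{d}(v) d_3(v)$ only gives $e_3 \le 2\bar{e}$, which is far too weak for $\alpha \approx 1.436$. The crucial idea is that weight-3 edges are localized on the small set $L$: this lets us replace the uniform bound $t/2$ on $d_3$ by the much smaller $|L| \le 4\bar{e}/(t-2)$ whenever $\bar{e}$ is not tiny, and combining this localization with the Tur\'an bound for large $\bar{e}$ is what closes the argument.
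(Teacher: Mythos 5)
Your argument is correct, and it rests on the same double count as the paper's proof — the quantity $T = \sum_v \bar d(v)\,d_3(v)$ counting pairs consisting of a weight-3 edge and an incident non-edge — with the same lower bound $T \ge (t-2)e_3$ derived from $N(u)\cap N(v)=\emptyset$ for each weight-3 edge $uv$. Where you diverge is the upper bound on $T$. The paper observes that for any vertex $a$ the set $N_3(a)$ is an \emph{independent} set (any edge inside it would put a weight-3 edge into a triangle), so it contributes $\binom{|N_3(a)|}{2}$ non-edges; since $\bar e \le t^2/4$, this forces $|N_3(a)| \le t/\sqrt{2}+1$ uniformly, giving $T \le (t\sqrt{2}+2)\bar e$ and hence $e_3 \le (\sqrt{2}+0.01)\bar e \approx 1.424\,\bar e$ in a single step, with no case split. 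You instead control $\max_v d_3(v)$ via the localization $N_3(v)\subseteq L$ for $v\notin L$, where $L$ is the low-degree set of size $\le 4\bar e/(t-2)$, combined with the trivial bound $t/2$; this yields $\max(t/2,\,4\bar e/(t-2))$, which is weaker than $t/\sqrt 2+1$ when $\bar e$ is near $t^2/4$, so you are forced into the three-range analysis and a separate appeal to Tur\'an's theorem ($e_3 \le t^2/4$) for the top range. The stitching works because $0.1795(t-2)^2 > 0.1742\,t^2$ for $t>1000$ and because you aim for the slightly looser target $\alpha \approx 1.436$ rather than the paper's $1.424$. In short: same double count, different way to bound $d_3(v)$ — the paper extracts the bound from the independence of $N_3(a)$ and the global non-edge budget $\bar e \le t^2/4$, while you extract it from a degree-sequence (Markov) argument plus Tur\'an, at the cost of a case analysis. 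Both buy the lemma; the paper's route is shorter and numerically slightly tighter, while yours has the pedagogical merit of making explicit \emph{where} the weight-3 edges live.
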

\begin{proof}

Let $G$ be a graph such that $\bar{e} \le t^2/4$.
We will double count the number of pairs $(uv, ab)$ where $uv$ is an edge of weight 3 of $G$ and $ab$ is an edge in the complement of $G$ and $\{u,v\}\cap\{a,b\} \neq \emptyset$. Let $T$ be the number of such pairs.

For every edge $uv$ of weight 3, we must have $N(u)\cap N(v) = \emptyset$. Therefore, $d(u)+d(v) \le t$. This implies that $\bar{d}(u)+\bar{d}(v) \ge 2(t-1)-t = t-2$. Therefore, there are at least $t-2$ edges $ab$ of $\bar{G}$ which are incident with $uv$. This implies that $T \ge e_3(t-2)$.

Now, for each non-edge $ab$, we want to bound the number of edges of weight 3 which are incident with $a$ or $b$. That is, we want an upper bound on $|N_3(a)|+|N_3(b)|$ (noting that we are counting edges and not the number of vertices in $N_3(a)\cup N_3(b)$). We claim that for every $a \in V(G)$, we have that $|N_3(a)| \le t/\sqrt{2}+1$. In fact, since the edges $ua$ where $u \in N_3(a)$ have weight 3, they do no belong to any triangle and therefore $N_3(a)$ is an independent set. This implies that $$\frac{(|N_3(a)|-1)^2}{2} \le \binom{|N_3(a)|}{2} \le \bar{e} \le \frac{t^2}{4}.$$ Therefore, $|N_3(a)| \le t/\sqrt{2}+1$ as desired. The same bound holds for $|N_3(b)|$. It follows that $|N_3(a)|+|N_3(b)| \le t\sqrt{2} + 2$. This implies that $T \le (t\sqrt{2} + 2)\bar{e}$.

Comparing the upper bound and the lower bound for $T$, we have that: $(t\sqrt{2} + 2)\bar{e} \ge e_3(t-2)$, which implies $e_3 \le (\sqrt{2}+\frac{2\sqrt{2}+2}{t-2})\bar{e} < (\sqrt{2}+0.01)\bar{e}$. We conclude that
$$2^{e_2}3^{e_3} = 2^{e_2+e_3+\bar{e}} \left(\frac{3}{2}\right)^{e_3}\frac{1}{2^{\bar{e}}} = 2^{\binom{t}{2}} 2^{\log_2(3/2)e_3-\bar{e}} \le 2^{\binom{t}{2}} 2^{(\log_2(3/2)(\sqrt{2}+0.01)-1)\bar{e}} \le 2^{\binom{t}{2}} 2^{(0.84-1)\bar{e}}.$$

Finally, we note that the case where $\bar{e} > t^2/4$ is trivial.
\end{proof}

The result below establishes two approximate results about $(3,\widehat{F}_3)$-extremal graphs.

\begin{theorem}\label{thm:approx_rainbow}
The following hold for the rainbow triangle $\widehat{F}_3$.
\begin{itemize}
\item[(a)] For all $\delta>0$ there exists $n_0$ such that, if $G$ is a graph of order $n>n_0$, then $c_{3,\widehat{F}_3}(G) \le 2^{(1+\delta)n^2/2}$.
\item[(b)] For all $\xi>0$, there exists $n_1$ such that, if $G$ is a graph of order $n>n_1$ and $c_{3,\widehat{F}_3}(G) \geq 2^{\binom{n}{2}}$, then $|E(G)| \geq \binom{n}{2}-\xi n^2$.
\end{itemize}
\end{theorem}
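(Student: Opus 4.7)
The plan is to prove both parts simultaneously via the multicolored regularity method, reducing the counting to an extremal problem on the cluster graph that is controlled by Lemma~\ref{lemma:GraphMaxW}. Fix $\delta>0$ (for (a)) or $\xi>0$ (for (b)), and choose $\eps,\eta>0$ sufficiently small and $m\ge 1000$ sufficiently large depending on them. For every $\widehat{F}_3$-free coloring $\chi$ of $G$, apply Lemma~\ref{colored_regularity} to obtain a multicolored $\eps$-regular partition $V=V_1\cup\cdots\cup V_t$ with $m\le t\le M$, and let $R$ together with its lists $L_e\subseteq\{1,2,3\}$ of dense colors be the associated multicolored cluster graph. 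By Lemma~\ref{lemma_embedding}, the list-colored $R$ cannot contain any triangle admitting a system of distinct representatives from its lists, as otherwise $G$ would contain a rainbow triangle. A standard computation bounds, for each fixed partition and cluster graph, the number of colorings realizing them by $2^{o(n^2)}\cdot\prod_{(V_i,V_j)\in E(R)}|L_e|^{e(V_i,V_j)}$, where the error term collects the intra-cluster edges, irregular pairs, non-$R$ pairs, the at-most-$\eta$ density ``rare-color'' edges inside $R$-pairs, and the at most $M^n\cdot 2^{O(M^2)}$ choices of a (partition, cluster graph) pair.

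The key combinatorial claim is that $\prod_{e\in E(R)}|L_e|\le 2^{\binom{t}{2}}$ for any list configuration on $[t]$ (with $t$ large) admitting no rainbow triangle. Let $R^*=\{e\in E(R)\colon |L_e|\ge 2\}$. If $\{ij,ik,jk\}$ is a triangle of $R^*$, then applying Hall's theorem to the three lists (each of size $\ge 2$) shows that, since no SDR exists, the only surviving Hall violator is $|L_{ij}\cup L_{ik}\cup L_{jk}|\le 2$; combined with each list having size $\ge 2$, this forces all three lists to coincide with a common $2$-element set. Hence every edge of $R^*$ lying in a triangle of $R^*$ has $|L_e|=2$, so $|L_e|\le w(e)$ for every $e\in R^*$, where $w$ is the weighting of Definition~\ref{def:w} applied to $R^*$. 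By Lemma~\ref{lemma:GraphMaxW} applied to $R^*$ on $t$ vertices,
\[
\prod_{e\in E(R)}|L_e|=\prod_{e\in E(R^*)}|L_e|\le w(R^*)\le 2^{\binom{t}{2}}.
\]

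For part (a) this yields the result immediately: using $e(V_i,V_j)\le (n/t)^2(1+o(1))$ together with the claim gives $\prod_e|L_e|^{e(V_i,V_j)}\le \bigl(\prod_e|L_e|\bigr)^{(n/t)^2(1+o(1))}\le 2^{n^2/2+o(n^2)}$, so $c_{3,\widehat{F}_3}(G)\le 2^{(1+\delta)n^2/2}$ once the error terms are chosen smaller than $\delta n^2/2$.

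For part (b) we keep the dependence on the true edge counts. For $e\in R^*$ we write $|L_e|^{e(V_i,V_j)}\le w(e)^{|V_i||V_j|}\cdot 2^{-(|V_i||V_j|-e(V_i,V_j))}$, yielding
\[
c_{3,\widehat{F}_3}(G)\le 2^{o(n^2)}\cdot w(R^*)^{(1+o(1))(n/t)^2}\cdot 2^{-N^*(G)},
\]
where $N^*(G)$ counts the non-edges of $G$ lying in $R^*$-pairs. Assuming for contradiction that $|E(G)|<\binom{n}{2}-\xi n^2$, set $\alpha=\xi/3$ and split on $\bar e(R^*):=\binom{t}{2}-|R^*|$. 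If $\bar e(R^*)\le\alpha t^2$, almost all non-edges of $G$ lie in $R^*$-pairs, so $N^*(G)\ge(\xi-\alpha)n^2-o(n^2)$; combined with $w(R^*)\le 2^{\binom{t}{2}}$ this gives $c_{3,\widehat{F}_3}(G)\le 2^{n^2/2-2\xi n^2/3+o(n^2)}<2^{\binom{n}{2}}$ for $n$ large. If $\bar e(R^*)>\alpha t^2$, Lemma~\ref{lemma:stabilityforw} gives $w(R^*)\le 2^{\binom{t}{2}-0.16\alpha t^2}$ (or the sharper second bound when $\bar e(R^*)>t^2/4$), hence $c_{3,\widehat{F}_3}(G)\le 2^{n^2/2-0.16\alpha n^2+o(n^2)}<2^{\binom{n}{2}}$ for $n$ large. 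Either case contradicts the hypothesis. The main obstacle is the combinatorial claim in the second paragraph: isolating the auxiliary graph $R^*$ and recognising through Hall's theorem that each triangle of $R^*$ forces three identical $2$-element lists is precisely what aligns the list-size bound with the weighting of Definition~\ref{def:w}, so that Lemma~\ref{lemma:GraphMaxW} yields the tight bound $\prod|L_e|\le 2^{\binom{t}{2}}$ and its stability version Lemma~\ref{lemma:stabilityforw} drives part (b).
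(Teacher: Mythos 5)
Your proof is correct and follows essentially the same route as the paper: regularity lemma to reduce to the multicolored cluster graph, bound the product of list sizes by $2^{\binom{t}{2}}$ via Lemma~\ref{lemma:GraphMaxW}, and for part (b) split on whether the cluster graph is nearly complete and invoke Lemma~\ref{lemma:stabilityforw}. One genuine improvement: where the paper justifies $1^{e_1}2^{e_2}3^{e_3}\le w(R)$ only by noting that no triangle has all three edges in $E_3$ (which does not by itself give the needed edge-wise bound $|L_e|\le w(e)$, since an $E_3$-edge could lie in a triangle of $R$ whose other edges have small lists), your restriction to $R^*=\{e:|L_e|\ge 2\}$ combined with the Hall's-theorem observation that any SDR-free triangle of $R^*$ forces all three lists to be a common $2$-element set gives a clean, airtight justification of $\prod_e|L_e|\le w(R^*)\le 2^{\binom{t}{2}}$ and aligns exactly with the weighting of Definition~\ref{def:w}.
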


\begin{proof}

For part (a), fix $\delta>0$ and consider $\eta>0$ such that $2\eta+H(\eta) \leq \delta/3$.
For this value of $\eta$, set $n_0'$ and $\eps$ given by Lemma~\ref{lemma_embedding}, where we further assume that $\eps<\eta/4$. Let $n_0''$ and $M$ be given by Lemma~\ref{colored_regularity} with $m=1/\eps$. 

Let $n_0>\max\{n_0',n_0''\}$, where additionally the inequality~\eqref{tapsi0} holds for all $n \geq n_0$. Consider a graph $G=(V,E)$ with $n \geq n_0$ vertices. We want to bound the number of $3$-colorings of $G$ that do not have a rainbow triangle. 

Fix an arbitrary 3-edge-coloring of $G$ with no rainbow triangle, and let $V_1 \cup \cdots \cup V_t$ be an $\eps$-regular partition given by Lemma~\ref{colored_regularity} associated with this coloring. Let $R_1$, $R_2$ and $R_3$ be the cluster graphs (with density $\eta>0$) associated with each color on the vertex set $\{1,\ldots,t\}$, and let $R$ be the corresponding multicolored cluster graph. 

First we bound the number of $3$-edge colorings of $G$ that could give rise to this particular partition and these cluster graphs. The number of edges that lie within some class of the partition is bounded above by $t \binom{n/t}{2} \leq n^2/(2t) \leq \eps n^2/2<\eta n^2/8$, while the number of edges joining a pair of vertices in classes that are not regular with respect to some color is at most $3 \eps t^2 \binom{n/t}{2} < 3 \eta n^2/8$. There are also at most $3\eta/4 \cdot \binom{n}{2} \leq 3 \eta n^2/8$ edges that join a pair of classes in which their color has density smaller than $\eta/4$. This adds to at most $\eta n^2$ edges. There are at most $\binom{n^2}{\eta n^2}$ ways to choose this set of edges and they can be colored in at most $3^{\eta n^2}$ different ways.

For any pair $(i,j)$ with $i < j$, the remaining edges joining $V_i$ to $V_j$ may be colored in at most $s_{i,j}$ ways, where $s_{i,j}$ is the number of cluster graphs amongst $R_1$, $R_2$ and $R_3$ for which $\{i,j\}$ is an edge. Since $e(V_i,V_j) \leq \left(n/t\right)^2$, there are at most $s_{i,j}^{n^2/t^2}$ ways to color these edges. Let $E_s$ be the set of edges that appear in exactly $s$ of the cluster graphs and denote $e_s=|E_s|$. 

This discussion implies that the number of potential 3-edge colorings of $G$ that could give rise to this vertex partition and these cluster graphs is at most
\begin{eqnarray} \label{eq:numberOfClusterGraphs}
&&\binom{n^2}{\eta n^2} 3^{\eta n^2} (1^{e_1}2^{e_2}3^{e_3})^{n^2/t^2}.
\end{eqnarray}

Notice that, the above estimate works for any coloring pattern that we want to avoid, not only for the rainbow triangle. So, we shall use it again in the proof of Lemma~\ref{lemma_patterns}, which is about a different pattern.

The term $3^{\eta n^2}$ may be replaced by the upper bound $2^{2\eta n^2}$, while the quantity $\binom{n^2}{\eta n^2} 3^{\eta n^2} $ may be bounded above by $2^{(H(\eta)+2\eta)n^2}$ because of~\eqref{entropy}.

Next, for an upper bound on $1^{e_1}2^{e_2}3^{e_3}$, note that this value may be obtained from $R$ by giving weight $i$ to the edges in $E_i$ and multiplying the weights of the edges. Clearly, there cannot be a triangle formed by three edges of $E_3$, otherwise we could find a rainbow triangle in the multicolored cluster graph, which, by Lemma~\ref{lemma_embedding}, would lead to a rainbow triangle in the original coloring. Therefore, $1^{e_1}2^{e_2}3^{e_3} \le w(R)$, where $w$ is defined as in Definition~\ref{def:w}, from which we derive  $1^{e_1}2^{e_2}3^{e_3} \le 2^{\binom{t}{2}} \le 2^{t^2/2}$ by Lemma~\ref{lemma:GraphMaxW}. This implies that $(1^{e_1}2^{e_2}3^{e_3})^{n^2/t^2} \le 2^{n^2/2}$.

To conclude the proof of part (a), note that the total number of vertex partitions is bounded above by $M^n$, while, for a given partition, the number of distinct multicolored cluster graphs is at most $2^{3M^2/2}$. As a consequence, we have
\begin{eqnarray}\label{tapsi0}
c_{3,\widehat{F}_3}(G) &\leq& M^n \cdot 2^{3M^2/2} \cdot 2^{(2\eta+H(\eta))n^2} \cdot 2^{n^2/2} \nonumber\\
  &\leq& 2^{(1+\delta)n^2/2}
\end{eqnarray}
by our choice of $n$ and $\eta$.

To prove part (b), assume that $G$ is a graph with at most $(1-\xi)\binom{n}{2}$ edges and consider $\eta>0$ such that
$$164 \eta+62 H(\eta) < \xi.$$ 
The other constants are fixed in terms of $\eta$ as in part (a) and $n_0$ is sufficiently large so that~\eqref{tapsi1} and~\eqref{tapsi2} hold for all $n \geq n_0$..

We proceed as in part (a), that is, we obtain a multicolored cluster graph $R$ for each $3$-edge-coloring of $G$ with no rainbow triangle. Given such a graph $R$ on $t$ vertices (recall that $1/\eps \leq t \leq M$), we consider two cases, according to whether $\overline{e}(R)=\binom{t}{2}-e(R) > \left(30 \eta+10 H(\eta)\right)t^2$, or whether this is not the case. In the former case, Lemma~\ref{lemma:stabilityforw} implies that 
$$w(R) \leq 2^{\binom{t}{2}(1-0.16 \overline{e}(R))}<2^{\binom{t}{2}(1-6 \eta - 2 H(\eta))}.$$ 
As in (a), summing over all possible partitions and multicolored cluster graphs in this case, the number of good colorings of $G$ is at most 
\begin{eqnarray}\label{tapsi1}
&&M^n \cdot 2^{3M^2/2} \cdot 2^{(2\eta+H(\eta))n^2} \cdot 2^{(1-6 \eta - 2 H(\eta))n^2/2} \leq M^n \cdot 2^{3M^2/2} \cdot 2^{\binom{n}{2}-\eta n^2} \leq \frac{1}{4} \cdot 2^{\binom{n}{2}}.
\end{eqnarray}

Next consider colorings such that $\overline{e}(R) \leq \left(40 \eta+15 H(\eta)\right)t^2$. By the proof of Lemma~\ref{lemma:stabilityforw}, we have $e_3 \leq 2 \overline{e} \leq \left(80 \eta+30 H(\eta)\right)t^2$. Once again, summing over possible partitions and cluster graphs, we obtain the following upper bound on the number of feasible $3$-edge-colorings of $G$:
\begin{eqnarray}\label{tapsi2}
&&M^n \cdot 2^{3M^2/2} \cdot 2^{(2\eta+H(\eta))n^2} \cdot 3^{\left(80 \eta+30 H(\eta)\right)n^2/2} \cdot 2^{(1-\xi) \binom{n}{2}}\nonumber\\
&\leq& M^n \cdot 2^{3M^2/2} \cdot  2^{\left(164 \eta+62 H(\eta)\right)n^2/2} \cdot 2^{(1-\xi) \binom{n}{2}} \leq   M^n \cdot 2^{3M^2/2} \cdot  2^{ \binom{n}{2} - \eta n^2} \leq \frac{1}{4} \cdot 2^{\binom{n}{2}}
\end{eqnarray}

Combining equations~\eqref{tapsi1} and~\eqref{tapsi2}, we derive $c_{3,\widehat{F}_3}(G) < 2^{\binom{n}{2}}$, which proves part (b). 
\end{proof}

We conclude this section with the following conjecture.

\begin{conjecture}
The only extremal graph for $c_{3,\widehat{F}_3}(G)$ is the complete graph $K_n$. 
\end{conjecture}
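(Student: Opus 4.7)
The plan is to combine the structural results of Section~\ref{sec:generalresults} with the stability statement of Theorem~\ref{thm:approx_rainbow}, and then rule out any non-trivial multipartition. First I would invoke Theorem~\ref{thm:nonmonoExtremalsAreMultipartite}: the rainbow triangle $\widehat{F}_3$ is non-monochromatic and, having all three edges in singleton color classes, is distinct from both $T_0$ and $P_2$, so any $(3,\widehat{F}_3)$-extremal graph $G$ must be complete multipartite.

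Next, I would apply Theorem~\ref{thm:approx_rainbow}(b). Since any $3$-coloring of $K_n$ that uses at most two colors trivially avoids rainbow triangles, we have $c_{3,\widehat{F}_3}(K_n) \geq 2^{\binom{n}{2}}$, and hence every extremal $G$ satisfies $c_{3,\widehat{F}_3}(G) \geq 2^{\binom{n}{2}}$. Theorem~\ref{thm:approx_rainbow}(b) then gives $|E(G)| \geq \binom{n}{2} - \xi n^2$ for any fixed $\xi > 0$ and $n$ large. Combined with the first step, writing $G = K(V_1, \ldots, V_k)$ with $|V_i| = n_i$ forces $\sum_i \binom{n_i}{2} \leq \xi n^2$, so every part has size $O(\sqrt{\xi}\,n)$; in particular, any extremal graph is a complete multipartite graph missing only $o(n^2)$ edges, though it might still possess many small parts of size at least $2$.

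The third and main step is to rule out any part of size at least $2$. Assume for contradiction that $G$ is extremal and complete multipartite with some part $V_i$ containing two vertices $u,v$. The plan is to compare $c_{3,\widehat{F}_3}(G)$ with $c_{3,\widehat{F}_3}(G + uv)$. Setting $H = G - \{u,v\}$, for each good coloring $\widehat{G}$ of $G$ let $f(\widehat{G})$ count the colors $\sigma \in \{1,2,3\}$ such that extending by $uv \mapsto \sigma$ yields a good coloring of $G+uv$: a color $\sigma$ is forbidden iff some $w \notin V_i$ has $\{c(uw), c(vw)\} = \{1,2,3\}\setminus\{\sigma\}$. Then $c_{3,\widehat{F}_3}(G + uv) = \sum_{\widehat{G}} f(\widehat{G})$, and I would try to show this strictly exceeds $c_{3,\widehat{F}_3}(G) = \sum_{\widehat{G}} 1$, contradicting extremality of $G$. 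By Corollary~\ref{cor:ColorVectorsAreEqual} the vectors $\vec{u}_H$ and $\vec{v}_H$ coincide, and together with Gallai's structure theorem (every rainbow-triangle-free coloring of a complete graph admits a partition into blocks between which only two colors appear) this should imply that in most good colorings of $G$ one has $c(uw) = c(vw)$ for many $w$, so that $f(\widehat{G}) \geq 2$ is typical while $f(\widehat{G}) = 0$ is rare.

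The hard part will be making this quantitative: one must dominate the pathological colorings where the three pairs $\{1,2\}, \{1,3\}, \{2,3\}$ all appear as $\{c(uw), c(vw)\}$ for some $w$ (and hence $f=0$) by those with nearly identical $u,v$-profiles. A natural alternative route is to sharpen Theorem~\ref{thm:approx_rainbow}(b) into a strong stability statement of the form $c_{3,\widehat{F}_3}(G) \leq c_{3,\widehat{F}_3}(K_n) \cdot 2^{-\Omega(n)}$ whenever $G \subsetneq K_n$, via a finer Regularity / cluster-graph analysis refining Lemma~\ref{lemma:stabilityforw}, and then close the remaining gap by an explicit injective-map argument (in the spirit of Cases~1--4 of the proof of Theorem~\ref{thm:nonmonoExtremalsAreMultipartite}) that maps good colorings of $G$ to distinct good colorings of $K_n$, exhibiting one good coloring of $K_n$ missed by the map.
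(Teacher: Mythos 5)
This statement is labeled as a \emph{conjecture} in the paper; the authors do not prove it, and indeed they explicitly acknowledge the obstruction in the paragraph immediately following: ``Comparing the number of good colorings of an almost complete graph with the number of good colorings of the complete graph seems to be hard. We did not find, for example, a way to construct an injection from the colorings of $K_n-e$ (where $e$ is any edge) to those of $K_n$.'' So there is no ``paper's own proof'' to compare against, and no proposal should be accepted as settling the statement without genuinely new content.

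Your first two steps are sound and match what the paper already establishes. Theorem~\ref{thm:nonmonoExtremalsAreMultipartite} does apply to the rainbow $K_3$ (it falls under Case~1, not almost-monochromatic), so any $(3,\widehat{F}_3)$-extremal graph is complete multipartite; and the observation $c_{3,\widehat{F}_3}(K_n)\ge 2^{\binom{n}{2}}$ combined with Theorem~\ref{thm:approx_rainbow}(b) gives the approximate density statement and the bound $n_i = O(\sqrt{\xi}\,n)$ on part sizes. None of this is new relative to the paper, and neither conclusion pins down a single extremal graph.

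The third step is where the argument is a wish rather than a proof, and it is exactly the gap the authors themselves identify. To contradict extremality you need $c_{3,\widehat{F}_3}(G+uv) > c_{3,\widehat{F}_3}(G)$, i.e.\ $\sum_{\widehat{G}} f(\widehat{G}) > \sum_{\widehat{G}} 1$ with $f$ as you define it. The issue is that $\vec{u}_H = \vec{v}_H$ (Corollary~\ref{cor:ColorVectorsAreEqual}) is an equality of \emph{extension counts} coordinate-wise over colorings of $H$; it does not control the joint distribution of $(c(uw),c(vw))$ within a fixed good coloring of $G$, which is what governs $f(\widehat{G})$. Invoking Gallai's partition theorem is a promising direction, but as written it does not yield the needed quantitative domination of colorings with $f=0$ by colorings with $f\ge 2$: the Gallai blocks are coloring-dependent and there is no argument here tracking how $u,v,w$ distribute across blocks. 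Your own hedging (``I would try to show'', ``The hard part will be making this quantitative'', ``A natural alternative route is'') correctly flags that the decisive step has not been carried out; the sharpened stability estimate $c_{3,\widehat{F}_3}(G) \leq c_{3,\widehat{F}_3}(K_n)\cdot 2^{-\Omega(n)}$ for proper subgraphs is also unproved and would itself be a substantial new result. As it stands, the proposal is a reasonable research plan, not a proof of the conjecture.
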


Comparing the number of good colorings of an almost complete graph with the number of good colorings of the complete graph seems to be hard. We did not find, for example, a way to construct an injection from the colorings of $K_n-e$ (where $e$ is any edge) to those of $K_n$. On the other hand, finding better upper bounds for $c_{3,\widehat{F}_3}(K_n)$ could be a first step to later improve the general bound on Theorem \ref{thm:approx_rainbow}. In the light of this, we state the following theorem, which has a simple proof.

\begin{theorem}\label{thm:ncoloringsKn}
The number of $3$-edge colorings of $K_n$ avoiding rainbow triangles satisfies $$c_{3,\widehat{F}_3}(K_n) \le \frac{3}{2}(n-1)!\cdot 2^{\binom{n-1}{2}}.$$
\end{theorem}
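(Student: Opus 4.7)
The proof rests on Gallai's classical structure theorem for rainbow-triangle-free 3-edge-colorings of complete graphs: for $n\ge 2$, every such coloring of $K_n$ admits a vertex partition $V_1 \cup \cdots \cup V_k$ with $k \ge 2$, in which only two of the three colors appear on edges between distinct parts, and between any fixed pair of parts only a single color appears. My plan is to prove Theorem~\ref{thm:ncoloringsKn} by induction on $n$, writing $f(n)$ for $c_{3,\widehat{F}_3}(K_n)$ and absorbing small cases into the leading constant.

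For the inductive step, given a rainbow-triangle-free 3-coloring $c$ of $K_n$, I would apply Gallai's theorem and pass to the canonically chosen (e.g.\ finest) Gallai partition $V_1, \ldots, V_k$, which decomposes $c$ into the injective data
\[
(\{\alpha, \beta\},\ \text{a 2-coloring of the reduced } K_k,\ c|_{V_1}, \ldots, c|_{V_k}),
\]
where $\{\alpha, \beta\} \subseteq \{1,2,3\}$ is the unordered pair of the two colors used between parts, contributing the factor $\binom{3}{2} = 3$; the reduced $K_k$ admits at most $2^{\binom{k}{2}}$ such 2-colorings; and the restrictions $c|_{V_i}$ are themselves Gallai colorings of $K_{|V_i|}$, counted by $f(|V_i|)$. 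Combining these yields a recursive upper bound of the form
\[
f(n)\ \le\ 3\sum_{k \ge 2}\;\sum_{(n_1, \ldots, n_k) \vdash n}\; \frac{1}{\sigma(n_1, \ldots, n_k)}\binom{n}{n_1, \ldots, n_k} \cdot 2^{\binom{k}{2}} \cdot \prod_{i=1}^k f(n_i),
\]
where $\sigma$ is a symmetry factor accounting for equivalent orderings of equal-sized parts. Substituting the inductive hypothesis $f(m) \le \tfrac{3}{2}(m-1)! \cdot 2^{\binom{m-1}{2}}$ on the right-hand side reduces the theorem to a purely arithmetic inequality.

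The main obstacle is this last arithmetic comparison: different partition shapes contribute at markedly different rates, and one must identify the dominant term in the sum over compositions while verifying that the multiplicative constant $\tfrac{3}{2}$ is preserved by the recursion. A convexity-type argument comparing the contributions of balanced versus very unbalanced partitions, together with the structural refinement that the \emph{finest} Gallai partition forces the internal colorings $c|_{V_i}$ to admit no further Gallai subpartition (so the sizes $|V_i|$ obey structural constraints, ruling out the worst-case singleton peeling from dominating the sum), is what ultimately makes the induction telescope into the stated bound $\tfrac{3}{2}(n-1)! \cdot 2^{\binom{n-1}{2}}$.
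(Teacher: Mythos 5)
Your approach, via Gallai's structure theorem for rainbow-triangle-free colorings of complete graphs, is genuinely different from the paper's. The paper simply proves (Lemma~\ref{lemma:extendingKn}) that any rainbow-triangle-free $3$-coloring of $K_t$ extends to a new vertex in at most $t\cdot 2^t$ ways, and multiplies these bounds from $t=2$ to $n-1$; no structural theory of Gallai colorings is invoked at all.

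However, your sketch contains a genuine gap, which you yourself acknowledge by calling the arithmetic ``the main obstacle'' and describing only a hope for how it might resolve. The recursion you write down,
\[
f(n)\ \le\ 3\sum_{k\ge 2}\ \sum_{\text{set partitions of }[n]\text{ into }k\text{ parts}} 2^{\binom{k}{2}}\prod_{i} f(|V_i|),
\]
is far too lossy to close the induction: it pays the factor $3$ (for the pair of inter-part colors) at every level of recursion, and it counts each coloring once per Gallai partition it admits rather than once. Already at $n=4$, substituting the exact values $f(1)=1$, $f(2)=3$, $f(3)=21$ gives a right-hand side of $3\cdot(222+144+64)=1290$, whereas the target is $\tfrac{3}{2}\cdot 3!\cdot 2^{\binom{4}{2}}=576$. (Incidentally, the exponent $\binom{n-1}{2}$ printed in the theorem must be a typo for $\binom{n}{2}$: already $c_{3,\widehat{F}_3}(K_3)=27-6=21>6=\tfrac{3}{2}\cdot 2!\cdot 2^{\binom{2}{2}}$, and the paper's own final step evaluates $\prod_{t=2}^{n-1}t\cdot 2^t=(n-1)!\,2^{\binom{n}{2}-1}$.) Your remarks about convexity and about the ``finest'' Gallai partition are too vague to repair this overshoot; indeed, the constraint you propose --- that each $c|_{V_i}$ admit no further Gallai subpartition --- would, by Gallai's theorem applied to $K_{|V_i|}$, force every $|V_i|=1$ and hence say the original coloring uses only two colors, which is not a valid general reduction.
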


The above theorem is an easy consequence of the following lemma.

\begin{lemma} \label{lemma:extendingKn} Let $t\ge 2$ and consider the complete graph $K_{t+1}$ on vertices $v_1, \ldots, v_{t+1}$. Let $\hat{K}_t$ be any $3$-coloring of the edges induced by $v_1, \ldots, v_{t}$ which avoids a rainbow triangle. Then the number of ways to color the edges incident to $v_{t+1}$, still avoiding a rainbow triangle, is at most $t 2^t$. In other words, $c_{3,\widehat{F}_3}(v_{t+1}, \hat{K}_t) \le t 2^t$.
\end{lemma}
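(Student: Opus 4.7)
The plan is to set up compact notation and prove the bound by induction on $t$. Let $c_{ij}$ denote the color that $\hat{K}_t$ assigns to $v_iv_j$ and let $a_i\in\{1,2,3\}$ denote the color assigned to $v_{t+1}v_i$. The only triangles of $K_{t+1}$ that could be rainbow after the extension are those incident to $v_{t+1}$, namely $v_iv_jv_{t+1}$ for $1\le i<j\le t$, and such a triangle has colors $\{a_i,a_j,c_{ij}\}$. Thus the extension is rainbow-triangle-free if and only if $|\{a_i,a_j,c_{ij}\}|\le 2$ for every pair $i<j\le t$. Writing $N_t$ for the number of valid tuples $(a_1,\ldots,a_t)\in\{1,2,3\}^t$, the goal is to show $N_t\le t\cdot 2^t$.

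For the base case $t=2$, $\hat{K}_2$ is a single colored edge and a direct count of the nine ordered pairs $(a_1,a_2)$ excludes the two rainbow orderings, giving $N_2=7\le 2\cdot 2^2$. For the inductive step I would split a valid tuple into a prefix $(a_1,\ldots,a_{t-1})$ and the last entry $a_t$. The prefix is itself a valid configuration for the sub-instance $\hat{K}_t[\{v_1,\ldots,v_{t-1}\}]$, which is still rainbow-triangle-free, so by the induction hypothesis the number of admissible prefixes is at most $N_{t-1}\le (t-1)2^{t-1}$.

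The heart of the argument is counting how many values $a_t$ can take given a fixed prefix. The constraints read $|\{a_t,a_j,c_{jt}\}|\le 2$ for every $j<t$. If some $j<t$ satisfies $a_j\ne c_{jt}$, that single constraint forces $a_t\in\{a_j,c_{jt}\}$ and so at most two choices remain. Otherwise the prefix must equal the uniquely determined tuple $(c_{1t},c_{2t},\ldots,c_{(t-1)t})$, and then $a_t$ may take any of the three colors; this exceptional prefix is indeed a valid prefix, since for $i,j<t$ the triple $v_iv_jv_t$ is not rainbow in $\hat{K}_t$, which exactly says $|\{c_{it},c_{jt},c_{ij}\}|\le 2$. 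Combining these two cases, at most one prefix admits three extensions and every other admits at most two, so
\[ N_t \;\le\; 3 + 2\bigl(N_{t-1}-1\bigr) \;=\; 2N_{t-1}+1 \;\le\; 2(t-1)2^{t-1}+1 \;=\; (t-1)2^t+1 \;\le\; t\cdot 2^t, \]
which closes the induction.

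The main obstacle is recognizing that the prefix which would admit three extensions is uniquely determined by the colors $(c_{1t},\ldots,c_{(t-1)t})$; without this uniqueness, the crude bound on extensions would only yield $N_t\le 3N_{t-1}$, which is too weak to match $t\cdot 2^t$. Verifying that this exceptional tuple is itself a valid prefix is precisely where the rainbow-triangle-freeness of $\hat{K}_t$ enters the argument.
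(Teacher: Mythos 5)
Your proof is correct, and it takes a genuinely different route from the paper's. The paper fixes a vertex $u\in K_t$ and the color of the new edge $vu$ (say color~$1$), then splits $K_t-u$ into the sets $N^1,N^2,N^3$ according to the color of the edge to $u$: edges from $v$ into $N^2\cup N^3$ each have at most two admissible colors, while edges from $v$ into $N^1$ recurse into the same problem, handled with an auxiliary boundary function $f$. Your argument instead peels off a single vertex $v_t$, notes that the prefix $(a_1,\ldots,a_{t-1})$ must itself be an admissible extension of the sub-clique $K_{t-1}$, and observes that exactly one prefix (namely $(c_{1t},\ldots,c_{(t-1)t})$, which you correctly verify is admissible because $\hat{K}_t$ is rainbow-free) allows three choices for $a_t$, while every other prefix allows at most two. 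This gives the clean recurrence $N_t\le 2N_{t-1}+1$. Both arguments are single inductions, but yours is structurally simpler, and it is in fact sharper: unrolled, $N_t\le 2^{t+1}-1$, which is tight for the monochromatic $\hat{K}_t$ (any non-red-colored edges from $v_{t+1}$ must all share one of the two remaining colors, giving $1+2(2^t-1)=2^{t+1}-1$ extensions), whereas the paper's method only delivers $(t+2)2^{t-1}$.
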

\begin{proof} We prove this by induction on $t$. For $t=2$, it is easy to check that we have $c_{3,\widehat{F}_3}(v_{3}, \hat{K}_2) = 7 < 2\cdot 2^2$. Assume that $t > 2$ and that the claimed result holds for smaller complete graphs. Let $v = v_{t+1}$ and fix a coloring of $\hat{K}_t$ as in the statement of this lemma. Let $u$ be any vertex of $K_t$. Let $N^1$, $N^2$, and $N^3$ be the set of vertices in $\hat{K}_t-u$ which are adjacent to $u$ by an edge of color $1$, $2$, and $3$, respectively. Finally, let $n_i = |N^i|$, so that $n_1+n_2+n_3 = t-1$. We count the number of ways to color the edges from $v$ to $\hat{K}_t$ for each fixed color of the edge $vu$. First assume that $vu$ receives color $1$. Then all edges from $v$ to $N_2$ cannot receive color $3$, and all edges from $v$ to $N_3$ cannot receive color $2$. Therefore, there are $2^{n_2+n_3}$ ways to color the edges from $v$ to $N_2\cup N_3$. Define $f:\mathbb{N} \to \mathbb{N}$ such that $f(0) = f(1) = 1$ and $f(n) = 0$ for $n\ge 2$. We argue that the number of ways to color the edges from $v$ to $N_1$ is at most $n_12^{n_1}+f(n_1)$. In fact, this is trivial to check for $n_1 = 0$ and $n_1=1$. Finally, since $n_1 \le t-1$, for $n_1 \ge 2$ we can use induction: so we can color the edges from $v$ to $N_1$ in at most $n_1 2^{n_1} = n_1 2^{n_1} + f(n_1)$ ways. This gives a total of $(n_12^{n_1} +f(n_1))2^{n_2+n_3}$ ways to color the edges from $v$ to $N_1\cup N_2\cup N_3$ (given that $uv$ is of color 1). The cases in which $vu$ is of color $2$ or $3$ are analogous. Adding the values in each case and using that $f(n_i) \le 1$ and $t>2$, gives us
\begin{align*}
c_{3,\widehat{F}_3}(v, \hat{K}_t) &\le (n_1+n_2+n_3) 2^{n_1+n_2+n_3} + f(n_1) 2^{n_2+n_3} + f(n_2)2^{n_1+n_3}+f(n_3) 2^{n_1+n_2} \\
						&\le (t-1)2^{t-1} + 2^{t-1}+ 2^{t-1}+ 2^{t-1} \\
						&= (t+2)2^{t-1} \\
						&\le t2^t.
\end{align*}
\end{proof}

\begin{proof}[Proof of Theorem \ref{thm:ncoloringsKn}]Let $v_1, v_2, \ldots, v_n$ be any ordering of the vertices of $K_n$. Applying Lemma \ref{lemma:extendingKn} for $t \in \{2, \ldots, n-1\}$, we obtain
$$c_{3,\widehat{F}_3}(K_n) \le c_{3,\widehat{F}_3}(K_2)\left(\prod_{t=2}^{n-1}t2^t \right) = 3(n-1)!\cdot 2^{\binom{n-1}{2}-1}.$$
\end{proof}


\section{$3$-colorings avoiding patterns with two colors}
\label{sec:twocolors}

Given a graph $F$, the \emph{Ramsey number} $R(F,F)$ is the smallest number $\ell$ such that any edge-coloring of the complete graph $K_\ell$ with two colors contains a monochromatic copy of $F$. In this section, we prove Theorem~\ref{thm_patterns}, which we restate below.

\begingroup
\def\thetheorem{\ref{thm_patterns}}
\begin{theorem}
\theoremTwocolors
\end{theorem}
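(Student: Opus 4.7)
My plan is to follow the Regularity-Method-plus-stability paradigm established in Section~\ref{sec:rainbow} (see the proof of Theorem \ref{thm:approx_rainbow}), adapting it to the specific structure of two-color patterns under the Ramsey hypothesis. The lower bound is immediate: $T_{k-1}(n)$ is $K_k$-free, so every $3$-edge-coloring of it is automatically $\widehat{F}$-free, giving $c_{3,\widehat{F}}(T_{k-1}(n)) = 3^{\ex(n,K_k)}$. The main task is to prove the matching upper bound and to show it is attained only by the Tur\'an graph.

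Given a $\widehat{F}$-free $3$-edge-coloring of $G$, I apply Lemma \ref{colored_regularity} with suitable parameters to obtain a multicolored cluster graph $R$ on $t$ vertices with list assignments $L_e\subseteq \{1,2,3\}$. From the assumption that $G$ avoids $\widehat{F}$ and from Lemma \ref{lemma_embedding}, no list-colored copy of $\widehat{F}$ exists in $R$. A first baseline consequence is that, for every unordered pair $\{a,b\}\subseteq \{1,2,3\}$, the subgraph $R_{\{a,b\}}:=\{e\in E(R): L_e\supseteq \{a,b\}\}$ is $K_k$-free---otherwise coloring its $J$-edges with $a$ and its $\bar J$-edges with $b$ would recover $\widehat{F}$ via Lemma \ref{lemma_embedding}. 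The Ramsey hypothesis $R(J,J)\leq k$ then enters to strengthen this: by examining any $K_k$-subgraph of $R$ whose edges all have $|L_e|\geq 2$ and analyzing the distribution of forbidden colors on it, I plan to show that some choice of ordered pair of colors $(a,b)$ together with a completion of the free edges gives a two-coloring of this $K_k$ isomorphic to $\widehat{F}$. The Ramsey condition enters through the guarantee that any two-coloring of $K_k$ contains a monochromatic $J$; combined with the forced colors (one color per edge with $|L_e|=2$), a case analysis on the multiset of forbidden-color labels should produce the required embedding of $\widehat{F}$, contradicting $\widehat{F}$-freeness of $R$. This should yield the cluster-graph bound $3^{e_3}2^{e_2} \leq 3^{\ex(t,K_k)(1+o(1))}$, where $e_s$ counts edges of $R$ with $|L_e|=s$.

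Substituting into the counting estimate \eqref{eq:numberOfClusterGraphs} used in the proof of Theorem \ref{thm:approx_rainbow}, and summing the usual $M^n\cdot 2^{3M^2/2}$ factor for the choice of partition and cluster structure, plus the factor $2^{(2\eta+H(\eta))n^2}$ from exceptional edges, gives $c_{3,\widehat{F}}(G)\leq 3^{\ex(n,K_k)(1+o(1))}$. To pass from this approximate inequality to the exact statement, assume $c_{3,\widehat{F}}(G)\geq 3^{\ex(n,K_k)}$ and observe that tightness in the cluster-graph bound, together with the Erd\H{o}s--Simonovits stability theorem (Theorem \ref{simonovits_stability}) applied to the relevant $K_k$-free subgraph of $R$, forces $G$ to be within $o(n^2)$ edges of $T_{k-1}(n)$. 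A direct edge-exchange argument---showing that any intra-class edge or missing inter-class edge strictly reduces the count of $\widehat{F}$-free colorings---then forces $G=T_{k-1}(n)$ for $n$ sufficiently large.

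The principal obstacle is the Ramsey-based strengthening itself: bridging the gap between Ramsey's guarantee of a monochromatic copy of $J$ in any two-coloring and the desired embedding of $\widehat{F}$, which additionally requires the complementary edges to form a monochromatic $\bar J$. One subtlety is that the naive hope that the full multi-edge subgraph $R_{\geq 2}=\{e:|L_e|\geq 2\}$ is $K_k$-free can already fail for simple two-color patterns (small examples where a $K_k$ in $R_{\geq 2}$ has a balanced distribution of forbidden colors $|M_1|=|M_2|=|M_3|$ may still be $\widehat{F}$-free). The correct local condition on a $K_k$-subgraph of $R$ is therefore more delicate, and I expect the argument to require a careful case analysis over how the forbidden-color labels distribute on the $\binom{k}{2}$ edges, using pigeonhole to juggle the three color-pair options $\{1,2\},\{1,3\},\{2,3\}$ and the two orientations of $(J,\bar J)$, with Ramsey invoked inside each case to ensure that the forced partial coloring is compatible with at least one such orientation.
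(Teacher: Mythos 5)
Your overall architecture matches the paper's: lower bound from the Tur\'an graph, Regularity Lemma to produce a multicolored cluster graph, Ramsey hypothesis to bound the cluster-graph weight, stability via Erd\H{o}s--Simonovits, and then passing to exact extremality. But there are two points worth flagging, one where you over-worry and one where you under-worry.

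On the Ramsey step, your concern about $R_{\geq 2}$ not being $K_k$-free is real but the paper simply avoids it by choosing a better object. Rather than working with $R_{\geq 2}$, the paper fixes a color $j$ and considers $R_j' = R_j - E_1$, the edges of the cluster graph whose lists contain $j$ and at least one other color. If $e(R_j') > \ex(t,K_k)$ then $R_j'$ contains a $K_k$; every edge of this $K_k$ has $j$ in its list plus some other color, and choosing the other color on each edge defines a two-coloring of $K_k$ using the two colors other than $j$. By $R(J,J) \le k$ there is a monochromatic $J$; combining those edges with color $j$ on the complement produces a list-colored copy of $\widehat{F}$ in $R$, contradicting $\widehat{F}$-freeness via Lemma~\ref{lemma_embedding}. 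Thus $e(R_j') \le \ex(t,K_k)$ for each $j$, which gives $2e_2+3e_3 \le 3\ex(t,K_k)$ with no case analysis at all. Your sketched ``pigeonhole over forbidden-color labels'' is unnecessarily complicated.

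The real gap is your final paragraph. A ``direct edge-exchange argument'' does not suffice to pass from the approximate stability statement to the exact conclusion that $T_{k-1}(n)$ is the unique extremal graph, and you give no indication of how it would go. Stability only tells you $G$ is within $\delta n^2$ edges of the Tur\'an graph, and those $\delta n^2$ edges could in principle change the count of good colorings by a factor of $3^{\Theta(n^2)}$, so deleting a single bad edge and counting is not obviously a decreasing operation. The paper's Lemma~\ref{lemma_final} is the genuinely hard part: it runs an ABKS-style induction by contradiction, showing that a counterexample $G$ with $c(G) \ge 3^{t_{k-1}(n)+m}$ and $m>0$ yields a counterexample on $n-1$ vertices with strictly larger excess $m$, which eventually produces an absurdly over-colored graph on $\sqrt{n}$ vertices. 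The proof splits on whether some vertex has many intra-class neighbors (requiring the rare/abundant-color analysis and Lemma~\ref{lemma_alon}) or not (where Lemma~\ref{lemma:containsmultipartite} is applied to delete all intra-class edges), and both branches are several pages of careful estimates. As written, your proposal replaces this entire argument with an unsubstantiated sentence, so the proof is incomplete.
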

\addtocounter{theorem}{-1}
\endgroup

In particular, this works for patterns for which one of the classes contains all but at most $\lceil (k-1)/2 \rceil$ edges incident with one of its vertices. 

Our strategy to prove Theorem~\ref{thm_patterns} is to adapt the general steps of the proof of Theorem 1.1 in Alon, Balogh, Keevash and Sudakov~\cite{ABKS} (see also Theorem~1 in~\cite{balogh06}) to our context. This involves proving a stability result, which shows that any graph $G$ with a large number of feasible colorings is similar to $T_{k-1}(n)$, and then proving the desired result by contradiction: starting with a counterexample on $n$ vertices, one shows that it is possible to find a counterexample on $n-1$ vertices whose `gap' to the desired optimal solution increases. A recursive application of this step would lead to an $\sqrt{n}$-vertex graph whose number of $3$-edge colorings that avoid $\widehat{F}$ is too high to be feasible.

To implement this idea, we define our concept of stability.
\begin{definition}\label{colored_stability}
A pattern $\widehat{F}$ of $K_k$ which has at most 3 classes is said to satisfy the $3$-stability Property if, for every $\delta>0$, there exists $n_0$ as follows. If $n>n_0$ and $G$ is an $n$-vertex graph such that $c_{3,\widehat{F}}(G) \geq 3^{\ex(n,F)}$, then there exists a partition $V(G)=V_1 \cup \cdots \cup V_{k-1}$ such that $\sum_{i=1}^{k-1} e(V_i) \leq \delta n^2$.
\end{definition}

Note that if a pattern $\widehat{F}$ satisfies the $3$-stability Property,
then for any $\delta > 0$ and $n$ sufficiently large, it follows immediately that $c_{3,\widehat{F}}(n) \leq 3^{\ex(n,F)+ \delta n^2}$. To prove Theorem~\ref{thm_patterns}, we shall demonstrate two auxiliary lemmas. The first states that any pattern $\widehat{F}$ as in the statement of the theorem satisfies the $3$-stability Property, while the second states that the Tur\'{a}n graph $T_{k-1}(n)$ is the unique extremal graph for patterns of $K_k$ that satisfy the $3$-stability Property. 

\begin{lemma}\label{lemma_patterns}
Let $k \geq 3$ and let $\widehat{F}$ be a pattern of $K_k$ with two classes, one of which induces a graph $J$ such that $R(J,J) \leq k$. Then $\widehat{F}$ satisfies the $3$-stability Property.
\end{lemma}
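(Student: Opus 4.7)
The plan is to adapt the Alon, Balogh, Keevash and Sudakov stability framework (as in~\cite{ABKS} and the proof of Theorem~\ref{thm:approx_rainbow}) to our multicolored pattern setting. Given $G$ with $c_{3,\widehat{F}}(G) \geq 3^{\ex(n,K_k)}$, I would first apply the colored regularity lemma (Lemma~\ref{colored_regularity}) to a typical $\widehat{F}$-free $3$-edge-coloring of $G$ and pass to the multicolored cluster graph $R$ on $[t]$ with edge lists $L_{ij} \subseteq \{1,2,3\}$. Following the counting scheme used for Theorem~\ref{thm:approx_rainbow}(a) (essentially equation~\eqref{eq:numberOfClusterGraphs}), the number of $\widehat{F}$-free colorings compatible with these data is bounded, up to a lower-order factor, by $\prod_{ij \in E(R)} |L_{ij}|^{(n/t)^2}$, so it suffices to show that $\sum_{ij \in E(R)} \log_3 |L_{ij}| \leq e(T_{k-1}(t)) + o(t^2)$.

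The central structural observation is that, for every pair $\{a,b\} \subseteq \{1,2,3\}$, the subgraph $R^{ab} = \{ij \in E(R) : \{a,b\} \subseteq L_{ij}\}$ is $K_k$-free. Indeed, if $R^{ab}$ contained a $K_k$, one could pick any copy of $J$ inside it and color its edges with $a$ and the remaining edges of the $K_k$ with $b$; since every edge's list contains both $a$ and $b$, this embeds $\widehat{F}$ into the cluster graph and hence, by Lemma~\ref{lemma_embedding}, into $G$, a contradiction. The Ramsey hypothesis $R(J,J) \leq k$ would then be used to control the contribution of edges with list size exactly $2$: for such an edge, the single missing color gives a natural $2$-coloring of any potential $K_k$ subgraph of $R$, and $R(J,J) \leq k$ produces a monochromatic copy of $J$ that can be completed to $\widehat{F}$ using the third color. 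Combining Tur\'{a}n's theorem applied to each $R^{ab}$ with this Ramsey-based control on how size-$2$ lists can be distributed should give the target bound $\sum \log_3 |L_{ij}| \leq e(T_{k-1}(t)) + o(t^2)$.

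Combining this with the hypothesis $c_{3,\widehat{F}}(G) \geq 3^{\ex(n,K_k)}$ forces near-equality, so $R$ must be close to $T_{k-1}(t)$ with full lists on essentially every edge. Applying Erd\H{o}s-Simonovits stability (Theorem~\ref{simonovits_stability}) to (the underlying graph of) $R^{123} = \{ij : L_{ij} = \{1,2,3\}\}$ produces a partition $[t] = U_1 \cup \cdots \cup U_{k-1}$ with $\sum_i e(U_i) \leq \alpha t^2$, and lifting this partition to $V(G)$ by $V_i' = \bigcup_{j \in U_i} V_j$ and accounting for within-cluster edges, edges in irregular or sparse pairs, and cluster-pair edges within the same $U_i$ (all controlled by the regularity parameters and the constant $\alpha$) yields $\sum_i e(V_i') \leq \delta n^2$, as required. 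The hard part will be the precise quantitative use of the Ramsey hypothesis in the second paragraph: unlike in the monochromatic ABKS setting, a single $K_k$ subgraph of $R$ with all edges of list size at least $2$ need not by itself admit a realization of $\widehat{F}$ (small examples exist in which the three possible size-$2$ lists are distributed across a perfect matching of a $K_4$), so the Ramsey condition has to be used in a more global, counting-based fashion rather than edge-by-edge.
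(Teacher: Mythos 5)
Your regularity reduction and final stability-plus-lifting framework match the paper, but the central counting bound is not established, and the object you analyze --- $R^{ab} = \{ij : \{a,b\} \subseteq L_{ij}\}$ --- is quantitatively too weak. It is indeed $K_k$-free (and, as you note, this uses no Ramsey), but since a size-$2$-list edge with list $\{a,b\}$ lies only in $R^{ab}$ while a full-list edge lies in all three, the tally over pairs gives $\sum_{\{a,b\}} e(R^{ab}) = e_2 + 3e_3 \leq 3\ex(t,K_k)$, i.e.\ $e_3 + \tfrac{1}{3} e_2 \leq \ex(t,K_k)$. What the counting needs is $e_3 + (\log_3 2)\, e_2 \leq \ex(t,K_k) + o(t^2)$, and $\log_3 2 \approx 0.631 > \tfrac{1}{3}$, so when $e_2 = \Theta(t^2)$ this bound simply does not close. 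The step you describe as ``combining Tur\'{a}n applied to each $R^{ab}$ with Ramsey-based control on size-$2$ lists'' is precisely the gap, and no edge-by-edge repair of the $R^{ab}$ decomposition will produce the missing factor.

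The right object is $R'_j = R_j \setminus E_1$ for each color $j$: edges whose list contains $j$ and has size at least $2$. Each $R'_j$ is $K_k$-free, and this is where $R(J,J) \leq k$ is actually used, locally and per-$K_k$: a $K_k$ inside $R'_j$ has every edge carrying $j$ plus at least one of the other two colors, so choosing one of those other colors for each edge gives a $2$-coloring of $K_k$; by $R(J,J)\leq k$ there is a monochromatic copy of $J$ in some $\sigma \neq j$, and coloring the $J$-edges with $\sigma$ and the remaining edges with $j$ realizes $\widehat{F}$ in $R$, hence in $G$ by Lemma~\ref{lemma_embedding}. Since a size-$2$-list edge now contributes to two of the $R'_j$ and a full-list edge to all three, one gets $2e_2 + 3e_3 \leq 3\ex(t,K_k)$, i.e.\ $e_3 + \tfrac{2}{3} e_2 \leq \ex(t,K_k)$, and $\log_3 2 < \tfrac{2}{3}$ closes the gap. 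Your $K_4$ example with the three size-$2$ lists distributed over a perfect matching is consistent with this: each $R'_j$ restricted to those four vertices is a $4$-cycle, so none contains a $K_4$. The lesson is that your worry in the last paragraph is well placed (a $K_k$ whose edges merely have lists of size $\geq 2$ need not realize $\widehat{F}$), but the fix is to restrict to edges all sharing one fixed color in their list and still argue per $K_k$, not to replace the local argument by a global one.
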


\begin{lemma}\label{lemma_final}
Let $k \geq 3$ and let $\widehat{F}$ be a pattern of $K_k$ that satisfies the $3$-stability Property (in particular, it has at most 3 classes). Then the equality $c_{3,\widehat{F}}(G)=c_{3,\widehat{F}}(n)$ is achieved by an $n$-vertex graph $G$ if and only if $G$ is isomorphic to the Tur\'{a}n graph $T_{k-1}(n)$.
\end{lemma}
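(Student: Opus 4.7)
The ``if'' direction is immediate: since $T_{k-1}(n)$ is $K_k$-free, every $3$-edge coloring of $T_{k-1}(n)$ is vacuously $\widehat{F}$-free, giving $c_{3,\widehat{F}}(T_{k-1}(n)) = 3^{\ex(n,K_k)}$ and hence $c_{3,\widehat{F}}(n) \geq 3^{\ex(n,K_k)}$. For the ``only if'' direction, I would adapt the scheme of~\cite{ABKS} (see also~\cite{balogh06}) to the setting of color patterns, using the $3$-stability Property as the crucial input and a gap-amplification iteration as the engine.

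Suppose for contradiction that there is an $n$-vertex graph $G \not\simeq T_{k-1}(n)$ with $c_{3,\widehat{F}}(G) \geq 3^{\ex(n,K_k)}$ for some large $n$. Applying the $3$-stability Property with a sufficiently small $\delta>0$ produces a $(k-1)$-partition $V(G) = V_1 \cup \cdots \cup V_{k-1}$ with $\sum_{i} e(V_i) \leq \delta n^2$; I would pick one minimizing $\sum_i e(V_i)$. Because $G \not\simeq T_{k-1}(n)$, at least one of the following \emph{defects} occurs with respect to this partition: the part sizes differ by more than one, some $V_i$ contains an internal edge, or some cross pair $V_i \times V_j$ is missing an edge.

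The core of the proof is to show that such a defect yields a strict multiplicative improvement over the trivial estimate. For a vertex $v$ incident to a defect, I would combine $c_{3,\widehat{F}}(G) \leq 3^{d_G(v)} c_{3,\widehat{F}}(G-v)$ with a local structural observation: since $G-v$ is near-Tur\'{a}n, in most $\widehat{F}$-free colorings $\chi$ of $G-v$ there are many colored $K_{k-1}$-configurations in $N_G(v)$ which, together with the defect at $v$, leave at least one edge incident to $v$ with fewer than three admissible colors under $\chi$. Summing over $\chi$ yields $c_{3,\widehat{F}}(G) \leq 3^{d_G(v)} c_{3,\widehat{F}}(G-v) \cdot 3^{-\Omega(n)}$. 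Defining the gap $g(G) := \log_3 c_{3,\widehat{F}}(G) - \ex(n,K_k)$ and using that the minimum degree of $G$ is forced to be close to $\delta(T_{k-1}(n)) = \ex(n,K_k) - \ex(n-1,K_k)$ (in turn obtained by combining the trivial bound with the stability estimate applied to $G-v$ and the assumption $c_{3,\widehat{F}}(G) \geq 3^{\ex(n,K_k)}$), the previous inequality translates into $g(G-v) \geq g(G) + \Omega(n)$. Iterating this deletion produces a chain $G = G_0, G_1, G_2, \ldots$ with $g(G_{i+1}) \geq g(G_i) + \Omega(n_i)$, where $n_i = n - i$.

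The principal obstacle is to guarantee that the iteration can be carried out for $\Theta(n)$ steps. The plan is to argue that either (i)~the iteration continues down to $\lceil\sqrt n\rceil$ vertices, in which case the accumulated gap is $\Omega(n^2)$ and the bound $c_{3,\widehat{F}}(G_{n-\lceil\sqrt n\rceil}) \geq 3^{\ex(\lceil\sqrt n\rceil,K_k) + \Omega(n^2)}$ contradicts the trivial estimate $3^{\binom{\lceil\sqrt n\rceil}{2}} = 3^{O(n)}$; or (ii)~at some step $G_i \simeq T_{k-1}(n_i)$, which forces $g(G_i) = 0$ and contradicts $g(G_i) \geq g(G_0) + \sum_{j<i}\Omega(n_j) > 0$. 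The technical crux is therefore the uniform bookkeeping required to reapply the stability lemma and the defect-vertex argument at every scale: as each deletion perturbs the optimal partition and the stability parameters could degrade along the chain, one must verify that a near-Tur\'{a}n $(k-1)$-partition with a defect continues to exist for every $G_i$ with $n_i \geq \lceil\sqrt n\rceil$, which is the delicate point of the proof.
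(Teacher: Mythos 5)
Your overall strategy---stability plus gap-amplification via iterated vertex deletion---matches the paper, but your sketch is vague exactly where the work happens, and one step is internally inconsistent. To obtain the claimed savings $3^{-\Omega(n)}$ from deleting a single vertex $v$ you would need $\Omega(n)$ of the edges at $v$ to be constrained, yet your local observation only produces ``at least one edge incident to $v$ with fewer than three admissible colors,'' which yields at most a constant factor $2/3$, not $3^{-\Omega(n)}$. The paper earns its per-step gain quite differently: after disposing of the low-minimum-degree case by deleting a vertex of minimum degree, it picks a vertex $v$ with at least $n/(10^3k)$ internal neighbors, splits the $\widehat F$-free colorings into $\mathcal{C}_1$ and $\mathcal{C}_2$ according to whether all colors between $v$ and the parts are ``abundant,'' shows $|\mathcal{C}_1|$ is negligible by combining the bipartite embedding result (Lemma~\ref{lemma_alon}) with an entropy estimate, and then counts the edges at $v$ in $\mathcal{C}_2$ via a ``weak class'' analysis that produces the exponential saving. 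None of this mechanism appears in your outline.

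You also have no analogue of the paper's third case: when \emph{every} vertex has few internal neighbors, there is no profitable vertex to delete at all---instead the paper invokes the Edge Deletion Lemma (Lemma~\ref{lemma:containsmultipartite}) to strip all internal edges without losing a single coloring, obtaining a $(k-1)$-partite graph with at most $t_{k-1}(n)$ edges and hence at most $3^{t_{k-1}(n)}$ colorings, a direct contradiction with $m>0$. Your trichotomy of ``defects'' (unbalanced parts, an internal edge, a missing cross edge) does not cleanly single out a vertex whose deletion amplifies the gap in this regime. Finally, the concern you flag as ``the delicate point''---stability parameters degrading along the chain---is not the real difficulty: the $3$-stability Property is simply reapplied afresh at each scale $n_i\ge n_0$ with the same fixed $\delta$, and the bookkeeping stays clean by tracking the invariant $c_{3,\widehat F}(G_i)\ge 3^{t_{k-1}(n_i)+m+i}$, which gains only $+1$ per step but still forces a contradiction at scale $n_0$ once $n>n_0^2$.
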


As we have seen, the rainbow pattern of $K_3$ is a pattern that does not satisfy the $3$-stability Property, so that Lemma~\ref{lemma_final} does not apply in this case. 
 
 \subsection{Proof of Lemma~\ref{lemma_patterns}}

In this section, we shall prove Lemma~\ref{lemma_patterns}. To this end, let $k \geq 3$ and consider a pattern $\widehat{F}$ of $K_k$ as in the statement of the lemma. Fix $\delta>0$, which we may assume to satisfy $\delta<1$. Let $\beta>0$ and $m_1$ be given by Theorem~\ref{simonovits_stability} with $\alpha=\delta^2/(2^{10}(k-1)^4)$. 

With foresight, consider a parameter $\eta>0$ satisfying the following inequality:
\begin{eqnarray}
&&22H(\eta)+44 \eta <\min\left\{\beta,\frac{\delta^2}{2^{10}(k-1)^4}\right\} \label{eta1}
\end{eqnarray}
Note that $\alpha$ and $\eta$ are bounded above by $\delta/16$. Let $\eps>0$ and $n_1$ be given by Lemma~\ref{lemma_embedding}, and assume that $\eps<\eta/4$. Consider $n_2$ and $M$ given by Lemma~\ref{colored_regularity} with $m=\max\{1/\eps,m_1\}$.

Let $n_0 \geq \max\{n_1,n_2\}$ such that~\eqref{UB3} is satisfied for all $n \geq n_0$. For $n \geq n_0$, let $G=(V,E)$ be an $n$-vertex graph such that $c_{3,\widehat{F}}(G) \geq 3^{\ex(n,F)}$ and fix an arbitrary 3-edge-coloring of $G$ that avoids $\widehat{F}$. Consider the partition $V_1 \cup \cdots \cup V_t$ associated with this coloring given by Lemma~\ref{colored_regularity} with $m=1/\eps$. Let $R_1$, $R_2$ and $R_3$ be the cluster graphs (with minimum density $\eta/4$) associated with each of the three colors, and let $R$ be the corresponding multicolored cluster graph.

Exactly as in Theorem~\ref{thm:approx_rainbow}, defining $E_s$ as the set of edges that appear in exactly $s$ of the cluster graphs and denote $e_s=|E_s|$, we bound the number of $3$-edge colorings of $G$ that could give rise to this particular partition and cluster graphs (see equation \eqref{eq:numberOfClusterGraphs} and the observation after it): 

\begin{eqnarray}
&&\binom{n^2}{\eta n^2} 3^{\eta n^2} (1^{e_1}2^{e_2}3^{e_3})^{n^2/t^2}.\label{UB1}
\end{eqnarray}

To find an upper bound on $(1^{e_1}2^{e_2}3^{e_3})^{n^2/t^2}$, we define $R_j'=R_j-E_1$, so that  $2e_2+3e_3=e(R'_1)+e(R'_2)+e(R'_3)$. Suppose for a contradiction that $e(R_j') > t_{k-1}(t)$, so that there is a monochromatic copy of $K_k$. For the sake of the argument, assume that it is green. Because the edges of this copy of $K_k$ are not in $E_1$, it is possible to assign color red or blue to each edge $e$ so that the edge $e$ is an edge in the corresponding cluster graph $R_j'$. Since $R(J,J)\leq k$, there is a monochromatic copy of $J$ in blue or red in this copy of $K_k$. Combined with green edges, we generate a copy of $K_k$ with the forbidden pattern, which, because of Lemma~\ref{lemma_embedding}, contradicts the fact that the original coloring did not contain a copy of $\widehat{F}$.

Hence $2e_2+3e_3 \leq 3\ex(n,K_k)$ and we obtain $\frac{e_2}{t^2} \leq \frac{3(k-2)}{4(k-1)}-\frac{3e_3}{2t^2}$. Since $2<3^{7/11}$, and using the bound in~\eqref{entropy},the upper bound~\eqref{UB1} becomes at most
\begin{eqnarray*}
&&2^{H(\eta)n^2} 3^{\eta n^2} (1^{e_1}2^{e_2}3^{e_3})^{n^2/t^2} \nonumber\\
&<&3^{(H(\eta)+\eta)n^2+\left(\frac{21(k-2)}{22(k-1)}+\frac{e_3}{11 t^2}\right)\frac{n^2}{2}}. 
\end{eqnarray*}
We have
\begin{eqnarray}\label{UB2}
c_{3,\widehat{F}}(G) &\leq& \sum_{R} 3^{(H(\eta)+\eta)n^2+\left(\frac{21(k-2)}{22(k-1)}+\frac{e_3}{11 t^2}\right)\frac{n^2}{2}},
\end{eqnarray}
where the sum is over multicolored cluster graphs $R$ defined by triples $(R_1,R_2,R_3)$.

First assume that $e_3 < \left(\frac{k-2}{k-1}-88 \eta -44H(\eta)\right)\frac{t^2}{2}$ for \emph{all} such $R$. The number of vertex partitions is clearly bounded above by $M^n$, while the number of possible choices for $R_1$, $R_2$ and $R_3$ is at most $2^{3t^2/2} \leq 2^{3M^2/2}$. Equation~\eqref{UB2} leads to
\begin{eqnarray}\label{UB3}
c_{3,\widehat{F}}(G) &\leq& M^n \cdot 2^{3M^2/2} \cdot 3^{\left((k-2)/2(k-1) - \eta \right)n^2} < 3^{t_{k-1}(n)},
\end{eqnarray}
for $n$ sufficiently large, a contradiction.
 
In particular there must be a multicolored cluster graph $R=(R_1,R_2,R_3)$ for which $e_3 \geq \left(\frac{k-2}{k-1}-88 \eta -44H(\eta)\right)\frac{t^2}{2}$, where $V_1 \cup \cdots \cup V_t$ is the corresponding $\eps$-regular partition. By our choice of $\beta$ and $m$, Theorem~\ref{simonovits_stability} ensures that there is a partition $V(R)=W_1 \cup \cdots \cup W_{k-1}$ of the vertex set of $R'$ with $\sum e(W_i)<\alpha t^2$. Let $B$ be the $(k-1)$-partite subgraph of $R$ induced by the classes $W_1,\ldots,W_{k-1}$, so that 
\begin{eqnarray*}
e(B) &\geq& \left(\frac{k-2}{k-1}-88 \eta -44H(\eta)-2\alpha\right)\frac{t^2}{2}.
\end{eqnarray*}

The following lemma implies that the size of each $W_i$ is not far from $t/(k-1)$. A proof of this fact may be found in~\cite{HLO15}.

 \begin{lemma} \label{prop:prop1}
Let $H=(W,E)$ be a $(k-1)$-partite graph on $t$ vertices with $k$-partition $W= W_1 \cup \cdots 
\cup W_{k-1}$. If, for some $m \geq (k-1)^2$, the graph $H$ contains at least $\ex (t, K_{k}) - m$
edges, then for $i \in \{1, \ldots , k-1\}$ we have
$$
\left||W_i| - \frac{t}{k-1} \right| \leq \sqrt{\frac{2(k-2)}{k-1} \cdot m + 2(k-2)} < \sqrt{2m}.
$$
\end{lemma}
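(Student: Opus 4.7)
The plan is a short, self-contained computation using only the formula for the edge count of a complete multipartite graph, the exact value of the Turán number, and one application of Cauchy--Schwarz. I would parameterise the deviations from the balanced partition and convert the global edge deficit into a bound on the sum of squares of the deviations, then turn the sum bound into an individual bound using that the deviations sum to zero.

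First I would set $a_i = |W_i| - t/(k-1)$ for $i\in\{1,\dots,k-1\}$, so that $\sum_{i=1}^{k-1} a_i = 0$. Since $H$ is a $(k-1)$-partite graph on parts $W_1,\dots,W_{k-1}$, it has at most the number of edges of the \emph{complete} $(k-1)$-partite graph with these parts, namely $\binom{t}{2} - \sum_i \binom{|W_i|}{2}$. Using $\sum_i |W_i| = t$ and $\sum_i a_i = 0$, a direct expansion gives $\sum_i |W_i|^2 = \frac{t^2}{k-1} + \sum_i a_i^2$, hence
\[
e(H) \;\leq\; \binom{t}{2} - \frac{1}{2}\left(\sum_i |W_i|^2 - t\right) \;=\; \frac{1}{2}\left(\frac{(k-2)t^2}{k-1} - \sum_i a_i^2\right).
\]

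Next, the Turán graph $T_{k-1}(t)$ has part sizes differing by at most $1$; writing $t=q(k-1)+r$ with $0\le r<k-1$, the same identity applied to the balanced partition gives $\ex(t,K_k) = \frac{1}{2}\bigl(\frac{(k-2)t^2}{k-1} - \frac{r(k-1-r)}{k-1}\bigr) \ge \frac{(k-2)t^2}{2(k-1)} - \frac{k-1}{8}$, using $r(k-1-r) \le (k-1)^2/4$. Combining with the hypothesis $e(H)\ge \ex(t,K_k)-m$ and the upper bound above yields
\[
\sum_{i=1}^{k-1} a_i^2 \;\leq\; 2m + \tfrac{k-1}{4}.
\]

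To pass from the sum to individual bounds, I would apply Cauchy--Schwarz to $a_i = -\sum_{j\ne i} a_j$, obtaining $a_i^2 \le (k-2)\sum_{j\ne i} a_j^2$, and hence $\sum_j a_j^2 \ge \tfrac{k-1}{k-2}\, a_i^2$. Combining with the previous display,
\[
a_i^2 \;\leq\; \tfrac{k-2}{k-1}\left(2m + \tfrac{k-1}{4}\right) \;=\; \tfrac{2(k-2)}{k-1}\, m + \tfrac{k-2}{4} \;\leq\; \tfrac{2(k-2)}{k-1}\, m + 2(k-2),
\]
which is the first claimed inequality. For the second inequality, note that $\tfrac{2(k-2)}{k-1} m + 2(k-2) < 2m$ is equivalent to $m > (k-1)(k-2)$, which is guaranteed by the assumption $m\ge (k-1)^2$ since $(k-1)^2 > (k-1)(k-2)$ for $k\ge 2$.

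There is no real obstacle here; the entire argument is a chain of elementary manipulations. The only point worth flagging is that the stated bound is slightly weaker than what the proof actually gives (the constant $\tfrac{k-2}{4}$ could replace $2(k-2)$), which leaves comfortable slack and makes the $\sqrt{2m}$ comparison trivial under the hypothesis $m\ge (k-1)^2$.
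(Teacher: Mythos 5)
Your proof is correct in every step: the identity $\sum_i |W_i|^2 = t^2/(k-1) + \sum_i a_i^2$ and the bound $e(H) \le \binom{t}{2} - \sum_i\binom{|W_i|}{2}$ give the right upper bound, the exact formula $\ex(t,K_k) = \frac{1}{2}\bigl(\frac{(k-2)t^2}{k-1} - \frac{r(k-1-r)}{k-1}\bigr)$ with $r(k-1-r) \le (k-1)^2/4$ gives the lower bound, and the Cauchy--Schwarz step $a_i^2 \le (k-2)\sum_{j\neq i} a_j^2$ correctly converts the $\ell_2$ bound into a pointwise one; the final comparison with $\sqrt{2m}$ under $m\ge(k-1)^2 > (k-1)(k-2)$ is also right. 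Note that the paper itself does not give a proof here but simply defers to~\cite{HLO15}; your argument is the natural one and, as far as the paper's presentation is concerned, plays exactly the role of the omitted proof.
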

Applying this to the graph $B$, we deduce that $|W_i-t/(k-1)| \leq \gamma t$ for all $i \in \{1,\ldots,t\}$, where $\gamma= \sqrt{88 \eta + 44H(\eta)+2 \alpha}$. Our choice of $\eta$ and $\alpha$ implies that $\gamma \leq \frac{\delta}{16(k-1)^2}$.
Clearly, each edge removed from the Tur\'{a}n graph $T_{k-1}(t)$ to produce $B$ eliminates at most $(t/(k-1)+\gamma t)^{k-3}$ copies of $K_{k-1}$, so that $B$ contains at least
\begin{eqnarray}\label{eq_B}
&&\left( \frac{t}{k-1} \right)^{k-1}-(44 \eta +22H( \eta)+\alpha)t^2 \cdot \left( \frac{t}{k-1}+\gamma t\right)^{k-3}\nonumber\\
&\geq&  \frac{t^{k-1}}{(k-1)^{k-3}} \cdot \left((k-1)^2-\gamma \right).
\end{eqnarray} 
copies of $K_{k-1}$. For the last inequality, we used that $44 \eta +22H( \eta)+\alpha < \gamma^2$ and, since $\gamma(k-1)<1$, we have
\begin{eqnarray*}
&&\left( \frac{t}{k-1}+\gamma t\right)^{k-3}=t^{k-3} \gamma^{k-3} \sum_{i=0}^{k-3} \left(\frac{1}{\gamma (k-1)}\right)^i \leq t^{k-3} \gamma^{k-3} \frac{1}{(k-1)^{k-2}\gamma^{k-2}}\\ 
&=& \left(\frac{t}{k-1}\right)^{k-3} \frac{1}{(k-1) \gamma} < \left(\frac{t}{k-1}\right)^{k-3} \frac{1}{\gamma}.
\end{eqnarray*}
Let $U_1 \cup \cdots \cup U_{k-1}$ be the partition of $V(G)$ given by $U_i=\cup_{j \in W_i} V_j$. We argue that the number of edges in $\cup_{i=1}^{k-1} G[U_i]$ is small. First note that:
\begin{itemize}
\item[(i)] the number of edges that come from a pair of classes $(V_j,V_{j'})$ such that $\{j,j'\} \in E(R')=E_3$ is at most $\alpha t^2 \cdot (n^2/t^2)=\alpha n^2$;

\item[(ii)] the number of edges that come from a pair of classes $(V_j,V_{j'})$ such that $\{j,j'\} \notin E_1 \cup E_2 \cup E_3$ because the pair is not $\eps$-regular for at least one of the colors is at most $3 \eps t^2 (n/t)^2=3 \eps n^2$;

\item[(iii)] the number of edges that come from a pair of classes $(V_j,V_{j'})$ such that $\{j,j'\} \notin E_1 \cup E_2 \cup E_3$ because the pair is sparse for all colors is at most $3 \eta n^2$;

\item[(iv)] the number of edges with both endpoints in a same set $V_j$ is bounded above by $t (n/t)^2=n^2/t\leq \eps n^2$.
\end{itemize}

It remains to bound the number of edges in pairs $(V_j,V_{j'})$ such that $j,j' \in W_i$ for some $i \in \{1,\ldots,k-1\}$, with the additional properties that  $\{j,j'\} \in E_1 \cup E_2$ and $(V_j,V_{j'})$ is $\eps$-regular for all colors. Let  $(V_j,V_{j'})$ and assume that $i=k-1$. 

\begin{claim}
There are no sets $V_{j_1},\ldots,V_{j_{k-2}}$, where $j_\ell \in W_\ell$ for all $\ell \in \{1,\ldots,k-2\}$, such that both $\{j_1,\ldots,j_{k-2},j\}$ and $\{j_1,\ldots,j_{k-2},j'\}$ induce copies of $K_{k-1}$ in $R'$.
\end{claim}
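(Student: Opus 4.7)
The plan is to prove the claim by contradiction via Lemma~\ref{lemma_embedding}: the existence of such sets $V_{j_1},\dots,V_{j_{k-2}}$ would force the multicolored cluster graph $R$ to contain the pattern $\widehat{F}$, which would then yield a copy of $\widehat{F}$ in $G$, contradicting the fact that the fixed 3-edge-coloring of $G$ is $\widehat{F}$-free.

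First, I would assume such sets exist and analyze the $K_k$ they induce in $R$ on the vertex set $X = \{j_1,\dots,j_{k-2},j,j'\}$. Every edge among these vertices other than $\{j,j'\}$ belongs to one of the two $K_{k-1}$'s of $R'$ and therefore lies in $E_3$; by the definition of $E_3$, such an edge carries the full list $\{1,2,3\}$. The remaining edge $\{j,j'\}$ lies in $E_1 \cup E_2$ by the standing hypothesis on the pair $(V_j, V_{j'})$, so its list $L_{\{j,j'\}}$ is a nonempty subset of $\{1,2,3\}$. Fix any color $c^\ast \in L_{\{j,j'\}}$.

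Next, I would use the fact that $\widehat{F}$ has two nonempty classes, say $A$ and $B$, together with the transitivity of the natural $S_k$-action on $E(K_k)$, to choose a vertex bijection $\phi\colon V(F_k) \to X$ with $\phi^{-1}(\{j,j'\})$ an edge of class $A$. Assigning color $c^\ast$ to every edge of $R[X]$ whose $\phi$-preimage lies in $A$, and any color $c^{\ast\ast} \ne c^\ast$ to every edge whose $\phi$-preimage lies in $B$, produces a coloring of this $K_k$ whose color partition is isomorphic to $\widehat{F}$. All list constraints are satisfied: the edges other than $\{j,j'\}$ carry the full list, and $\{j,j'\}$ receives the color $c^\ast \in L_{\{j,j'\}}$ precisely because it was placed in the $A$-class. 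Hence $R$ contains $\widehat{F}$ in the sense required by Lemma~\ref{lemma_embedding}, and the desired contradiction follows.

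The main subtlety I would expect is conceptual rather than technical: one has to recognize that the Ramsey hypothesis $R(J,J) \le k$ is \emph{not} what does the work here (it was already used earlier in the proof to force $R'$ to be $K_k$-free and to enable the stability application to $R'$). The claim itself is driven by two elementary observations, namely that $\widehat{F}$ has two nonempty classes and that the single non-$E_3$ edge $\{j,j'\}$ still retains a nonempty list; the edge-transitivity of $\mathrm{Aut}(K_k)$ then provides all the flexibility needed to place $\{j,j'\}$ in whichever class of $\widehat{F}$ accommodates its color list.
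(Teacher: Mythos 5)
Your proof is correct and follows essentially the same route as the paper: the paper's proof fixes a color $\sigma$ with $\{j,j'\}\in E(R_\sigma)$, notes that $\{j_1,\ldots,j_{k-2},j,j'\}$ induces $K_k$ in $R_\sigma$ and $K_k-\{j,j'\}$ in the other two cluster graphs, asserts that this ``clearly leads to'' a copy of $K_k$ colored according to $\widehat{F}$ in the multicolored cluster graph, and invokes Lemma~\ref{lemma_embedding}. Your write-up usefully unpacks that ``clearly'' step (edge-transitivity of $\mathrm{Aut}(K_k)$ plus the fact that $\widehat{F}$ has two nonempty classes), and your observation that the Ramsey hypothesis $R(J,J)\le k$ does no work in this particular claim is accurate.
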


\begin{proof} 
Assume for a contradiction that there are such sets and let $\sigma$ be a color for which $\{j,j'\} \in E(R_\sigma)$. This implies that $\{j_1,\ldots,j_{k-2},j,j'\}$ induces a copy of $K_k$ in $R_\sigma$ and a copy of $K_k-\{j,j'\}$ in the cluster graphs corresponding to the other colors. This clearly leads to a copy of $K_k$ colored according to $\widehat{F}$ (where $\sigma$ is one of the colors) in the multicolored cluster graph. Lemma~\ref{lemma_embedding} leads to the desired contradiction. 
\end{proof}

To conclude the proof, we find an upper bound on the number $N$ of pairs $j,j'$ in a same set $W_i$ for which there are no sets $V_{j_1},\ldots,V_{j_{k-2}}$, one in each of the remaining classes $W_{\ell}$, such that both $\{j_1 \ldots,j_{k-2},j\}$ and $\{j_1,\ldots,j_{k-2},j'\}$ induce copies of $K_{k-1}$ in $R$. 

Clearly, any vertex $s \in W_i$ could lie in at most $(t/(k-1)+\gamma t)^{k-2}$ copies of $K_{k-1}$ with one vertex in each set $W_{\ell}$, so that, to avoid the occurrence of the above sets $V_{j_1},\ldots,V_{j_{k-2}}$, at least one of $j,j'$ lies in at most 
$$m=\frac{1}{2} \left(\frac{t}{k-1}+\gamma t\right)^{k-2}$$
copies of $K_{k-1}$ in $R$ by the pigeonhole principle. Let $A$ be the number of elements $s \in [t]$ which lie in at most $m$ such copies of $K_{k-1}$. Clearly $N \leq (k-1) \cdot \left(\frac{t}{k-1} + \gamma t \right) \cdot A \leq  2 \cdot t \cdot A$. To find an upper bound on $A$, consider the auxiliary bipartite graph $B'$ whose bipartition is given by $[t]=V(B)$ and by the set $K^B_{k-1}$ of copies of $K_{k-1}$ in $B$. We add an edge $\{s,K\}$ whenever vertex $s$ lies in the clique $K$. Clearly, $A$ is the number of elements $s \in [t]$ with degree at most $m$ in $B'$. The number of edges in $B'$ is $(k-1)\left| K^B_{k-1} \right|$. By~\eqref{eq_B} we have 
$$\frac{A}{2} \left(\frac{t}{k-1}+\gamma t\right)^{k-2} + (t-A)  \left(\frac{t}{k-1}+\gamma t\right)^{k-2} \geq e(B') \geq \frac{t^{k-1}}{(k-1)^{k-4}} \cdot \left((k-1)^2-\gamma \right).$$
This leads to
$$A \leq 2(k-1) \gamma t+2\gamma(k-1)^2t<4\gamma(k-1)^2t.$$
In particular, the number of edges in $G$ with endpoints in sets $V_j,V_{j'}$ that are contained in the same $W_i$ with the additional property that $j$ or $j'$ lie in at most $m$ copies of $K_{k-1}$ in $R$ with one vertex in each set $W_{\ell}$ is bounded above by 
$$N \left(\frac{n}{t}\right)^2 \leq 8 \gamma (k-1)^2 t^2 (n/t)^2 = 8 \gamma (k-1)^2 n^2.$$ 

Putting everything together, we conclude that the number of edges in $G$ with endpoints in a same $U_i$ is at most
\begin{eqnarray*}
&&\alpha n^2+3 \eps n^2+3\eta n^2+\eps n^2+8 \gamma (k-1)^2 n^2\\
&\leq& \alpha n^2 + 4 \eta n^2 + \delta n^2/2,
\end{eqnarray*}
which is smaller than $\delta n^2$ by the definition of $\gamma$ and our choice of $\eta$, $\eps$ and $\alpha$.

\subsection{Proof of Lemma~\ref{lemma_final}}

The aim of this section is to prove Lemma~\ref{lemma_final}, which states that the Tur\'{a}n graph $T_{k-1}(n)$ is extremal for any pattern $\widehat{F}$ of $K_k$ that satisfies the $3$-stability Property. We shall use the following result from~\cite{ABKS}.
\begin{lemma}\cite{ABKS}\label{lemma_alon}
Let $G$ be a graph and $W_1,\ldots,W_k$ be subsets of vertices of $G$ such that, for every pair $i \neq j$ and every pair of subsets $X_i \subset W_i$ and $X_j \subset W_j$ with $|X_i| \geq 10^{-k} |W_i|$ and $|X_j| \geq 10^{-k} |W_j|$, there are at least $|X_i||X_j|/10$ edges between $X_i$ and $X_j$ in $G$. Then $G$ contains a copy of $K_k$ with one vertex in each set $W_i$.
\end{lemma}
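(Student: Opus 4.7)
The plan is to build the desired $K_k$ greedily, picking $v_i \in W_i$ one at a time while maintaining, for each $j > i$, a candidate set $X_j^{(i)} \subseteq W_j$ of common neighbors of $v_1, \ldots, v_i$. Initialize $X_j^{(0)} = W_j$; at step $i$, choose $v_i \in X_i^{(i-1)}$ and set $X_j^{(i)} = N(v_i) \cap X_j^{(i-1)}$ for each $j > i$. The invariant I will maintain is
\[
|X_j^{(i)}| \;\geq\; |W_j|/10^{i}.
\]
Once we reach step $k$, any $v_k \in X_k^{(k-1)}$ completes a $K_k$ with one vertex in each $W_i$, since $v_j \in X_j^{(i)} \subseteq N(v_i)$ for all $i < j$.

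The heart of the argument is: at iteration $i$, find $v_i \in X_i^{(i-1)}$ with $|N(v_i) \cap X_j^{(i-1)}| \geq |X_j^{(i-1)}|/10$ for every $j > i$. For each such $j$, consider the bad set
\[
B_j^{(i)} \;=\; \bigl\{\, v \in X_i^{(i-1)} \;:\; |N(v) \cap X_j^{(i-1)}| < |X_j^{(i-1)}|/10 \,\bigr\}.
\]
The invariant gives $|X_j^{(i-1)}| \geq 10^{-k}|W_j|$ throughout $i \leq k-1$. If additionally $|B_j^{(i)}| \geq 10^{-k}|W_i|$, then by the lemma's hypothesis the number of edges between $B_j^{(i)}$ and $X_j^{(i-1)}$ is at least $|B_j^{(i)}||X_j^{(i-1)}|/10$; on the other hand the definition of $B_j^{(i)}$ forces this edge count to be \emph{strictly less than} $|B_j^{(i)}||X_j^{(i-1)}|/10$, a contradiction. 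Hence $|B_j^{(i)}| < 10^{-k}|W_i|$. A union bound then gives $\bigl|\bigcup_{j>i} B_j^{(i)}\bigr| < (k-i)\cdot 10^{-k}|W_i|$, which is easily dominated by $|X_i^{(i-1)}| \geq |W_i|/10^{i-1}$ because $10^{k-i+1} > k - i$ for every $i \in \{1,\ldots,k-1\}$. So a good $v_i$ exists, and setting $X_j^{(i)} = N(v_i) \cap X_j^{(i-1)}$ preserves the invariant.

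The main subtlety is the interplay between the density threshold $1/10$ and the relative-size threshold $10^{-k}$ appearing in the hypothesis. Each iteration shrinks the candidate sets by a factor of $10$, and the hypothesis can be invoked only while they stay above $10^{-k}|W_j|$. The constants in the statement are tuned precisely so that the invariant $|X_j^{(i)}| \geq |W_j|/10^i$ survives all $k-1$ greedy steps and the cumulative bad set remains strictly smaller than the surviving candidate set at each stage; a weaker density or a smaller relative threshold would destroy this balance. (One should also implicitly assume that the $|W_i|$ are at least $10^{k-1}$ so that the final set $X_k^{(k-1)}$ is genuinely nonempty rather than merely satisfying a fractional positive lower bound.)
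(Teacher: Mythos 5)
Your greedy embedding argument is correct, and it is essentially the standard proof of this lemma (the paper itself gives no proof, quoting the result from [ABKS], where it is established by exactly this iterative refinement with the bad-set/counting contradiction you use). Your parenthetical worry that one must assume $|W_i| \geq 10^{k-1}$ is unnecessary: since $|X_k^{(k-1)}|$ is an integer bounded below by the positive quantity $|W_k|/10^{k-1}$, it is automatically at least $1$ whenever $W_k$ is nonempty.
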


Now, we can give a proof for Lemma~\ref{lemma_final}.

\begin{proof}[Proof of Lemma~\ref{lemma_final}] Our proof is inspired by the proof of Theorem 1.1 in~\cite{ABKS}, which may be slightly shortened because of Lemma~\ref{lemma:containsmultipartite}. Let $k \geq 3$ and let $\widehat{F}$ be a pattern of $K_k$ satisfying the $3$-stability property (see Definition~\ref{colored_stability}). For fixed $\delta>0$, which will be chosen conveniently later, let $n_0$ be given as in the definition of $3$-stability. 

Suppose that $G=(V,E)$ is a graph on $n > n_0^2$ vertices with at least $3^{t_{k-1}(n)+m}$ distinct $3$-edge colorings that avoid $\widehat{F}$, for some $m \geq 0$. We claim that, if we assume that $m>0$, then $G$ contains a vertex $v$ such that $G-v$ has at least $3^{t_{k-1}(n-1)+m+1}$ distinct $3$-edge colorings that avoid $\widehat{F}$. Repeating this argument, we obtain a graph on $n_0$ vertices with at least $3^{t_{k-1}(n_0)+m+n-n_0}>3^{n_0^2}$ such 3-colorings. This is a contradiction, as a graph on $n_0$ vertices has at most $n_0^2/2$ edges, and hence at most $3^{n_0^2/2}$ distinct $3$-edge colorings.

To implement this idea, suppose that $m>0$. The above claim holds easily if $\delta(G)<\delta_{k-1}(n)$, where $\delta_{k-1}(n)$ denotes the minimum degree of $T_{k-1}(n)$, as in this case we would be able to delete a vertex $v$ of minimum degree and the number of colorings of $G-v$ that avoid $\widehat{F}$ would be at least
$$3^{-\delta(G)}3^{t_{k-1}(n)+m} \geq 3^{t_{k-1}(n)-\delta_{k-1}(n)+m+1}= 3^{t_{k-1}(n-1)+m+1}.$$

Thus we assume that $\delta(G) \geq \delta_{k-1}(n)$. Let $V_1 \cup \cdots \cup V_{k-1}$ be a partition of $V$ that minimizes $\sum_i e(V_i)$, so that it satisfies $\sum_i e(V_i)<\delta n^2$ by the choice of $\delta$ and $n_0$ in terms of the $3$-stability property. If we fix $\delta=10^{-11k}$, because of Lemma~\ref{prop:prop1} we can easily claim that 
$||V_i|-n/(k-1)| < \sqrt{2 \cdot 10^{-11k} n^2} < \sqrt{2/10^k \cdot 10^{-10k} n^2} < \sqrt{2/10^k} \cdot 10^{-5k}n < 10^{-5k}n$.

First assume that $G$ contains a vertex $v$ with at least $n/(10^3k)$ neighbors within its own class. The minimality of $\sum_i e(V_i)$ implies that $v$ has at least this many neighbors in each of the classes $V_j$ (otherwise we would move $v$ to a different class). Given a feasible coloring of $G$, we say that a color $\sigma$ is \emph{rare} with respect to $v$ and $V_i$ if it appears at most $n/(10^{3+k})$ times in edges between $v$ and $V_i$, otherwise it is called \emph{abundant}.  A class $V_i$ is said to be \emph{$s$-weak} if there are $s$ rare colors with respect to $v$ and $V_i$. More generally, a class is said to be \emph{weak} if it is either $1$- or $2$-weak. Because $v$ has a large number of neighbors in each class, note that, for every $i$, at least one of the colors is abundant with respect to $v$ and $V_i$, and hence no class is $3$-weak (remember that we only use tree colors). We split the set $\mathcal{C}$ of 3-colorings of $G$ that avoid $\widehat{F}$ into classes $\mathcal{C}_1 \cup \mathcal{C}_2$, where $\mathcal{C}_1$ contains colorings that we now describe. There is a choice of colors $\sigma_1,\ldots,\sigma_{k-1}$ satisfying the following two properties: (i) the number of occurrences of each color coincides with the distribution of colors in the neighborhood of some vertex $x$ of $K_{k}$ in some coloring according to pattern $\widehat{F}$; (ii) for every $i \in \{1,\ldots,k-1\}$, color $\sigma_i$ is abundant with respect to $v$ and $V_i$. In other words, a coloring lies in $\mathcal{C}_1$ if it allows a partial embedding of $\widehat{F}$ into $G$ where $x$ is mapped to $v$ is one of the vertices, the neighbors of $x$ are mapped to distinct classes $V_i$, and the colors between $x$ and its neighbors are all abundant with respect to $v$ and the respective $V_i$.

We observe that, if a coloring lies in $\mathcal{C}_2$, either there are three or more weak classes or there are exactly two weak classes and at least one of the classes is $2$-weak. Indeed, if there are only two weak classes $V_{i}$ and $V_j$ and both are $1$-weak, we may clearly choose abundant colors $\sigma_i$ and $\sigma_j$ with respect to $V_i$ and $V_j$, respectively, regardless of whether we want them to be different or the same. Since all colors are abundant for any remaining class, we can always extend this to a partial embedding that respects the pattern $\widehat{F}$. Observe, however, that this is not necessarily the case when three classes are weak, as we can avoid monochromatic neighborhoods (by assigning distinct sets of colors to the weak classes) or a pattern of $K_4$ where each edge is incident with three colors (by assigning the same set of colors to all weak classes). We may also avoid monochromatic patterns if there are two weak classes, and one of them is $2$-weak (as we may assign disjoint sets of colors to the weak classes).

We may deal with colorings in $\mathcal{C}_1$ as follows: let $\Delta$ be such a coloring, and let $\sigma_i$ and $W_i \subset V_i \cap N(v)$ be the colors and sets satisfying the definition, for $i \in \{1,\ldots,k-1\}$. If, given arbitrary sets $X_j \subset  W_j$ and $X_{j'}\subset  W_{j'}$ with $|X_{j}| \geq 10^{-{k+1}}|W_{j}|$ and $|X_{j'}| \geq 10^{-{k+1}}|W_{j'}|$, the number of edges of each color between $X_j$ and $X_{j'}$ is at least $|X_j| |X_{j'}|/10$ for all distinct $j,j'$, then we would find a forbidden copy of $K_k$ colored according to $
\widehat{F}$ by Lemma~\ref{lemma_alon}, a contradiction. Thus there are distinct $j,j' \in [k-1]$ and two reasonably large sets $X_j \subset V_j$ and $X_{j'} \subset V_{j'}$ (size at least $n/(10^{2k+2})$ by the definition of abundant color) such that are at most $|X_j| |X_{j'}|/10$ edges of some color $\sigma$ joining $X_j$ and $X_{j'}$. 

Because of this, to obtain an upper bound on $|\mathcal{C}_1|$, we may proceed as follows. There are at most $2^{2n}$ ways to choose $X_j$ and $X_{j'}$ (this is a rough upper bound that uses the fact that the vertex set of the graph has $2^n$ possible subsets.) Moreover, once these sets are chosen, the edges between them may be colored in at most
\begin{eqnarray}\label{boundXj}
&&\binom{|X_j| |X_{j'}|}{|X_j| |X_{j'}|/10} 2^{|X_j| |X_{j'}|} 3^{|E(G)|-|E(X_j,X_{j'})|}\\
&& \leq 2^{H(0.1) |X_j| |X_{j'}|}  2^{|X_j| |X_{j'}|} 3^{|E(G)|-|E(X_j,X_{j'})|} \leq 2^{3/2 |X_j| |X_{j'}|} 3^{|E(G)|-|E(X_j,X_{j'})|}\nonumber
\end{eqnarray} 
ways. By the fact that there are at most $t_{k-1}(n)+10^{-11k}n^2-|X_j| |X_{j'}|$ other edges in the graph, the number of colorings in $\mathcal{C}_1$ is bounded above by
\begin{eqnarray*}
&& 2^{2n}  \left(\frac{\sqrt{8}}{3}\right)^{ |X_j| |X_{j'}|} 3^{t_{k-1}(n)+10^{-11k}n^2} \\
&&\leq
\left( 2^{2n}  \left(\frac{\sqrt{8}}{3}\right)^{n^2/(10^{4k+4})} 3^{10^{-11k}n^2} \right)  3^{t_{k-1}(n)} \leq \left( 2^{2n}  3^{(10^{-11k}-10^{-4k-6})n^2} \right)  3^{t_{k-1}(n)}, 
\end{eqnarray*}
which is much smaller than $3^{t_{k-1}(n)}$ for sufficiently large $n$ because $\sqrt{8}/3 \leq 3^{-0.01}$. 

Since $|\mathcal{C}| \geq 3^{t_{k-1}(n)+m}$ by hypothesis, this bound on $|\mathcal{C}_1|$ implies that $|\mathcal{C}_2| \geq 3^{t_{k-1}(n)+m-1}$. By our previous discussion, there are two possibilities. Firstly, there may be three weak classes $V_{j_1}$, $V_{j_2}$ and $V_{j_3}$ (this may only happen for $k\geq 4$). Secondly, there may be two weak classes $V_{j_1}$ and $V_{j_2}$, where one of them, say $V_{j_1}$, is $2$-weak. 

Suppose that we are in the first case. The number of ways of choosing the classes $V_{j_1}$, $V_{j_2}$ and $V_{j_3}$ and coloring the edges between $v$ and $V_{j_1} \cup V_{j_2} \cup V_{j_3}$ is bounded above by
\begin{eqnarray*}
&&(3k)^3 \binom{|V_{j_1}|}{n/10^{3k}} \binom{|V_{j_2}|}{n/10^{3k}} \binom{|V_{j_3}|}{n/10^{3k}}2^{|V_{j_1}|+|V_{j_2}|+|V_{j_3}|} \\
&&\leq (3k)^3  \binom{(1/(k-1)+10^{-5k})n}{n/10^{3k}}^32^{(3/(k-1)+3\cdot 10^{-5k})n}\\
&& \leq 2^{(3 \cdot H(0.001)/(k-1)+3/(k-1)+3\cdot 10^{-5k})n} \leq  2^{(3.06/(k-1)+3 \cdot 10^{-5k})n}, 
\end{eqnarray*}
for large $n$, since $H(0.001)<0.02$. Moreover, $v$ is adjacent with at most $((k-4)/(k-1)+3 \cdot 10^{-5k})n$ vertices outside $V_{j_1} \cup V_{j_2} \cup V_{j_3}$, and hence the edges between $v$ and the remainder of the graph may be colored in at most $3^{((k-4)/(k-1)+3\cdot 10^{-5k})n}$ ways. 
Therefore the number of ways of coloring the edges incident with $v$ is at most,
\begin{eqnarray*}
&& 2^{(3.06/(k-1)+3 \cdot 10^{-5k})n} 3^{((k-4)/(k-1)+3\cdot 10^{-5k})n}.
\end{eqnarray*}

In the second case, we proceed similarly. The sets $V_{j_1}$ and $V_{j_2}$ may be chosen in at most $k^2$ ways, and the edges between $v$ and $V_1 \cup V_2$ may be colored in at most
\begin{eqnarray*}
&&\binom{|V_{j_1}|}{n/10^{3k}}^2 \binom{|V_{j_2}|}{n/10^{3k}} 2^{|V_{j_2}|} \leq \binom{(1/(k-1)+10^{-5k})n}{n/10^{3k}}^32^{(1/(k-1)+10^{-5k})n}\\
&& \leq 2^{(3 \cdot H(0.001)/(k-1)+1/(k-1)+ 10^{-5k})n} \leq  2^{(1.06/(k-1)+3 \cdot 10^{-5k})n}. 
\end{eqnarray*}
The remaining edges between $v$ and the other classes may be colored in at most $3^{((k-3)/(k-1)+2\cdot 10^{-5k})n}$ ways.

If $k \geq 4$, we have 
\begin{eqnarray*}
&&2^{(3.06/(k-1)+3 \cdot 10^{-5k})n} 3^{((k-4)/(k-1)+3\cdot 10^{-5k})n}+2^{(1.06/(k-1)+3 \cdot 10^{-5k})n}3^{((k-3)/(k-1)+2\cdot 10^{-5k})n}\\
&\leq & 2 \cdot 2^{(3.06/(k-1)+3 \cdot 10^{-5k})n} 3^{((k-4)/(k-1)+3\cdot 10^{-5k})n}, 
\end{eqnarray*}
and hence the number of colorings of $G-v$ is at least
\begin{eqnarray*}
&& 3^{t_{k-1}(n)+m-1}  \cdot 2^{-(3.06/(k-1)+4 \cdot 10^{-5k})n} 3^{-((k-4)/(k-1)+3\cdot 10^{-5k})n}\\
&\geq& 3^{t_{k-1}(n)-(k-4)/(k-1)n+m-1}  2^{-3.06n/(k-1)} 6^{-4 \cdot 10^{-5k}n} \\
&\geq& 3^{t_{k-1}(n-1)+m-1} 3^{2/(k-1)n}  2^{-3.06n/(k-1)} 6^{-4 \cdot 10^{-5k}n}\\
&\geq& 3^{t_{k-1}(n-1)+m-1} 3^{1.95/(k-1)n}  2^{-3.06n/(k-1)} \geq 3^{t_{k-1}(n-1)+m+1},
\end{eqnarray*}
because $2^{3.06}<3^{1.95}$, as required in this case.

If $k=3$, the number of colorings of $G-v$ is at least
\begin{eqnarray*}
&& 3^{t_{k-1}(n)+m-1}  \cdot 2^{-(1.06/(k-1)+3 \cdot 10^{-5k})n} 3^{-((k-3)/(k-1)+2\cdot 10^{-5k})n}\\
&\geq& 3^{t_{k-1}(n-1)+m-1} 3^{1/(k-1)n}  2^{-1.06n/(k-1)} 6^{-3 \cdot 10^{-5k}n}\\
&\geq& 3^{t_{k-1}(n-1)+m-1} 3^{0.95/(k-1)n}  2^{-1.06n/(k-1)} \geq 3^{t_{k-1}(n-1)+m+1},
\end{eqnarray*}
because $2^{1.06}<3^{0.95}$.

Finally, we consider the case where each vertex has fewer than $n/(10^3k)$ neighbors within their own class. Clearly, $n/(10^3k) < (|V_i|-2)/2$ for every $1\le i \le k-1$. Therefore, for each edge $vw$ with both ends inside some class $V_i$, there exists a vertex $u \in V_i$ which is not adjacent to both $v$ and $w$. By Lemma \ref{lemma:containsmultipartite} (applied to each such $vw$), we may delete every edge inside any part $V_i$, obtaining a graph $G^*$ which has at least as many coloring as $G$ and is $(k-1)$-partite graph. Therefore, $|E(G^*)| \le t_{k-1}(n)$. So the number of $3$-colorings avoiding $\widehat{F}$ of $G^*$, and so also of $G$, is at most $3^{t_{k-1}(n)}$. This contradicts the fact that $m > 0$.

\end{proof}

\section{Appendix}
Here we give a proof for Lemma~\ref{lemma:GraphMaxW}, which holds for graphs on any number of vertices.

\begin{proof}[Proof of Lemma~\ref{lemma:GraphMaxW}]
Fix some natural number $n$. Let $G$ be a graph on $n$ vertices which maximizes $w(G)$, where $w$ is given by Definition \ref{def:w}. For $v \in V(G)$ define $w(v)$ as the product of the weights of the edges incident with $v$. Clearly, $w(G) = \left(\prod_{v\in V(G)} w(v)\right)^{1/2}$. We will use an argument similar to the Zykov Symmetrization's proof of Tur\'{a}n's Theorem. 

First, we show that, for any two non-adjacent vertices $u, v$, we must have $w(u) = w(v)$. Suppose, for a contradiction, that there are non-adjacent $u, v \in V(G)$ such that $w(v) > w(u)$. We create a new graph $G^*$ from $G$ by deleting $u$ and cloning $v$, that is, adding a new vertex $v'$ adjacent to the same neighbors as $v$. Note that, when we delete $u$, the weight of the remaining edges may only increase. When we add $v'$, the weight of all edges in $G \setminus\{u\}$ will stay the same in $G^*$, since any edge of $G \setminus\{u\}$ belongs to a triangle in $G \setminus\{u\}$ if and only if it belongs to a triangle in $G^*$. Furthermore, in $G^*$ we have $w(v') = w(v)$. Therefore, we have that $w(G^*) > w(G)$, contradicting the fact that $w(G)$ is maximum.

If all the edges of $G$ have weight 2, then the result follows trivially. So, assume that $G$ has an edge of weight 3, and let $x$ be one of the endpoints of such an edge. We will prove that all vertices must have the same weight. Let $N(x)$ be the set vertices adjacent to $x$ and $\bar{N}(x)$ be set of vertices non-adjacent to $x$. Moreover, for $i = 2, 3$, let $N_i(x) = \{u \in V(G) : xu \in E(G) \text{ and } w(xu)=i\}$. Note that $N_3(x)$ is non-empty (while $\bar{N}(x)$ and $N_2(x)$ may be empty). Let $y \in N_3(x)$. Notice that, since edges of weight 3 do not belong to any triangle, there can be no edges inside $N_3(x)$ or from $N_3(x)$ to $N_2(x)$ (so vertices in $N_3(x)$ are isolated in $G[N(x)]$). By the previous discussion, we have that every vertex in $\bar{N}(x)$ must have weight equal to $w(x)$ and every vertex in $N(x)$ must have weight $w(y)$. Suppose, for a contradiction, that $w(x) \neq w(y)$. If there were two non-adjacent vertices $a, b$ with $a \in \{x\}\cup \bar{N}(x)$ and $b \in N(x)$, then we would have $w(a) = w(b)$ which implies that $w(x)= w(y)$. Therefore, the graph $G$ must contain all edges between $\{x\}\cup \bar{N}(x)$ and $N(x)$. We claim that, in this case, $\{x\}\cup \bar{N}(x)$ and $N(x)$ must be independent sets. To prove this, let $a \in \bar{N}(x)$. Notice that, since the weight of a vertex is a number of the form $2^p3^q$, by the unique factorization in primes, the fact that $w(a) = w(x)$ implies that $|N_i(a)| = |N_i(x)|$ for $i= 2, 3$. In particular, $|N(a)| = |N(x)|$. And since $a$ is adjacent to all elements in $N(x)$, it follows that $a$ cannot be adjacent to any element in $\{x\}\cup \bar{N}(x)$. Therefore, $\{x\}\cup \bar{N}(x)$ is independent. Similarly, since $y$ is isolated in $N(x)$ and adjacent to all vertices in $\{x\}\cup \bar{N}(x)$, and for any $b \in N(x)$ we have $w(b) = w(y)$, it follows that $b$ cannot have any neighbors in $N(x)$. Therefore, $N(x)$ is independent. It follows that $G$ is a complete bipartite graph. But this implies that $w(G) \le 3^{\ex(n, K_3)} < 2^{\binom{n}{2}}=w(K_n)$, which contradicts the fact that $w(G)$ is maximum.

Now we know that all vertices have the same weight $w(x)$. This implies that there are natural numbers $d_2, d_3, \bar{d}$ such that for all $v \in V(G)$, we have $|N_2(v)| = d_2$, $|N_3(v)| = d_3$ and $|\bar{N}(v)| = \bar{d}$. Now, as before, if it happens that $d_3 \neq 0$, we take $x$ and $y$ such that $y \in N_3(x)$ and note that all neighbors of $y$ belong to $\bar{N}(x)\cup\{x\}$. This implies that $d_2+d_3 \le \bar{d}+1$ and, since $d_2+d_3+\bar{d} = n-1$, we have $d_2+d_3 \le n/2$. Therefore, $w(G) =  \left(\prod_{v\in V(G)} w(v)\right)^{1/2} = \left((2^{d_2}3^{d_3})^n\right)^{1/2} \le 3^{n^2/4} < 2^{\binom{n}{2}}$.
\end{proof}


\end{document}